\newtheoremstyle{newremark}
  {5pt}
  {5pt}
  {\rmfamily}
  {}
  {\rmfamily\bf}
  {.}
  {.5em}
  {}
\newtheorem{theorem}{Theorem}
\newtheorem{lemma}[theorem]{Lemma}
\newtheorem{corollary}[theorem]{Corollary}
\newtheorem{proposition}[theorem]{Proposition}
\theoremstyle{newremark}
\newtheorem{remark}[theorem]{Remarque}
\newtheorem*{definition*}{Définition} 
\newtheorem*{notations*}{Notations}
\numberwithin{theorem}{section}
\numberwithin{equation}{section}
\newcommand{\N}{\mathbb{N}}
\newcommand{\R}{\mathbb{R}}
\renewcommand{\bar}[1]{\overline{#1}}
\newcommand{\de}{{\rm d}}
\newcommand{\LL}{\mathop{\hbox{\vrule height 6pt width .5pt depth 0pt
\vrule height .5pt width 3pt depth 0pt}}\nolimits}
\title{\bf  Gamma-convergence of nonlocal energies\\ for partitions}
\author{Thomas Gabard \& Vincent Millot \thanks{thomas.gabard@u-pec.fr ; vincent.millot@u-pec.fr}}
\affil{ \small LAMA, Universit\'e Paris Est Cr\'eteil, Universit\'e Gustave Eiffel, UPEM, CNRS,  Cr\'eteil, France}
\date{ }
\begin{document}

\maketitle


\section{Introduction}

For a measurable set $E\subset \R^n$, $n\geq 2$, an open set $\Omega\subset\R^n$, and a parameter $s\in(0,1/2)$, the fractional $2s$-perimeter $P_{2s}(E,\Omega)$ of $E$ in $\Omega$ is defined to be the integral quantity
\begin{multline*}
P_{2s}(E,\Omega):=\iint_{(E\cap\Omega)\times(E^c\cap\Omega)}\frac{\de x\de y}{|x-y|^{n+2s}} +\iint_{(E\cap\Omega)\times(E^c\cap\Omega^c)}\frac{\de x\de y}{|x-y|^{n+2s}}\\
+\iint_{(E\cap\Omega^c)\times(E^c\cap\Omega)}\frac{\de x\de y}{|x-y|^{n+2s}} \,,
\end{multline*}
where $A^c$ denotes the complement of a set $A\subset\R^n$. 

The $2s$-perimeter functional were introduced in the famous by now article \cite{CRS}. Their main motivation probably comes from \cite{CafSou} where the Bence-Merriman-Osher (BMO) scheme \cite{BMO} is implemented with the fractional heat equation involving  the fractional Laplacian of exponent $s$ (instead of the usual Laplacian). The standard BMO scheme is a well known algorithm to approximate 
motion of boundaries by mean curvature, as proved in \cite{BaGe,Eva}. For the fractional heat equation, it is showed in \cite{CafSou} that the BMO scheme converges to motion by nonlocal mean curvature, a motion which corresponds (at least formally) to a gradient flow of the functional~$P_{2s}$.  

The main question addressed in \cite{CRS} is the regularity of the boundary of a local minimizer $E$ of $P_{2s}(\cdot,\Omega)$. The notion of local minimizer means that 
$P_{2s}(E,\Omega)\leq P_{2s}(F,\Omega)$ for any set $F\subset\R^n$ such that $E\triangle F\Subset\Omega$ (here $\triangle$ denotes the symmetric difference, and $\Subset$ means compact inclusion). It is proved in  \cite{CRS} that $\partial E\cap \Omega$ is locally of class $C^{1,\alpha}$ away from some relatively closed singular subset of Hausdorff dimension at most $n-2$. The partial $C^{1,\alpha}$-regularity has then been upgraded to $C^\infty$ in  \cite{BFV}. Concerning improvements on the size of the singular set, the only general result is in \cite{SV} where it is proved that its Hausdorff dimension is at most $n-3$ (whence full regularity in dimension 2). Compare to the regularity theory for local minimizers of the standard perimeter functional (that we recall below), there is an important gap in dimensions. More precisely, it is well known for the standard perimeter that the singular set of any local minimizer has Hausdorff dimension at most $n-8$ (which is optimal). Observing that $P_{2s}(E,\Omega)$ is essentially the fractional $W^{2s,1}$-seminorm $[\cdot]_{W^{2s,1}(\Omega)}$ of the characteristic function~$\chi_E$ in $\Omega$, we have 
$(1-2s)[\chi_E]_{W^{2s,1}(\Omega)}\sim |D\chi_E|(\Omega)={\rm Per}(E,\Omega)$ as $s\to 1/2^-$ by \cite{Dav} (see also \cite{BouBreMir}), where ${\rm Per}(E,\Omega)$ denotes the (distributional) perimeter of $E$ in $\Omega$, i.e., 
$${\rm Per}(E,\Omega):=\sup\Big\{\int_E{\rm div}\,\varphi\,\de x : \varphi\in C^1_c(\Omega;\R^n)\,,\;\|\varphi\|_{L^\infty(\Omega)}\leq 1\Big\}\,. $$
A set $E$ satisfying ${\rm Per}(E,\Omega)<\infty$ is said to be of finite perimeter in $\Omega$, and in this case,
$${\rm Per}(E,\Omega)=\mathcal{H}^{n-1}(\partial^*E\cap\Omega) \,,$$
where  $\mathcal{H}^{n-1}$ is the $(n-1)$-dimensional Hausdorff measure, and $\partial^*E$ is the reduced boundary of $E$ (a measure theoretic boundary which coincides with the topological boundary if $\partial E$ is smooth enough, see \cite{AFP,Maggi}). A variational convergence of $(1-2s)P_{2s}(\cdot,\Omega)$ toward ${\rm Per}(\cdot,\Omega)$ as $s\to1/2^-$ was first obtained in \cite{CafVal}, and it was used to show that the singular set of any local minimizer of $P_{2s}(\cdot,\Omega)$ has actually an Hausdorff dimension at most $n-8$ for $s$ close enough to $1/2$. A more detailed analysis 
by $\Gamma$-convergence has been obtained in \cite{ADPM}. It is shown that the family of functionals $\{(1-2s)P_{2s}(\cdot,\Omega)\}_{0<s<1/2}$ $\Gamma$-converges for the $L^1(\Omega)$-convergence of characteristic functions to $\omega_{n-1}{\rm Per}(\cdot,\Omega)$, where $\omega_{n-1}$ denotes as usual the volume of the unit ball in $\R^{n-1}$. 
 
By analogy with minimal surfaces obtained as critical points of the perimeter, critical points of $P_{2s}$ are referred to as {\sl nonlocal minimal surfaces} (or to be more accurate, their boundary). Very recently, 
nonlocal minimal surfaces have been used to produce minimal hypersurfaces in Riemannian manifolds through their asymptotic analysis as $s\to1/2^-$, see \cite{CFS,CDSV}. In particular, an analogue of Yau's conjecture for nonlocal minimal surfaces have been proved in \cite{CFS}, whence eventually recovering the standard conjecture letting $s\to 1/2^-$. Let us finally observe that the very simple structure of $P_{2s}$ makes it 
 reasonably flexible to be generalized to other settings, eventually non smooth, and hopefully analyzed as $s\to 1/2^-$, see e.g. \cite{LPZ} and the references therein.  
\vskip5pt

In this article, we are concerned with partitions rather than single sets. Here the notion of partition is understood in the measure theoretic sense more than in the usual one. To be more precise, 
we fix a number $m\geq 3$ of chambers, and we say that a collection $\mathfrak{E}=(E_1,\ldots,E_m)$ of measurable subsets $E_i$ of $\R^n$ (resp. of $\Omega$) is partition of $\R^n$ (resp. of $\Omega$) if $|E_i\cap E_j|=0$ for $i\not=j$ and $|\R^n\setminus \bigcup_i E_i|=0$ (resp. $|\Omega\setminus\bigcup_i E_i|=0$). If each member $E_i$ of the partition $\mathfrak{E}$ turns out to be of finite perimeter in $\Omega$, we say that $\mathfrak{E}$ is a {\it Caccioppoli partition} of $\Omega$ (see e.g. \cite{AFP}). A natural generalization of the perimeter in $\Omega$ for a Caccioppoli partition which takes 
into account the difference between two distinct chambers is 
$$\mathscr{P}_1^{\boldsymbol{\sigma}}(\mathfrak{E},\Omega):=\frac{1}{2}\sum_{i,j=1}^m\sigma_{ij}\,\mathcal{H}^{n-1}(\partial^*E_i\cap\partial^*E_j\cap\Omega)\,,$$
where $\boldsymbol{\sigma}=(\sigma_{ij})$ is a symmetric $m\times m$ matrix such that $\sigma_{ii}=0$ and $\sigma_{ij}>0$ for $i\not=j$. 

The functional $\mathscr{P}_1^{\boldsymbol{\sigma}}$ commonly appears in models for mixtures of immiscible fluids filling the container~$\Omega$. 
More importantly,  $\mathscr{P}_1^{\boldsymbol{\sigma}}$ and dynamic flows associated to it arise in material science for the description of motion of grain boundaries in polycrystals, see \cite{EseOtt}.  
Each chamber $E_i$ represents an individual grain, and the coefficient $\sigma_{ij}$, called surface tension, attached to the interface $\partial E_i\cap\partial E_j$, depends 
on the mismatch between the crystallographic orientations of $E_i$ and $E_j$. A more accurate model should also depends on the local orientation of the interface, but we choose to ignore it here. 
From the mathematical perspective, existence and (partial) regularity theory for (local) minimizers of $\mathscr{P}_1^{\boldsymbol{\sigma}}$ (and actually more general functionals) has probably 
emerged in the famous memoir \cite{Alm}. Existence of minimizers is hinged on the lower semicontinuity of $\mathscr{P}_1^{\boldsymbol{\sigma}}$ with respect to the $L^1$-convergence of characteristic functions. It has been proved in \cite{AmBr,WB} that lower semicontinuity holds if and only if the matrix $\boldsymbol{\sigma}$ satisfies the triangle inequality 
$\sigma_{ij}\leq \sigma_{ik}+\sigma_{kj}$ for all triplet $\{i,j,k\}$. In turn, a partial regularity for the boundaries of minimizers has been obtained in \cite{Leo,WB} under the condition of {\sl strict} triangle inequality. In comparison and very interestingly, a (BMO) inspired scheme for curvature driven motion of networks dissipating $\mathscr{P}_1^{\boldsymbol{\sigma}}$ has been constructed \cite{EseOtt} which allows for (some) matrices $\boldsymbol{\sigma}$ which {\sl do not} satisfy the triangle inequality.  By its nature, this scheme replaces sharp interfacial energies by nonlocal (diffuse) interfacial energies involving a convolution with the heat kernel. If $\boldsymbol{\sigma}$ does not satisfy the triangle inequality, numerical evidence shows that a wetting effect occurs, i.e., the algorithm 
can instantaneously nucleate a new phase between two chambers to release energy. It is then expected that the  scheme converges to the (dissipative) flow of the relaxed functional, i.e., its lower semicontinuous enveloppe. 
\vskip5pt

Motivated by all these considerations, we are interested here in a nonlocal analogue of $\mathscr{P}_1^{\boldsymbol{\sigma}}$ modelled on the fractional $2s$-perimeter $P_{2s}$, replacing sharp interfacial energies by diffuse ones involving the convolution with the singular kernel $|\cdot|^{-(n+2s)}$. To this purpose, it is convenient  to introduce the $2s$-interaction energy 
$\mathcal{I}_{2s}(A,B)$ between two measurable sets $A,B\subset\R^n$ given by 
$$\mathcal{I}_{2s}(A,B):=\iint_{A\times B}\frac{\de x\de y}{|x-y|^{n+2s}}\,.$$
Given a {\sl fixed} number of chambers $m\geq 3$ and an arbitrary  $m\times m$ symmetric matrix $\boldsymbol{\sigma}=(\sigma_{ij})$ satisfying $\sigma_{ii}=0$ and $\sigma_{ij}>0$ for $i\not=j$, we define the fractional $\boldsymbol{\sigma}$-perimeter in $\Omega$ of a partition $\mathfrak{E}=(E_1,\ldots,E_m)$ of $\R^n$ to be 
$$\mathscr{P}_{2s}^{\boldsymbol{\sigma}}(\mathfrak{E},\Omega):=\frac{1}{2}\sum_{i,j=1}^m\sigma_{ij}\Big[\mathcal{I}_{2s}(E_i\cap\Omega,E_j\cap\Omega)+\mathcal{I}_{2s}(E_i\cap\Omega,E_j\cap\Omega^c)+\mathcal{I}_{2s}(E_i\cap\Omega^c,E_j\cap\Omega)\Big]\,. $$
If a partition consists of only one chamber and its complement, it is easy to see that its $\boldsymbol{\sigma}$-perimeter reduces to the ${2s}$-perimeter of the chamber times the corresponding coefficient of the matrix. Therefore, the functional $\mathscr{P}_{2s}^{\boldsymbol{\sigma}}$ is indeed a natural extension of $P_{2s}$ to partitions. It is also important to notice that $\mathscr{P}_{2s}^{\boldsymbol{\sigma}}(\cdot,\Omega)$ is lower semicontinuous with respect to $L^1$-convergence for any such matrix  $\boldsymbol{\sigma}$, as a direct consequence of Fatou's lemma. This is a 
major difference with $\mathscr{P}_1^{\boldsymbol{\sigma}}$ which allows to apply the Direct Method of Calculus of Variations to solve minimization problems involving $\mathscr{P}_{2s}^{\boldsymbol{\sigma}}(\cdot,\Omega)$ with no restrictions on the matrix (such as the triangle inequality). 
\vskip5pt

In the case of identical coefficients, that is $\sigma_{ij}=1$ for $i\not=j$ (that we denote by $\boldsymbol{\sigma}={\bf 1}$ with a slight abuse of notation), we simply have $\mathscr{P}_{2s}^{\bf 1}(\mathfrak{E},\Omega)=\sum_iP_{2s}(E_i,\Omega)$. This functional has been considered in \cite{ColMag} where the existence and partial regularity problem of isoperimetric clusters for 
$\mathscr{P}_{2s}^{\bf 1}$  is addressed at the image of  classical perimeter minimizing clusters, see \cite{Alm} and \cite[Chapter 4]{Maggi}. The analysis in  \cite{ColMag} has then been extended in 
\cite{CesNov} to the case $\sum_i\alpha_i P_{2s}(E_i,\Omega)$ for some coefficients $\alpha_i>0$. This last functional also fits to our setting as it corresponds to the case where the matrix 
 $\boldsymbol{\sigma}$ is given by $\sigma_{ij}=\alpha_i+\alpha_j$. Using the $\Gamma$-convergence  in \cite{ADPM} of $(1-2s)P_{2s}$ toward the standard perimeter, 
  a classification of all (locally) minimizing conical partitions for $m=3$ and $s$ close enough to $1/2$ has also been obtained in \cite{CesNov} in the plane $\R^2$. They consist of either two chambers separated by a straight line, or three chambers separated by three half lines meeting at a single point with angles uniquely determined by the $\alpha_i's$. For $\boldsymbol{\sigma}={\bf 1}$, this  condition on the angles reduces to the usual $120^\circ$ rule. 
 \vskip5pt
 
 In this article, our main goal is to perform the $\Gamma$-convergence analysis as $s\to 1/2^-$ of the family 
 $\{(1-2s)\mathscr{P}_{2s}^{\boldsymbol{\sigma}}(\cdot,\Omega)\}_{0<s<1/2}$  in the spirit of \cite{ADPM} for an arbitrary $m\times m$ matrix $\boldsymbol{\sigma}$ in the class 
 $$\mathscr{S}_m:=\big\{\boldsymbol{\sigma}=(\sigma_{ij})\in\mathscr{M}_{m\times m}(\R): \sigma_{ii}=0\,,\;\sigma_{ij}=\sigma_{ji}>0 \text{ for }i\not=j \}\,. $$
 At first sight, one might think from \cite{ADPM} that the limiting functional should simply be given by (a multiple of) $\mathscr{P}_1^{\boldsymbol{\sigma}}(\cdot,\Omega)$. However, a $\Gamma$-limit being lower semicontinuous by nature, and according to \cite{AmBr}, it cannot be given by  $\mathscr{P}_1^{\boldsymbol{\sigma}}(\cdot,\Omega)$ if $\boldsymbol{\sigma}$ does not satisfies the triangle inequality. Hence an additional relaxation effect must occur as $s\to1/2^-$ (very much as observed in \cite{EseOtt} for the (BMO) scheme). A more reasonable candidate is 
 given by the lower semicontinuous enveloppe  of $\mathscr{P}_1^{\boldsymbol{\sigma}}(\cdot,\Omega)$. Still by  \cite{AmBr}, this functional is given by 
 $$\mathscr{P}_1^{\bar{\boldsymbol{\sigma}}}(\mathfrak{E},\Omega)=\frac{1}{2}\sum_{i,j=1}^m\bar\sigma_{ij}\,\mathcal{H}^{n-1}(\partial^*E_i\cap\partial^*E_j\cap\Omega)\,, $$
 where the {\sl relaxed coefficients} matrix $\bar{\boldsymbol{\sigma}}\in\mathscr{S}_m$ is determined by
 $$\bar{\boldsymbol{\sigma}}:=\sup\Big\{\widetilde{\boldsymbol{\sigma}} : \widetilde{\boldsymbol{\sigma}}=(\widetilde\sigma_{ij})\in\mathscr{S}_m\,,\; \widetilde{\boldsymbol{\sigma}}\leq \boldsymbol{\sigma}\,,\;\widetilde\sigma_{ij}\leq \widetilde\sigma_{ik}+\widetilde\sigma_{kj}\;\forall\{i,j,k\}\subset\{1,\ldots,m\}\Big\}\,. $$
 Here the inequality between matrices is understood component by component. Since the family of matrices satisfying the triangle inequality forms a lattice, this definition of $\bar{\boldsymbol{\sigma}}$ makes sense (see also Lemma \ref{lemmrelaxcoeff} for a further characterization of $\bar{\boldsymbol{\sigma}}$).  
\vskip5pt

Before stating our results, let us introduce some useful notations. First, we shall be concerned with a bounded open set $\Omega\subset\R^n$ with Lipschitz regular boundary. 
The family of partitions $\mathfrak{E}$ of $\R^n$ of finite $\mathscr{P}_{2s}^{\boldsymbol{\sigma}}(\cdot,\Omega)$-energy is denoted by 
$$\mathscr{A}_m^s(\Omega):=\big\{\mathfrak{E}: \text{$\mathfrak{E}=(E_1,\ldots,E_m)$ is a partition of $\R^n$}\,,\;\mathscr{P}_{2s}^{\boldsymbol{\sigma}}(\mathfrak{E},\Omega)<\infty\big\} \,.$$
As in $\cite{ADPM}$, it will be convenient to split $\mathscr{P}_{2s}^{\boldsymbol{\sigma}}(\mathfrak{E},\Omega)$ in two pieces according to interactions within~$\Omega$ and interactions with the exterior of $\Omega$. In other words, we write for $\mathfrak{E}=(E_1,\ldots,E_m)\in \mathscr{A}_m^s(\Omega)$, 
$$\mathscr{P}_{2s}^{\boldsymbol{\sigma}}(\mathfrak{E},\Omega)= \mathscr{E}_{2s}^{\boldsymbol{\sigma}}(\mathfrak{E},\Omega)+\mathscr{F}_{2s}^{\boldsymbol{\sigma}}(\mathfrak{E},\Omega)\,,$$
with 
$$ \mathscr{E}_{2s}^{\boldsymbol{\sigma}}(\mathfrak{E},\Omega):= \frac{1}{2}\sum_{i,j=1}^m\sigma_{ij}\,\mathcal{I}_{2s}(E_i\cap\Omega,E_j\cap \Omega)\,,$$
and 
$$ \mathscr{F}_{2s}^{\boldsymbol{\sigma}}(\mathfrak{E},\Omega):= \frac{1}{2}\sum_{i,j=1}^m\sigma_{ij}\Big[\mathcal{I}_{2s}(E_i\cap\Omega,E_j\cap\Omega^c)+\mathcal{I}_{2s}(E_i\cap\Omega^c,E_j\cap\Omega)\Big]\,. $$
The $\mathscr{E}_{2s}^{\boldsymbol{\sigma}}$-part turns out to be the leading term of the energy, while $ \mathscr{F}_{2s}^{\boldsymbol{\sigma}}$ accounts for lower order contributions near $\partial\Omega$. 
\vskip5pt

We start with a compactness property for sequences of (locally) bounded energy essentially taken from \cite[Theorem 1]{ADPM}. 

\begin{theorem}\label{equicoercivethm}
 Consider a sequence $s_k\to 1/2^-$, and  assume that $\{\mathfrak{E}_k\}_{k\in \N}\subset \mathscr{A}_m^{s_k}(\Omega)$ satisfies
$$ \sup \limits_{k \in \N} \,(1-2s_k) \mathscr{E}_{2s_k}^{\boldsymbol{\sigma}}  (\mathfrak{E}_k,\Omega') < \infty \quad\forall \Omega^\prime\Subset\Omega\,.$$
Then $\{\mathfrak{E}_k\}_{k\in\mathbb{N}}$ is relatively compact in $[L^1_{\rm loc}(\Omega)]^m$, and any accumulation point $\mathfrak{E}$
is a Caccioppoli partition of $\Omega^\prime$
for every open subset $\Omega^\prime\Subset\Omega$. 
\end{theorem}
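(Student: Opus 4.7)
The plan is to reduce the claim to the single-set compactness result of \cite[Theorem~1]{ADPM}: any sequence of measurable sets $\{F_k\}\subset\R^n$ with
$$\sup_{k\in\N}\,(1-2s_k)\,\mathcal{I}_{2s_k}(F_k\cap\Omega'',F_k^c\cap\Omega'')<\infty\quad\text{for every }\Omega''\Subset\Omega$$
is relatively compact in $L^1_{\rm loc}(\Omega)$, with every accumulation point being of finite perimeter in each such $\Omega''$.

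The key reduction exploits the partition structure. For each fixed $i\in\{1,\ldots,m\}$ and each $\Omega'\Subset\Omega$, the facts that $|E^k_i\cap E^k_j|=0$ for $i\neq j$ and that $\bigcup_j E^k_j$ covers $\R^n$ up to a Lebesgue null set give
$$\mathcal{I}_{2s_k}\bigl(E^k_i\cap\Omega',(E^k_i)^c\cap\Omega'\bigr)=\sum_{j\neq i}\mathcal{I}_{2s_k}\bigl(E^k_i\cap\Omega',E^k_j\cap\Omega'\bigr)\,.$$
Setting $\sigma_{\min}:=\min_{i\neq j}\sigma_{ij}>0$, summing in $i$ and using the symmetry of the kernel and of $\boldsymbol{\sigma}$ leads to
$$\sigma_{\min}\sum_{i=1}^m\mathcal{I}_{2s_k}\bigl(E^k_i\cap\Omega',(E^k_i)^c\cap\Omega'\bigr)\leq 2\,\mathscr{E}^{\boldsymbol{\sigma}}_{2s_k}(\mathfrak{E}_k,\Omega')\,,$$
so that the standing assumption provides a uniform-in-$k$ bound on $(1-2s_k)\,\mathcal{I}_{2s_k}(E^k_i\cap\Omega',(E^k_i)^c\cap\Omega')$ for every $i$ and every $\Omega'\Subset\Omega$.

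Next I would apply the single-set compactness coordinate by coordinate, and perform a standard diagonal extraction simultaneously over $i\in\{1,\ldots,m\}$ and over an exhausting sequence $\Omega'_\ell\Subset\Omega$, to obtain a common subsequence (still denoted $\{\mathfrak{E}_k\}$) and measurable sets $E_1,\ldots,E_m$ such that $\chi_{E^k_i}\to\chi_{E_i}$ in $L^1_{\rm loc}(\Omega)$ for every $i$, each $E_i\cap\Omega'$ being of finite perimeter in $\Omega'$ for every $\Omega'\Subset\Omega$. Passing to a further subsequence one may assume $\chi_{E^k_i}\to\chi_{E_i}$ almost everywhere in $\Omega$. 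The pointwise identities $\chi_{E^k_i}\chi_{E^k_j}=0$ for $i\neq j$ and $\sum_i\chi_{E^k_i}=1$ then pass to the almost everywhere limit on $\Omega$, yielding $|E_i\cap E_j|=0$ and $|\Omega\setminus\bigcup_iE_i|=0$; thus $\mathfrak{E}=(E_1,\ldots,E_m)$ restricts to a Caccioppoli partition of every $\Omega'\Subset\Omega$.

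The whole argument is a bookkeeping reduction to \cite[Theorem~1]{ADPM}, so I do not expect any genuine obstacle. The only points requiring mild care are to verify that the ``interior-only'' quantity $\mathcal{I}_{2s_k}(E^k_i\cap\Omega',(E^k_i)^c\cap\Omega')$ extracted from the hypothesis is indeed what fuels the single-set compactness (since the external contributions making up $P_{2s}$ are not controlled by $\mathscr{E}^{\boldsymbol{\sigma}}_{2s_k}$), and to ensure the diagonal extraction over the $m$ chambers and the exhaustion of $\Omega$ is carried out once and for all so that a single subsequence works uniformly in $i$ and in $\Omega'\Subset\Omega$.
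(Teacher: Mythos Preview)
Your proposal is correct and follows essentially the same approach as the paper: both reduce to \cite[Theorem~1]{ADPM} via the inequality $\mathscr{E}^{\boldsymbol{\sigma}}_{2s_k}(\mathfrak{E}_k,\Omega')\geq\frac{\boldsymbol{\sigma}_{\rm min}}{2}\sum_i\mathcal{I}_{2s_k}(E_{i,k}\cap\Omega',E_{i,k}^c\cap\Omega')$ obtained from the partition identity $\sum_{j\neq i}\chi_{E_{j,k}}=\chi_{E_{i,k}^c}$, and then pass the partition constraints to the limit. The paper passes the constraint $\sum_i\chi_{E_{i,k}}=1$ directly in $L^1_{\rm loc}$, whereas you extract a further a.e.\ convergent subsequence; both are fine.
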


We may now state our $\Gamma$-convergence result showing the limit is indeed $\mathscr{P}_1^{\bar{\boldsymbol{\sigma}}}(\cdot,\Omega)$, 
the functional with relaxed coefficients $\bar{\boldsymbol{\sigma}}$, as expected.

\begin{theorem}\label{GCthm}
Let $\Omega\subset\R^n$ be a bounded open set with Lipschitz boundary. 
\begin{itemize}
\item[1)] The family $\{(1-2s)\mathscr{P}^{\boldsymbol{\sigma}}_{2s}(\cdot,\Omega)\}_{0<s<1/2}$ $\Gamma(L^1(\Omega))$-converges as $s\to1/2^-$ to $\omega_{n-1}\mathscr{P}_1^{\bar{\boldsymbol{\sigma}}}(\cdot,\Omega)$. 

\item[2)]  For every sequence $s_k\to1/2^-$ and  $\mathfrak{E}_{k}\in\mathscr{A}_m^{s_k}(\Omega)$ such that  $\mathfrak{E}_{k}\to \mathfrak{E}$ in $[L^1(\Omega)]^m$,  
$$\liminf_{k\to\infty}\,(1-2s_k) \mathscr{E}^{\boldsymbol{\sigma}}_{2s_k}(\mathfrak{E}_{k},\Omega)\geq \omega_{n-1}\mathscr{P}_1^{\bar{\boldsymbol{\sigma}}}( \mathfrak{E},\Omega)\,.$$
\end{itemize}
\end{theorem}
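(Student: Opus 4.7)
The statement splits into a $\liminf$ inequality, a $\limsup$ inequality (recovery sequence), and a reinforced interior $\liminf$ (item 2). Since $\mathscr{P}_{2s}^{\boldsymbol{\sigma}}\geq\mathscr{E}_{2s}^{\boldsymbol{\sigma}}$, item 2 is already stronger than the lower bound in item 1, so it suffices to prove item 2 and to construct a recovery sequence for item 1. The guiding principle is to reduce everything to the single-set $\Gamma$-convergence of $(1-2s)P_{2s}$ toward $\omega_{n-1}{\rm Per}$ from \cite{ADPM}, by exploiting two structural facts: a symmetric matrix with zero diagonal satisfies the triangle inequality if and only if it is a nonnegative combination of \emph{cut matrices} $\mathbf{c}^I$ (with $c^I_{ij}=1$ when exactly one of $i,j$ is in $I\subset\{1,\ldots,m\}$, and $0$ otherwise), as in \cite{AmBr}; and the relaxed coefficients $\bar\sigma_{ij}$ coincide with shortest-path distances in the complete weighted graph on $\{1,\ldots,m\}$ with weights $\sigma_{ij}$ (expected content of Lemma \ref{lemmrelaxcoeff}).

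\emph{Liminf (item 2).} Since $\bar{\boldsymbol{\sigma}}$ satisfies the triangle inequality by construction, I would write $\bar{\boldsymbol{\sigma}}=\sum_{I}\lambda_I\mathbf{c}^I$, a finite sum with $\lambda_I\geq 0$. Given $\mathfrak{E}_k\to\mathfrak{E}$ in $[L^1(\Omega)]^m$, form the auxiliary sets $F^k_I:=\bigcup_{i\in I}E^k_i\to F_I:=\bigcup_{i\in I}E_i$ in $L^1(\Omega)$. Using $\bar\sigma_{ij}\leq\sigma_{ij}$, one has
\[
\mathscr{E}^{\boldsymbol{\sigma}}_{2s_k}(\mathfrak{E}_k,\Omega)\;\geq\;\mathscr{E}^{\bar{\boldsymbol{\sigma}}}_{2s_k}(\mathfrak{E}_k,\Omega)\;=\;\frac{1}{2}\sum_I\lambda_I\,\mathcal{I}_{2s_k}(F^k_I\cap\Omega,(F^k_I)^c\cap\Omega).
\]
Applying the interior-part version of the single-set liminf from \cite{ADPM} to each $F^k_I$, multiplying by $\lambda_I$, summing, and unfolding ${\rm Per}(F_I,\Omega)=\sum_{i\in I,j\notin I}\mathcal{H}^{n-1}(\partial^*E_i\cap\partial^*E_j\cap\Omega)$ recovers exactly $\omega_{n-1}\mathscr{P}_1^{\bar{\boldsymbol{\sigma}}}(\mathfrak{E},\Omega)$, as the weights $\lambda_I$ reassemble the coefficients $\bar\sigma_{ij}$ by definition of the cut decomposition.

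\emph{Recovery sequence (item 1).} By a density argument for Caccioppoli partitions of finite $\mathscr{P}_1^{\bar{\boldsymbol{\sigma}}}$-energy, I would first reduce to $\mathfrak{E}$ whose chambers are polyhedral and meet $\partial\Omega$ transversally. For each interfacial face $\partial^*E_i\cap\partial^*E_j$ with $\bar\sigma_{ij}<\sigma_{ij}$, the shortest-path characterization supplies a chain $i=i_0,i_1,\ldots,i_L=j$ with $\bar\sigma_{ij}=\sum_\ell\sigma_{i_\ell i_{\ell+1}}$; near the face I insert, on one side of the interface, parallel slabs of the intermediate phases $E_{i_1},\ldots,E_{i_{L-1}}$ of width proportional to $\varepsilon$, obtaining a new polyhedral partition $\mathfrak{E}^\varepsilon$. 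Three properties must then be verified: (i) $\mathfrak{E}^\varepsilon\to\mathfrak{E}$ in $[L^1(\Omega)]^m$ as $\varepsilon\to 0$; (ii) $\mathscr{P}_1^{\boldsymbol{\sigma}}(\mathfrak{E}^\varepsilon,\Omega)\to\mathscr{P}_1^{\bar{\boldsymbol{\sigma}}}(\mathfrak{E},\Omega)$, because each original $\sigma_{ij}$-face is replaced by a stack of $L$ parallel faces of total cost $\bar\sigma_{ij}$, the transverse slab boundaries contributing only $O(\varepsilon)$ extra area; (iii) for fixed $\varepsilon$, chamber-by-chamber application of the pointwise convergence from \cite{ADPM} gives $(1-2s)\mathscr{P}^{\boldsymbol{\sigma}}_{2s}(\mathfrak{E}^\varepsilon,\Omega)\to \omega_{n-1}\mathscr{P}_1^{\boldsymbol{\sigma}}(\mathfrak{E}^\varepsilon,\Omega)$ as $s\to 1/2^-$. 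A standard diagonal extraction $\varepsilon=\varepsilon(s)\to 0$ then yields the recovery sequence.

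\emph{Main obstacle.} The hardest step is a globally consistent execution of the slab construction in (ii) near \emph{junctions}, i.e.\ points where three or more chambers meet in $\Omega$ or where an interface hits $\partial\Omega$: at such points, different adjacent faces may require different intermediate paths, possibly involving overlapping intermediate phases, and one must orchestrate the slab insertion so that $\mathfrak{E}^\varepsilon$ remains a bona fide partition while still realizing the target energy identity. The polyhedrality reduction and the transversality to $\partial\Omega$ are used essentially here, and I expect a local combinatorial/geometric construction near each junction, together with a careful control of the extra $O(\varepsilon)$-interfacial area as $\varepsilon\to 0$. A secondary technical point is the density step itself: approximating a general partition of finite $\mathscr{P}_1^{\bar{\boldsymbol{\sigma}}}$-energy by polyhedral partitions with convergence of the \emph{relaxed} energy requires a relaxation-aware approximation, not merely the standard one for Caccioppoli partitions.
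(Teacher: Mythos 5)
Your recovery-sequence outline is essentially the paper's route: polyhedral approximation \`a la Baldo, pointwise convergence of $(1-2s)\mathscr{P}^{\boldsymbol{\sigma}}_{2s}$ on polyhedral partitions, slab insertion along optimal chains to pass from $\mathscr{P}_1^{\boldsymbol{\sigma}}$ to $\mathscr{P}_1^{\bar{\boldsymbol{\sigma}}}$, and a diagonal/lower-semicontinuity argument. The junction difficulty you flag is resolved in the paper simply by \emph{not touching} the junctions: the slabs are inserted only inside finitely many disjoint closed cubes that meet a single face of $F_{ij}$ and avoid the lower-dimensional edge set $A_{ij}$ entirely, at the price of an error ${\rm e}_{ij}(\delta)\to0$ in the covered area (conditions (0)--(3) in Lemma \ref{Polyhedres approchent polyhedre}); and the density step works with the ordinary Baldo approximation because it converges all pairwise interface measures $\mathcal{H}^{n-1}(\partial\Pi_{i,h}\cap\partial\Pi_{j,h}\cap\Omega)$, hence the relaxed energy, no ``relaxation-aware'' approximation being needed.

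The $\liminf$ half, however, rests on a false premise. You claim that a symmetric matrix with zero diagonal satisfying the triangle inequality is always a nonnegative combination of cut matrices. That equivalence holds only for $m\le 4$; for $m\ge 5$ the cut cone is a \emph{proper} subcone of the metric cone (this is exactly the $\ell_1$-embeddability issue discussed in Section \ref{secminhalfsp} of the paper, with reference to Deza--Laurent), and since $\bar{\boldsymbol{\sigma}}$ can be an arbitrary metric (every semi-metric is the shortest-path metric of the complete graph weighted by itself), there is no decomposition $\bar{\boldsymbol{\sigma}}=\sum_I\lambda_I\mathbf{c}^I$ in general. Your reduction of item 2 to the single-set $\liminf$ of \cite{ADPM} therefore only proves the theorem for $m\in\{3,4\}$ or for $\ell_1$-embeddable $\bar{\boldsymbol{\sigma}}$ --- precisely the special cases the paper treats separately by this very superposition argument. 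In the general case the paper has to work much harder: the blow-up method yields a lower bound $\mathscr{P}_1^{\boldsymbol{\Gamma}}$ with cell-formula coefficients $\Gamma_{ij}$ (Proposition \ref{proplowerbd1}); two gluing lemmas (Section \ref{geomconstcube}) rework the cell formula into a genuine minimization problem $\overline\Gamma_{ij}(s)$ with half-space exterior data; and the identification $\overline\Gamma_{ij}(s)=(1-2s)\bar\sigma_{ij}P_{2s}(H,Q)$ is obtained by a replacement argument (Lemma \ref{leonardilemma}) based on the max-flow min-cut theorem, which is the device that substitutes for the missing cut decomposition and uses only the triangle inequality. Without something of this kind, your argument does not close.
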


For the sake of completeness, we recall that 1) means that for every sequence $s_k\to1/2^-$, we have 
\begin{itemize}
\item[(i)] for every $\{\mathfrak{E}_{k}\}_{k\in\mathbb{N}}=\{(E_{1,k},\ldots,E_{m,k})\}_{k\in\mathbb{N}}\subset\mathscr{A}_m^{s_k}(\Omega)$ and partition $\mathfrak{E}=(E_1,\ldots,E_m)$ of $\Omega$ such that $\chi_{E_{i,k}}\to \chi_{E_{i}}$ in $L^1(\Omega)$ for every $i\in\{1,\ldots,m\}$, 
$$\liminf_{k\to\infty}\,(1-2s_k) \mathscr{P}^{\boldsymbol{\sigma}}_{2s_k}(\mathfrak{E}_{k},\Omega)\geq \omega_{n-1}\mathscr{P}_1^{\bar{\boldsymbol{\sigma}}}( \mathfrak{E},\Omega)\,; $$
\item[(ii)] for every partition  $\mathfrak{E}=(E_1,\ldots,E_m)$ of $\Omega$, there exists $\{\mathfrak{E}_{k}\}_{k\in\mathbb{N}}=\{(E_{1,k},\ldots,E_{m,k})\}_{k\in\mathbb{N}}\subset\mathscr{A}_m^{s_k}(\Omega)$ such that  $\chi_{E_{i,k}}\to \chi_{E_{i}}$ in $L^1(\Omega)$ for every $i\in\{1,\ldots,m\}$, and 
$$\lim_{k\to\infty}\,(1-2s_k) \mathscr{P}^{\boldsymbol{\sigma}}_{2s_k}(\mathfrak{E}_{k},\Omega)= \omega_{n-1}\mathscr{P}_1^{\bar{\boldsymbol{\sigma}}}( \mathfrak{E},\Omega)\,. $$
\end{itemize}

To make  Theorem \ref{GCthm} useful, we have also analyzed the behavior as $s\to1/2^-$ of local minimizers of $\mathscr{P}^{\boldsymbol{\sigma}}_{2s}(\cdot,\Omega)$ in the spirit of \cite[Theorem 3]{ADPM} for local minimizers of $P_{2s}(\cdot,\Omega)$. Similarly to the single phase case, we say that $\mathfrak{E}=(E_{1},\ldots,E_{m})\in\mathscr{A}_m^{s}(\Omega)$ is a local minimizer of $\mathscr{P}^{\boldsymbol{\sigma}}_{2s}(\cdot,\Omega)$ if 
$$\mathscr{P}^{\boldsymbol{\sigma}}_{2s}(\mathfrak{E},\Omega)\leq  \mathscr{P}^{\boldsymbol{\sigma}}_{2s}(\mathfrak{F},\Omega)$$
for every competitor $\mathfrak{F}=(F_{1},\ldots,F_{m})\in\mathscr{A}_m^{s}(\Omega)$ such that $E_i\triangle F_i\Subset\Omega$ for every $i\in\{1,\ldots,m\}$.  
With the analogue definition concerning partitions of $\Omega$ locally minimizing  $\mathscr{P}_1^{\bar{\boldsymbol{\sigma}}}(\cdot,\Omega)$, we have

\begin{theorem}\label{convlocmin}
Let $s_k\to 1/2^-$ an arbitrary sequence. If $\mathfrak{E}_k=(E_{1,k},\ldots,E_{m,k})\in \mathscr{A}_m^{s_k}(\Omega)$ are local minimizers of $\mathscr{P}^{\boldsymbol{\sigma}}_{2s_k}(\cdot,\Omega)$ such that $\chi_{E_{i,k}}\to \chi_{E_i}$ in $L^1_{\rm loc}(\Omega)$ for every $i\in\{1,\ldots,m\}$, then 
\begin{equation}\label{bornsuplocmin}
\limsup_{k\to\infty}\,(1-2s_k)\mathscr{P}^{\boldsymbol{\sigma}}_{2s_k}(\mathfrak{E}_k,\Omega^\prime)<+\infty \quad\forall \Omega^\prime\Subset\Omega\,,
\end{equation}
$\mathfrak{E}=(E_1,\ldots,E_m)$ is a partition of $\Omega$, $\mathfrak{E}$ is a local minimizer of  $\mathscr{P}^{\bar{\boldsymbol{\sigma}}}_{1}(\cdot,\Omega)$, and 
$$\lim_{k\to\infty}\,(1-2s_k)\mathscr{P}^{\boldsymbol{\sigma}}_{2s_k}(\mathfrak{E}_k,\Omega^\prime)= \omega_{n-1}\mathscr{P}^{\bar{\boldsymbol{\sigma}}}_{1}(\mathfrak{E},\Omega^\prime) $$
for every $ \Omega^\prime\Subset\Omega$ such that $\sum_i{\rm Per}(E_i,\partial \Omega^\prime)=0$. 
\end{theorem}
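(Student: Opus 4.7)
The proof naturally splits into three parts, which I sketch in order.

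\medskip

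\emph{Part 1: the local energy bound \eqref{bornsuplocmin}.} Fix $\Omega'\Subset\Omega$, pick a smooth $\Omega''$ with $\Omega'\Subset\Omega''\Subset\Omega$, and choose any label $l\in\{1,\ldots,m\}$. The partition $\mathfrak{F}_k$ defined by $F_{l,k}:=E_{l,k}\cup\Omega''$ and $F_{j,k}:=E_{j,k}\setminus\Omega''$ for $j\neq l$ agrees with $\mathfrak{E}_k$ outside $\Omega''\Subset\Omega$, hence is an admissible competitor. Because both partitions coincide on $\Omega^c$, the inequality $\mathscr{P}^{\boldsymbol{\sigma}}_{2s_k}(\mathfrak{E}_k,\Omega)\leq \mathscr{P}^{\boldsymbol{\sigma}}_{2s_k}(\mathfrak{F}_k,\Omega)$ reduces to the full-space comparison $\sum_{i,j}\sigma_{ij}\mathcal{I}_{2s_k}(E_{i,k},E_{j,k})\leq \sum_{i,j}\sigma_{ij}\mathcal{I}_{2s_k}(F_{i,k},F_{j,k})$. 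Expanding and rearranging, the extracted positive part bounds $\mathscr{E}^{\boldsymbol{\sigma}}_{2s_k}(\mathfrak{E}_k,\Omega'')$ from below while the extracted negative part is dominated by $C\sum_{j\neq l}\sigma_{lj}P_{2s_k}(\Omega'',\R^n)$. The classical limit $(1-2s)P_{2s}(\Omega'',\R^n)\to \omega_{n-1}\mathcal{H}^{n-1}(\partial\Omega'')$ for smooth bounded $\Omega''$ then yields $(1-2s_k)\mathscr{E}^{\boldsymbol{\sigma}}_{2s_k}(\mathfrak{E}_k,\Omega'')\leq C_{\Omega''}$. Splitting $\mathscr{P}^{\boldsymbol{\sigma}}_{2s_k}(\mathfrak{E}_k,\Omega')$ into its interior piece (controlled by $\mathscr{E}^{\boldsymbol{\sigma}}_{2s_k}(\mathfrak{E}_k,\Omega'')$) plus interactions across $\partial\Omega'$ separated from $\Omega'$ by $\rmdist(\Omega',\partial\Omega'')>0$ (uniformly bounded in $k$) produces \eqref{bornsuplocmin}. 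Theorem~\ref{equicoercivethm} applied along an exhaustion $\Omega'_p\nearrow\Omega$ then shows that $\mathfrak{E}$ is a partition of $\Omega$.

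\medskip

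\emph{Part 2: local minimality of $\mathfrak{E}$ for $\mathscr{P}^{\bar{\boldsymbol{\sigma}}}_1(\cdot,\Omega)$.} Let $\mathfrak{G}$ be a competitor with $E_i\triangle G_i\Subset\Omega$, and pick smooth $\Omega'\Subset\Omega''\Subset\Omega$ enclosing the symmetric differences. Choose a recovery sequence $\mathfrak{G}_k\to\mathfrak{G}$ in $[L^1(\Omega)]^m$ satisfying $\lim(1-2s_k)\mathscr{P}^{\boldsymbol{\sigma}}_{2s_k}(\mathfrak{G}_k,\Omega)=\omega_{n-1}\mathscr{P}^{\bar{\boldsymbol{\sigma}}}_1(\mathfrak{G},\Omega)$ via Theorem~\ref{GCthm}~(ii). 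For $t\in(0,T)$ with $T:=\rmdist(\Omega',\partial\Omega'')$ and $\Omega_t:=\{\rmdist(\cdot,\Omega')<t\}$, define the sliced competitor $\mathfrak{H}_{k,t}:=(H_{1,k,t},\ldots,H_{m,k,t})$ by
\[
H_{i,k,t}:=(G_{i,k}\cap\Omega_t)\cup(E_{i,k}\setminus\Omega_t)\,,
\]
which is a partition of $\R^n$ agreeing with $\mathfrak{E}_k$ outside $\Omega_t\Subset\Omega$. Reducing $\sum\sigma_{ij}[\mathcal{I}_{2s_k}(E_{i,k},E_{j,k})-\mathcal{I}_{2s_k}(H_{i,k,t},H_{j,k,t})]\leq 0$ to full-space interactions isolates the interior differences $\mathscr{E}^{\boldsymbol{\sigma}}_{2s_k}(\mathfrak{E}_k,\Omega_t)-\mathscr{E}^{\boldsymbol{\sigma}}_{2s_k}(\mathfrak{G}_k,\Omega_t)$ plus cross interactions of the form $\mathcal{I}_{2s_k}((G_{i,k}\triangle E_{i,k})\cap\Omega_t,E_{j,k}\setminus\Omega_t)$. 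Averaging these cross terms over $t$ in a thin strip via Fubini and exploiting the $L^1$-convergence $G_{i,k},E_{i,k}\to E_i$ on the annulus $\Omega''\setminus\overline{\Omega'}$ (where $\mathfrak{G}=\mathfrak{E}$) selects $t=t_k$ at which $(1-2s_k)$ times the cross interactions tends to $0$ and for which ${\rm Per}(E_i,\partial\Omega_{t_k})={\rm Per}(G_i,\partial\Omega_{t_k})=0$. Applying Theorem~\ref{GCthm}~(2) on the left and the recovery limit on the right, together with the additivity identity $\mathscr{P}^{\bar{\boldsymbol{\sigma}}}_1(\mathfrak{E},\Omega)-\mathscr{P}^{\bar{\boldsymbol{\sigma}}}_1(\mathfrak{G},\Omega)=\mathscr{P}^{\bar{\boldsymbol{\sigma}}}_1(\mathfrak{E},\Omega_{t_k})-\mathscr{P}^{\bar{\boldsymbol{\sigma}}}_1(\mathfrak{G},\Omega_{t_k})$ (since $\mathfrak{E}=\mathfrak{G}$ on $\Omega\setminus\overline{\Omega'}$), one concludes $\mathscr{P}^{\bar{\boldsymbol{\sigma}}}_1(\mathfrak{E},\Omega)\leq \mathscr{P}^{\bar{\boldsymbol{\sigma}}}_1(\mathfrak{G},\Omega)$.

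\medskip

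\emph{Part 3 and main obstacle.} For the sharp convergence on $\Omega'\Subset\Omega$ with $\sum_i{\rm Per}(E_i,\partial\Omega')=0$, the liminf inequality is Theorem~\ref{GCthm}~(2) restricted to $\Omega'$. For the matching limsup, I repeat the Part~2 construction with the special choice $\mathfrak{G}:=\mathfrak{E}$ and a level $t_k\to 0^+$ chosen slowly enough that the cross interactions remain $o((1-2s_k)^{-1})$: local minimality then yields $(1-2s_k)\mathscr{E}^{\boldsymbol{\sigma}}_{2s_k}(\mathfrak{E}_k,\Omega_{t_k})\leq \omega_{n-1}\mathscr{P}^{\bar{\boldsymbol{\sigma}}}_1(\mathfrak{E},\overline{\Omega_{t_k}})+o(1)$, and the hypothesis $\sum_i{\rm Per}(E_i,\partial\Omega')=0$ forces the right-hand side to converge to $\omega_{n-1}\mathscr{P}^{\bar{\boldsymbol{\sigma}}}_1(\mathfrak{E},\Omega')$. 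The main technical difficulty is the uniform-in-$s_k$ control of the nonlocal cross interactions across the sliding interface $\partial\Omega_t$: since the kernel $|x-y|^{-(n+2s_k)}$ concentrates on arbitrarily thin neighbourhoods of $\partial\Omega_t$ as $s_k\to 1/2^-$, one must combine the Fubini slicing with a sharp coarea-type estimate that exploits the $L^1$-smallness of $(G_{i,k}\triangle E_{i,k})\cap(\Omega_t\setminus\Omega')$ to secure asymptotic vanishing of the cross terms---a balance made more delicate in Part~3 by the simultaneous shrinking $t_k\downarrow 0$.
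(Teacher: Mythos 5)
Your Part 1 is essentially the paper's Step 1 (fill $\Omega'$ with one chamber and bound the resulting exterior energy by $\boldsymbol{\sigma}_{\rm max}P_{2s_k}(\Omega',\R^n)$), and the overall skeleton of Parts 2--3 (recovery sequence, gluing, local minimality, then matching upper and lower bounds) is the right one. The problem is in the gluing, which is precisely where the paper has to work hardest, and your version of it has a genuine gap.

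You glue the two partitions by a sharp spatial cut, $H_{i,k,t}=(G_{i,k}\cap\Omega_t)\cup(E_{i,k}\setminus\Omega_t)$, and claim the cross interactions across $\partial\Omega_t$ vanish after multiplication by $(1-2s_k)$ thanks to Fubini averaging in $t$ and the $L^1$-smallness of $(G_{i,k}\triangle E_{i,k})$ in the annulus. This controls only part of the cross term. Writing, for $i\not=j$,
$$\mathcal{I}_{2s_k}(G_{i,k}\cap\Omega_t,E_{j,k}\setminus\Omega_t)\leq \mathcal{I}_{2s_k}(E_{i,k}\cap\Omega_t,E_{j,k}\setminus\Omega_t)+\mathcal{I}_{2s_k}\big((G_{i,k}\triangle E_{i,k})\cap\Omega_t,\Omega_t^c\big)\,,$$
the second piece is indeed killed by the averaging-in-$t$ argument (as in Step 2 of Lemma \ref{lemmodif1}), but the first piece involves two \emph{distinct chambers of the minimizing partition} $\mathfrak{E}_k$ sitting on opposite sides of the cut. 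Averaging it over $t\in(t_1,t_2)$ only yields a bound of order $(t_2-t_1)\cdot(1-2s_k)^{-1}$ divided by $(t_2-t_1)$, i.e.\ a \emph{bounded, not vanishing}, contribution after multiplication by $(1-2s_k)$; and $L^1$-convergence of $\chi_{E_{i,k}}$ says nothing about where the interfaces of $\mathfrak{E}_k$ sit relative to $\partial\Omega_t$ at finite $k$. The missing ingredient is the ``good layer'': one must introduce the energy measures $\boldsymbol{\alpha}_k(A)=(1-2s_k)\mathscr{E}^{\boldsymbol{\sigma}}_{2s_k}(\mathfrak{E}_k,A)$, extract a weak limit via De Giorgi--Letta (this uses \eqref{bornsuplocmin}), and place the cut inside a thin closed annulus $A$ with $\limsup_k\boldsymbol{\alpha}_k(A)\leq\varepsilon$ and similarly for the recovery sequence $\mathfrak{G}_k$ (the paper's \eqref{goodlay1}--\eqref{goodlay3}); only then is $(1-2s_k)\mathcal{I}_{2s_k}(E_{i,k}\cap\Omega_t,E_{j,k}\setminus\Omega_t)\leq C\varepsilon/\boldsymbol{\sigma}_{\rm min}$ plus a harmless far-field term. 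You name this as ``the main technical difficulty'' but the tool you propose for it (coarea slicing exploiting $L^1$-smallness of the symmetric differences) cannot close it. The paper's actual gluing is also different in nature: it interpolates with a cutoff, $w_k=(1-\varphi)u_k+\varphi v_k$ with $u_k,v_k$ the $\R^m$-valued indicator maps, applies the generalized coarea formula to the distance functions $d_{i,k}=\mathrm{dist}(w_k,{\rm e}_i)$, and uses the disjointness of the balls $B({\rm e}_i,1/2)$ to extract level sets that again form a partition; this is what preserves the partition constraint while keeping the extra energy of order $\varepsilon$.

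Finally, your Part 3 with a simultaneously shrinking layer $t_k\downarrow 0$ is riskier than necessary and not justified: the error terms produced by any gluing scale like $\bar\delta^{-(n+2s_k)}$ in the layer width $\bar\delta$, so one cannot shrink the layer with $k$ without a quantitative diagonal argument you do not provide. The paper instead fixes the layer, proves $\lim_k(1-2s_k)\mathscr{P}^{\boldsymbol{\sigma}}_{2s_k}(\mathfrak{E}_k,B_R)=\omega_{n-1}\mathscr{P}^{\bar{\boldsymbol{\sigma}}}_1(\mathfrak{E},B_R)$ for all but countably many radii by taking $\mathfrak{F}=\mathfrak{E}$ in the Part 2 chain of inequalities, and then upgrades to arbitrary $\Omega'$ with $\sum_i{\rm Per}(E_i,\partial\Omega')=0$ by identifying the limit set function $\boldsymbol{\alpha}$ with $\omega_{n-1}\mathscr{P}^{\bar{\boldsymbol{\sigma}}}_1(\mathfrak{E},\cdot)$.
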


Concerning the proofs of those theorems, many ingredients are of course borrowed from \cite{ADPM} but dealing with partitions introduce some specific and serious difficulties that need to be fully handled. Except perhaps for Theorem \ref{equicoercivethm} which is a quite direct consequence of \cite[Theorem~1]{ADPM}. Theorem \ref{GCthm} is as usual proved in two steps, a suitable upper bound for the $\Gamma-\limsup$ functional, and then a matching lower bound for the $\Gamma-\liminf$. Concerning the upper bound, we shall make use of the density result of polyhedral 
partitions in \cite{Bal}. For polyhedral partitions, a lengthy computation shows that $(1-2s)\mathscr{P}^{\boldsymbol{\sigma}}_{2s}(\cdot,\Omega)$ converges pointwise as $s\to1/2^-$ 
to the functional $\omega_{n-1}\mathscr{P}_1^{{\boldsymbol{\sigma}}}(\cdot,\Omega)$. At this stage, we could have argued that the resulting upper bound extends to arbitrary partitions by density, and then relaxe $\mathscr{P}_1^{{\boldsymbol{\sigma}}}(\cdot,\Omega)$ using \cite{AmBr} to recover  $\mathscr{P}_1^{\bar{\boldsymbol{\sigma}}}(\cdot,\Omega)$. Instead, we have relaxed ``by hands''
the functional $\mathscr{P}_1^{{\boldsymbol{\sigma}}}(\cdot,\Omega)$ restricted to polyhedral partitions, and then concluded by density. We found this strategy more enlightening to understand the relaxation process.  The whole $\Gamma-\limsup$ construction is the object of Section \ref{subsecGLS}. For the $\Gamma-\liminf$ inequality, we first obtain in Section \ref{subsecliminffirst} a lower estimate by means of the {\sl blow up method}. It provides a lower bound in terms of a perimeter functionals involving a matrix whose coefficients are abstractly defined through some asymptotic cell formula. As usual in this kind of business, we have to rework the cell formula to rephrase it as the limit of  minimum values of minimization problems in the cell. In order to do so, one has to reduce the class of competitors in the original cell formula. It requires a delicate construction to ``glue'' different competitors that we perform in Section \ref{geomconstcube}. It is well known that, for partitions, this matter is much more serious than for the single phase problem. In Section~\ref{secminhalfsp}, we address the minimization problem involved in the newly obtained cell formula. 
It consists in finding the minimizers of $\mathscr{P}^{\bar{\boldsymbol{\sigma}}}_{2s}$ that agree with the two components partition induced by the half space outside the cell. Here, 
we are able to prove that the partition induced by the half space itself is actually the unique minimizer. To prove this fact, 
we combine the known minimality of the half space for $P_{2s}$ from \cite{ADPM,CRS} with an argument inspired from \cite{Leo} and based on a classical result from graph theory and optimization, the {\sl max-flow min-cut theorem}. With this result in hands, the limiting value in the cell formula becomes explicitly computable and it returns the matrix $\omega_{n-1}\bar{\boldsymbol{\sigma}}$, whence the matching lower bound.  We prove Theorem \ref{convlocmin} in the very last Section \ref{sectcvlocmin}. Here again, the whole issue is to be able to glue an arbitrary recovery sequence of partitions with the given sequence of minimizing partitions without increasing too much the energy. As in \cite{ADPM}, we shall make use of a generalized co-area formula, but in a clever way to keep the partitioning constraint. Compare to \cite{ADPM}, the gluing construction is not energy sharp, and we have to show in addition that a good layer can be found to perform the gluing with an arbitrary small increase of energy. 
\vskip10pt

\noindent{\bf Notation.} Throughout the paper, we  write $\boldsymbol{\sigma}_{\rm min}:=\min_{i\not= j}\sigma_{ij}$ and $\boldsymbol{\sigma}_{\rm max}:=\max_{i,j}\sigma_{ij}$.

\section{Equi-coercivity}

\begin{proof}[Proof of Theorem \ref{equicoercivethm}]
 The proof is a consequence of  \cite[Theorem 1]{ADPM}. Indeed, let $\Omega' \Subset\Omega$ be an arbitrary open subset. For $\mathfrak{E}_k=(E_{1,k},\ldots,E_{m,k})$, we estimate
\begin{align*}
\mathscr{E}_{2s_k}^{\boldsymbol{\sigma}}  (\mathfrak{E}_k,\Omega') & = \frac{1}{2}\sum_{i=1}^m\Big(\sum_{j\not=i} \sigma_{ij}\,\mathcal{I}_{2s_k}(E_{i,k} \cap \Omega^\prime,E_{j,k} \cap \Omega^\prime) \Big)
\\
& \geq \frac{\boldsymbol{\sigma}_{\rm min}}{2} \sum_{i= 1}^m \mathcal{I}_{2s_k}(E_{i,k} \cap \Omega^\prime,E^c_{i,k} \cap \Omega^\prime) \,,
\end{align*}
using the fact that $\sum_{j\not= i}\chi_{E_{j,k}}=1-\chi_{E_{i,k}}=\chi_{E^c_{i,k}}$ since $\mathfrak{E}_k$ is a partition. Therefore,
$$ \sup_{k\in\mathbb{N}}\, \sum_{i= 1}^m (1-2s_k)\mathcal{I}_{2s_k}(E_{i,k} \cap \Omega^\prime,E^c_{i,k} \cap \Omega^\prime) <\infty\,.$$
According to 
\cite[Theorem 1]{ADPM}, each sequence $\{E_{i,k}\}_{k \in \N}$ is relatively compact in $L^1_{\rm loc}(\Omega) $ and any limit set has  locally finite perimeter in $\Omega$. 
Therefore, $\{\mathfrak{E}_k\}_{k\in\mathbb{N}}$ is relatively compact in $[L^1_{\rm loc}(\Omega)]^m$, and if $\mathfrak{E}=(E_1,\ldots,E_m)$ denotes an accumulation point, then $1=\sum_{i}\chi_{E_{i,k}}\to \sum_{i}\chi_{E_{i}}$ in $L^1_{\rm loc}(\Omega)$ for some (not relabeled) subsequence. Hence $\sum_{i}\chi_{E_{i}}=1$ a.e. in $\Omega$. Since each $E_i$ has locally finite 
perimeter in $\Omega$, we conclude that $\mathfrak{E}$ is a Caccioppoli partition of $\Omega^\prime$ for every open subset $\Omega^\prime\Subset \Omega$. 
\end{proof}

\begin{remark}
By the very same proof as above, if $\sup_{k} \,(1-2s_k) \mathscr{E}_{2s_k}^{\boldsymbol{\sigma}}  (\mathfrak{E}_k,\Omega) < \infty$, then the sequence 
$\{E_{i,k}\}_{k \in \N}$ is relatively compact in $L^1(\Omega) $, and any limit $\mathfrak{E}$ is a Caccioppoli partition of the whole domain $\Omega$. We shall see the full argument in the next section. 
\end{remark}

\section{Gamma convergence inequalities}

The purpose of this section is twofold. In a first subsection, we prove a preliminary lower bound for the $\Gamma-\liminf$ inequality. In the next subsection, we prove the upper bound for the $\Gamma-\limsup$, first in the case of polyhedral partitions, and then in the general case by relaxation, and then approximation. The strategy ressembles to the one in \cite{ADPM}, but some major differences appear  in the case of partitions, and we shall provide as many details as necessary to be self contained.

\subsection{A preliminary $\Gamma-\liminf$ inequality}\label{subsecliminffirst}

For a partition $\mathfrak{E}=(E_1,\ldots,E_m)$ of $\Omega$ made of measurable sets,  we consider the so-called $\Gamma(L^1(\Omega))$-lower limit of $(1-2s)\mathscr{P}^{\boldsymbol{\sigma}}_{2s}(\cdot,\Omega)$ at $\mathfrak{E}$, 
\begin{multline*}
({\mathscr{P}}^{\boldsymbol{\sigma}})_*(\mathfrak{E},\Omega):=\inf\Big\{\liminf_{k\to\infty}\,(1-2s_k)\mathscr{P}^{\boldsymbol{\sigma}}_{2s_k}(\mathfrak{E}_k,\Omega): s_k\to 1/2^-\,,\\
\mathfrak{E}_k=(E_{1,k},\ldots,E_{m,k})\in\mathscr{A}_m^{s_k}(\Omega)\,,\;
\chi_{E_{i,k}}\to \chi_{E_i}\text{ in $L^1(\Omega)$}\;\forall i\in\{1,\ldots,m\}\Big\}\,.
\end{multline*}
We aim  to obtain a preliminary lower bounds for the value of $(\mathscr{P}^{\boldsymbol{\sigma}})_*(\mathfrak{E},\Omega)$ by means of the {\sl blow-up method} first introduced in \cite{FonMul}. To this purpose, and for the rest of our discussion,  we shall denote by $Q$ the canonical unit cube of $\R^n$, i.e., 
$$Q:=(-\frac{1}{2},\frac{1}{2})^n\,,$$ 
and by $H$ the upper half-space of $\R^n$, that is 
 $$H:=\big\{x \in \R^n, x_n \geq 0 \big\}\,.$$ 
We also define the $m\times m$ matrix $\boldsymbol{\Gamma}=(\Gamma_{ij})$ by setting $\Gamma_{ii}=0$, and for 
 $i \neq j$, 
\begin{multline*} 
\Gamma_{ij} \vcentcolon = \inf \bigg\{ \liminf  \limits_{k \rightarrow \infty}  (1 -2s_k) \mathscr{E}_{2s_k}^{\boldsymbol{\sigma}}(\mathfrak{E}_{k},Q) : s_k \to 1/2^-\,,\; \mathfrak{E}_k=(E_{1,k},\ldots, E_{m,k})\in \mathscr{A}_m^{s_k}(Q)\,,\\
\chi_{E_{i,k}} \to \chi_H\,,\; \chi_{E_{j,k}} \to \chi_{H^c}\,,\; \text{and } \chi_{E_{l,k}} \to 0 \text{ for } l \notin \{i,j\}  
 \text{ in } L^1(Q) \bigg\}\, .
\end{multline*}
We start with an elementary observation based on \cite{ADPM}, implying in particular that $\boldsymbol{\Gamma}\in\mathscr{S}_m$. 

\begin{lemma}\label{esticonstfirst}
We have 
$$0< \omega_{n-1} \boldsymbol{\sigma}_{\rm min}\leq \Gamma_{ij}=\Gamma_{ji}\leq \omega_{n-1} \boldsymbol{\sigma}_{ij}<\infty\quad\text{for every $i\not=j$}\,.$$
\end{lemma}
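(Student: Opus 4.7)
The plan is to treat the two-sided bound, the symmetry, and the qualitative endpoints separately, in order of increasing difficulty.

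The strict positivity $\omega_{n-1}\boldsymbol{\sigma}_{\min} > 0$ is built into the definition of the class $\mathscr{S}_m$, and $\omega_{n-1}\sigma_{ij} < \infty$ is trivial. For $\Gamma_{ij} = \Gamma_{ji}$, my plan is to use the reflection $R:(x', x_n) \mapsto (x', -x_n)$, which leaves $Q$ invariant, exchanges $H$ and $H^c$, and preserves the kernel $|x-y|^{-(n+2s)}$: any competitor $\{\mathfrak{E}_k\}$ realizing the infimum in $\Gamma_{ij}$ is then transformed by $R$ into an admissible competitor for $\Gamma_{ji}$ with identical $\mathscr{E}_{2s_k}^{\boldsymbol{\sigma}}$-energy (by change of variables), and the reverse inequality follows by the same argument applied to $\Gamma_{ji}$.

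For the upper bound $\Gamma_{ij} \leq \omega_{n-1}\sigma_{ij}$, I would test the infimum with the $k$-independent partition $E_{i,k} = H$, $E_{j,k} = H^c$, $E_{l,k} = \emptyset$ for $l \notin \{i,j\}$. The prescribed $L^1(Q)$-convergences are immediate, and admissibility $\mathfrak{E}_k \in \mathscr{A}_m^{s_k}(Q)$ reduces to $P_{2s_k}(H, Q) < \infty$, which holds because $\partial H$ is smooth and the exterior interactions decay integrably. The energy collapses to
\[
\mathscr{E}_{2s_k}^{\boldsymbol{\sigma}}(\mathfrak{E}_k, Q) = \sigma_{ij}\,\mathcal{I}_{2s_k}(H \cap Q, H^c \cap Q),
\]
and the D\'avila-type asymptotic from \cite{Dav} invoked in \cite{ADPM}, namely
\[
(1 - 2s_k)\,\mathcal{I}_{2s_k}(H \cap Q, H^c \cap Q) \longrightarrow \omega_{n-1}\,\mathrm{Per}(H, Q) = \omega_{n-1},
\]
closes the bound.

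The lower bound is the part I would handle most carefully. Let $\{\mathfrak{E}_k\}$ be an arbitrary admissible sequence for $\Gamma_{ij}$. Keeping only $\sigma_{ab} \geq \boldsymbol{\sigma}_{\min}$ for $a \neq b$ and using the partition identity $\sum_{b\neq l}\chi_{E_{b,k}} = \chi_{E_{l,k}^c}$, exactly as in the proof of Theorem \ref{equicoercivethm}, one obtains
\[
\mathscr{E}_{2s_k}^{\boldsymbol{\sigma}}(\mathfrak{E}_k, Q) \geq \frac{\boldsymbol{\sigma}_{\min}}{2}\sum_{l=1}^{m}\mathcal{I}_{2s_k}(E_{l,k} \cap Q, E_{l,k}^c \cap Q).
\]
Since $\chi_{E_{i,k}} \to \chi_H$ and $\chi_{E_{j,k}} \to \chi_{H^c}$ in $L^1(Q)$ with $\mathrm{Per}(H, Q) = \mathrm{Per}(H^c, Q) = 1$, the single-phase lower semicontinuity
\[
\liminf_{k \to \infty}\,(1-2s_k)\mathcal{I}_{2s_k}(E_{l,k}\cap Q, E_{l,k}^c\cap Q) \geq \omega_{n-1}
\]
applies for both $l = i$ and $l = j$, while the remaining indices contribute nonnegatively. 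Summing these two contributions and passing to the infimum over competitors delivers $\Gamma_{ij} \geq \omega_{n-1}\boldsymbol{\sigma}_{\min}$. The one delicate point I would flag is that the lower semicontinuity above must be for the \emph{internal} interaction $\mathcal{I}_{2s}(\cdot\cap Q, \cdot^c\cap Q)$ alone, not for the full $P_{2s}(\cdot, Q)$; this is precisely the Bourgain--Brezis--Mironescu--D\'avila statement used in \cite{ADPM}, so no new machinery is required.
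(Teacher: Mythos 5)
Your proof is correct and follows essentially the same route as the paper: the lower bound is obtained by reducing to $\boldsymbol{\sigma}_{\rm min}$ times the sum of the single-phase internal interactions and applying the $\Gamma$-liminf inequality of \cite[Theorem 2]{ADPM} to the chambers indexed by $i$ and $j$, and the upper bound by testing with the constant half-space competitor together with \cite[Lemma 9]{ADPM}. The only (immaterial) difference is the symmetry step, where you reflect through $\{x_n=0\}$ whereas the paper simply relabels the chambers $i\leftrightarrow j$ using the symmetry of $\boldsymbol{\sigma}$; both work.
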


\begin{proof}
Let us fix pair $(i_0,j_0)$ with $i_0\not=j_0$. For sequences $s_k\to 1/2^-$ and  $\{\mathfrak{E}_k\}_{k\in\mathbb{N}}$ as in the definition of $\Gamma_{i_0j_0}$, we have  similarly to the proof of Theorem \ref{equicoercivethm}, 
\begin{multline*}
\mathscr{E}_{2s_k}^{\boldsymbol{\sigma}}(\mathfrak{E}_{k},Q)\geq 
\frac{\boldsymbol{\sigma}_{\rm min}}{2} \sum_{i=1}^m\mathcal{I}_{2s_k}(E_{i,k}\cap Q,E^c_{i,k}\cap Q)\\
\geq  \frac{\boldsymbol{\sigma}_{\rm min}}{2} \mathcal{I}_{2s_k}(E_{i_0,k}\cap Q,E^c_{i_0,k}\cap Q) +  \frac{\boldsymbol{\sigma}_{\rm min}}{2} \mathcal{I}_{2s_k}(E_{j_0,k}\cap Q,E^c_{j_0,k}\cap Q)\,. 
\end{multline*}
According to  \cite[Theorem 2]{ADPM}, we have 
$$\liminf_{k\to\infty}(1-2s_k) \mathcal{I}_{2s_k}(E_{i_0,k}\cap Q,E^c_{i_0,k}\cap Q)\geq \omega_{n-1}{\rm Per}(H,Q)=\omega_{n-1}\,,$$
and 
$$\liminf_{k\to\infty}(1-2s_k) \mathcal{I}_{2s_k}(E_{j_0,k}\cap Q,E^c_{i_0,k}\cap Q)\geq \omega_{n-1}{\rm Per}(H^c,Q)=\omega_{n-1}\,.$$
Therefore, 
$$\liminf_{k\to\infty}(1-2s_k)\mathscr{E}_{2s_k}^{\boldsymbol{\sigma}}(\mathfrak{E}_{k},Q)\geq \omega_{n-1} \boldsymbol{\sigma}_{\rm min}\,, $$
and it follows that $\Gamma_{i_0j_0}\geq \omega_{n-1} \boldsymbol{\sigma}_{\rm min}$. 

On the other hand, for the partition $\mathfrak{E}^*=(E_1^*,\ldots,E_m^*)$ given by $E_{i_0}^*=H$, $E_{j_0}^*=H^c$, and $E_{l}^*=\emptyset$ for $l\not\in\{i_0j_0\}$, we have 
$$\mathscr{E}_{2s}^{\boldsymbol{\sigma}}(\mathfrak{E}^*,Q)= \sigma_{i_0j_0}\,\mathcal{I}_{2s}(H\cap Q,H^c\cap Q)\,.$$ 
A direct computation made in \cite[Lemma 9]{ADPM} leads to  
\begin{equation}\label{convhalfspace}
\lim_{s\to 1/2^-}(1-2s)\, \mathcal{I}_{2s}(H\cap Q,H^c\cap Q)=\lim_{s\to 1/2^-}(1-2s)P_{2s}(H, Q)=\omega_{n-1}\,.
\end{equation}
Hence,
$$\lim_{s\to 1/2^-}(1-2s)\mathscr{E}_{2s}^{\boldsymbol{\sigma}}(\mathfrak{E}^*,Q)= \omega_{n-1}\sigma_{i_0j_0}\,,$$
which leads to $\Gamma_{i_0j_0}\leq \omega_{n-1} \sigma_{i_0j_0}$. 

Finally, since the matrix $\boldsymbol{\sigma}$ is symmetric, one can exchange the role of $i_0$ and $j_0$ in the definition of  $\Gamma_{i_0j_0}$, and it shows that $\Gamma_{i_0j_0}=\Gamma_{j_0i_0}$.
\end{proof}

The lower bound we are seeking for $(\mathscr{P}^{\boldsymbol{\sigma}})_*$ is contained in the following proposition. 

\begin{proposition}\label{proplowerbd1}
Given $s_k\to 1/2$ and a sequence $\mathfrak{E}_k = (E_{1,k}, E_{2,k},...,E_{m,k})\in\mathscr{A}^{s_k}_m(\Omega)$  such that $\chi_{E_{i,k}} \rightarrow \chi_{E_i}$ in $L^1(\Omega)$   for every $i \in \{1,...,m\}$ as $k \rightarrow \infty$, one has 
\begin{equation} \label{Gamma liminf inequality}
\liminf \limits_{k \rightarrow \infty} (1- 2s_k) \mathcal{E}_{2s_k}^{\boldsymbol{\sigma}} (\mathfrak{E}_k,\Omega) \geq \frac{1}{2}\sum \limits_{i ,j=1}^m \Gamma_{ij} \mathcal{H}^{n-1} (\partial^* E_i \cap \partial^* E_j \cap \Omega)=:\mathscr{P}_1^{\boldsymbol{\Gamma}}(\mathfrak{E},\Omega)\,.
\end{equation}
\end{proposition}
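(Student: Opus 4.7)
The plan is to implement the Fonseca--Müller blow-up method, tailored to our nonlocal multi-phase setting in the spirit of \cite{ADPM}. Up to assuming the liminf in \eqref{Gamma liminf inequality} is finite and passing to a subsequence realizing it, introduce the positive Radon measures
\begin{equation*}
\mu_k := (1-2s_k)\,\frac{\chi_\Omega(x)}{2}\sum_{i,j=1}^m \sigma_{ij}\,\chi_{E_{i,k}}(x)\Big(\int_{E_{j,k}\cap\Omega}\frac{dy}{|x-y|^{n+2s_k}}\Big)\,dx
\end{equation*}
on $\Omega$. Then $\mu_k(\Omega)=(1-2s_k)\mathcal{E}_{2s_k}^{\boldsymbol{\sigma}}(\mathfrak{E}_k,\Omega)$ and, crucially, $\mu_k(A)\geq (1-2s_k)\mathcal{E}_{2s_k}^{\boldsymbol{\sigma}}(\mathfrak{E}_k,A)$ for every open $A\subset\Omega$ (just restrict the inner integral in $y$ from $\Omega$ to $A$). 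Up to a further extraction, $\mu_k\xrightharpoonup{*}\mu$ as Radon measures on $\overline{\Omega}$, with $\mu(\Omega)\leq \liminf_k\mu_k(\Omega)$. Setting $\nu:=\sum_{i<j}\mathcal{H}^{n-1}\LL(\partial^*E_i\cap\partial^*E_j\cap\Omega)$, Radon--Nikodym and Besicovitch's differentiation theorem reduce \eqref{Gamma liminf inequality} to the density estimate
\begin{equation*}
\frac{d\mu}{d\nu}(x_0)\geq \Gamma_{ij}\qquad\text{for $\mathcal{H}^{n-1}$-a.e. $x_0\in\partial^*E_i\cap\partial^*E_j\cap\Omega$},
\end{equation*}
for every pair $i<j$.

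Fix such a generic $x_0$, which by standard structure results for Caccioppoli partitions is neither a triple junction nor a point of positive density for any $E_l$ with $l\notin\{i,j\}$. By rotational invariance of the kernel $|x-y|^{-(n+2s)}$, the value of $\Gamma_{ij}$ is unchanged if $(Q,H)$ is replaced by any rotated pair, so we may assume $e_n$ is the measure-theoretic inner normal to $E_i$ at $x_0$. Write $\mathfrak{F}^r:=((E_l-x_0)/r)_{l=1}^m$ and $\mathfrak{F}^r_k:=((E_{l,k}-x_0)/r)_{l=1}^m$, and let $\mathfrak{E}^*$ be the half-space partition with $E_i^*=H$, $E_j^*=H^c$, and all other entries empty. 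Reduced boundary theory for Caccioppoli partitions gives $\mathfrak{F}^r\to\mathfrak{E}^*$ in $[L^1(Q)]^m$ as $r\to 0^+$, while the $L^1(\Omega)$-convergence of $\mathfrak{E}_k$ yields $\mathfrak{F}_k^r\to\mathfrak{F}^r$ in $[L^1(Q)]^m$ as $k\to\infty$ for each fixed $r>0$. A direct change of variables $x=x_0+ru$, $y=x_0+rv$, provides the scaling
\begin{equation*}
\mathcal{E}_{2s_k}^{\boldsymbol{\sigma}}(\mathfrak{E}_k,Q_r(x_0))=r^{n-2s_k}\,\mathcal{E}_{2s_k}^{\boldsymbol{\sigma}}(\mathfrak{F}_k^r,Q).
\end{equation*}

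For $\nu$-a.e. $x_0$, $d\mu/d\nu(x_0)=\lim_{r\to 0^+}\mu(Q_r(x_0))/r^{n-1}$, and for all but countably many $r\in(0,\mathrm{dist}(x_0,\partial\Omega))$ one has $\mu(\partial Q_r(x_0))=0$, hence $\mu_k(Q_r(x_0))\to \mu(Q_r(x_0))$. Given $\epsilon>0$, select such an $r=r_\epsilon$ small enough that $\mu(Q_r(x_0))/r^{n-1}\leq d\mu/d\nu(x_0)+\epsilon$ and $\|\mathfrak{F}^r-\mathfrak{E}^*\|_{[L^1(Q)]^m}<\epsilon$. Combining the scaling identity with the bound $\mu_k(Q_r(x_0))\geq (1-2s_k)\mathcal{E}_{2s_k}^{\boldsymbol{\sigma}}(\mathfrak{E}_k,Q_r(x_0))$ and the fact that $r^{1-2s_k}\to 1$ as $k\to\infty$ for fixed $r>0$, one gets
\begin{equation*}
\liminf_k (1-2s_k)\mathcal{E}_{2s_k}^{\boldsymbol{\sigma}}(\mathfrak{F}_k^r,Q)\leq \frac{d\mu}{d\nu}(x_0)+\epsilon,
\end{equation*}
with $\mathfrak{F}_k^r$ eventually within $\epsilon$ of $\mathfrak{E}^*$ in $[L^1(Q)]^m$. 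A standard diagonal extraction along $\epsilon\to 0^+$ produces a sequence $\mathfrak{G}_k\in\mathscr{A}_m^{s_k}(Q)$ with $\mathfrak{G}_k\to \mathfrak{E}^*$ in $[L^1(Q)]^m$ and $\liminf_k(1-2s_k)\mathcal{E}_{2s_k}^{\boldsymbol{\sigma}}(\mathfrak{G}_k,Q)\leq d\mu/d\nu(x_0)$, whence the very definition of $\Gamma_{ij}$ forces $\Gamma_{ij}\leq d\mu/d\nu(x_0)$. The main technical difficulty throughout is this final diagonal coordination of the three limits $s_k\to 1/2^-$, $r\to 0^+$, and the $L^1$-convergence of the rescaled partitions to the canonical half-space partition $\mathfrak{E}^*$; beyond it, all ingredients are classical.
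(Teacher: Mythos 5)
Your argument is correct in substance and reaches the same pointwise blow-up estimate as the paper, but the two proofs globalize that estimate differently. You run the ``textbook'' Fonseca--M\"uller scheme: introduce the localized energy measures $\mu_k$, pass to a weak-* limit $\mu$ with $\mu(\Omega)\leq\liminf_k\mu_k(\Omega)$, and then apply Besicovitch differentiation of $\mu$ with respect to the interface measure $\nu$, so that everything reduces to the density bound $\tfrac{d\mu}{d\nu}(x_0)\geq\Gamma_{ij}$ at generic points of $\partial^*E_i\cap\partial^*E_j$. The paper instead never forms the measures $\mu_k$: it takes $\mu$ to be the \emph{target} measure $\tfrac12\sum_{i,j}\Gamma_{ij}\mathcal{H}^{n-1}\llcorner(\partial^*E_i\cap\partial^*E_j\cap\Omega)$, works with the set function $\alpha(C)=\liminf_k(1-2s_k)\mathscr{E}^{\boldsymbol{\sigma}}_{2s_k}(\mathfrak{E}_k,C)$ on oriented cubes, proves $\liminf_r\alpha(C_r(x))/\mu(C_r(x))\geq 1$ $\mu$-a.e., and concludes via the Morse/Vitali covering theorem together with superadditivity of $\mathscr{E}^{\boldsymbol{\sigma}}_{2s_k}(\mathfrak{E}_k,\cdot)$. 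Your route buys a cleaner reduction (no fine-covering extraction; the monotone localization $\mu_k(A)\geq(1-2s_k)\mathscr{E}^{\boldsymbol{\sigma}}_{2s_k}(\mathfrak{E}_k,A)$ and the comparison $\limsup_k\mu_k(\overline{Q_r(x_0)})\leq\mu(\overline{Q_r(x_0)})$ do the work), at the cost of an extra subsequence extraction for the weak-* limit and of invoking the generalized Besicovitch theorem for point-dependent oriented cubes — which ultimately rests on the same Morse covering lemma the paper cites. The local ingredients (scaling identity, De Giorgi blow-up of the two adjacent chambers plus vanishing density of the others, and the diagonal coordination of $r\to0$ and $k\to\infty$ against the cell formula for $\Gamma_{ij}$) coincide with the paper's. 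Two points you gloss over: (i) you need each $E_i$ to have locally finite perimeter in $\Omega$ before $\nu$ is a Radon measure and De Giorgi's theorem applies — this follows from Theorem \ref{equicoercivethm}, which you should invoke explicitly (the paper goes further and proves global finiteness of the perimeters via the translation estimate of \cite[Proposition 4]{ADPM}, which your route does not require since your $\mu$ is automatically finite); and (ii) the a.e.\ identity $\tfrac{d\mu}{d\nu}(x_0)=\lim_{r\to0}\mu(Q_r(x_0))/r^{n-1}$ uses that $\nu(Q_r(x_0))/r^{n-1}\to1$ at generic interface points, which is exactly the paper's step \eqref{cpadutou} and deserves a line of justification. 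Neither is a gap, only an omission of standard facts.
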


\begin{proof}
Without loss of generality, we can assume that the left-hand side of \eqref{Gamma liminf inequality} is finite. We first claim that $\mathfrak{E}$ is a Caccioppoli partition of $\Omega$. 
From Theorem \ref{equicoercivethm}, we already know that $\mathfrak{E}$ is partition of $\Omega$, and that each $E_i$ has locally finite perimeter in $\Omega$. It then remains to show that each $E_i$ has actually finite perimeter in $\Omega$. 

Arguing as in the proof of Theorem \ref{equicoercivethm}, we start estimating 
\begin{multline}\label{eq1liminf}
(1-2s_k) \mathscr{E}_{2s_k}^{\boldsymbol{\sigma}} (\mathfrak{E}_k,\Omega)
 \geq \frac{{\boldsymbol{\sigma}_{\rm min}}}{2}  \sum \limits_{i=1}^m (1-2s_k)\,\mathcal{I}_{2s_k}(E_{i,k} \cap \Omega,E^c_{i,k} \cap \Omega)  \\
= \frac{{\boldsymbol{\sigma}_{\rm min}}}{4}  \sum \limits_{i=1}^m (1-2s_k) \iint_{\Omega \times \Omega} \frac{|\chi_{E_{i,k}}(x) - \chi_{E_{i,k}}(y)|}{|x-y|^{n+2s_k}}\, \de x\de y\,.
\end{multline}
Now, let $\Omega'  \Subset \Omega$ an arbitrary open subset, and $h \in \R^n$ such that $|h| < {\rm dist}(\Omega^\prime, \partial \Omega)/2$. We set $\Omega_{|h|}' = \{x \in \R^n :  {\rm dist}(x,\Omega') < |h|\}$. Since $\Omega'_{2|h|} \subseteq \Omega$, we infer from \eqref{eq1liminf} that 
\begin{align}
\nonumber  (1-2s_k) \mathscr{E}_{2s_k}^{\boldsymbol{\sigma}}  (\mathfrak{E}_k,\Omega) & \geq  \frac{{\boldsymbol{\sigma}_{\rm min}}}{4}  \sum \limits_{i=1}^m (1-2s_k) \iint_{\Omega'_{|h|}\times \Omega'_{2|h|}} \frac{|\chi_{E_{i,k}}(x) - \chi_{E_{i,k}}(y)|}{|x-y|^{n+2s_k}} \de x\de y \\
\nonumber & \geq \frac{{\boldsymbol{\sigma}_{\rm min}}}{4}  \sum \limits_{i=1}^m (1-2s_k)  \int_{\Omega'_{|h|}} \Big(\int_{B_{|h|}(x)} \frac{|\chi_{E_{i,k}}(x) - \chi_{E_{i,k}}(y)|}{|x-y|^{n+2s_k}} \de y\Big) \de x \\
& = \frac{{\boldsymbol{\sigma}_{\rm min}}}{4}  \sum \limits_{i=1}^m (1-2s_k)\int_{B_{|h|}} \frac{\|\tau_{\xi} \chi_{E_{i,k}}- \chi_{E_{i,k}} \|_{L^1(\Omega'_{|h|})}}{|\xi|^{n+2s_k}} \de\xi\,, \label{estiliminf2}
\end{align}
where $\tau_\xi$ denotes the translation by the vector $\xi$, i.e., $\tau_\xi u(x) := u(x + \xi)$. According to \cite[Proposition 4]{ADPM}, we have  
\begin{equation}\label{estitransAmbrDeP}
 \frac{\|\tau_h \chi_{E_{i,k}}- \chi_{E_{i,k}} \|_{L^1(\Omega')} }{|h|^{2s_k}}\leq C (1-2s_k) \int_{B_{|h|}} \frac{\|\tau_{\xi} \chi_{E_{i,k}}- \chi_{E_{i,k}} \|_{L^1(\Omega'_{|h|})}}{|\xi|^{n+2s_k}} \de\xi\,,
 \end{equation}
for a constant $C$ depending only on the dimension $n$. Inserting \eqref{estitransAmbrDeP} in \eqref{estiliminf2} yields 
$$ \sum_{i=1}^m\frac{\|\tau_h \chi_{E_{i,k}}- \chi_{E_{i,k}} \|_{L^1(\Omega')}}{|h|^{2s_k}} \leq C \boldsymbol{\sigma}^{-1}_{\rm min}(1-2s_k) \mathscr{E}_{2s_k}^{\boldsymbol{\sigma}}  (\mathfrak{E}_k,\Omega)\,.$$
Then, letting $k \rightarrow \infty$ leads to 
$$ \sum_{i=1}^m \frac{\|\tau_h \chi_{E_i}- \chi_{E_i} \|_{L^1(\Omega')}}{|h|} \leq C \boldsymbol{\sigma}^{-1}_{\rm min}\liminf_{k\to\infty}(1-2s_k) \mathscr{E}_{2s_k}^{\boldsymbol{\sigma}}  (\mathfrak{E}_k,\Omega)<\infty\,,$$
for a constant $C$ depending only on $n$. Since $|h|$ can be chosen arbitrily small, we conclude that  
$$ \sum_{i=1}^m {\rm Per}(E_i,\Omega^\prime)\leq C \boldsymbol{\sigma}^{-1}_{\rm min}\liminf_{k\to\infty}(1-2s_k) \mathscr{E}_{2s_k}^{\boldsymbol{\sigma}}  (\mathfrak{E}_k,\Omega)\,.$$
By arbitrariness of $\Omega^\prime$, it implies that each $E_i$ has finite perimeter in $\Omega$.
\vskip5pt

Now we consider the measure on $\Omega$  given by
 $$\mu  :=\frac{1}{2} \sum \limits_{i,j=1}^m \Gamma_{ij} \mathcal{H}^{n-1}_{\LL(\partial^*E_i \cap \partial^*E_j \cap \Omega)}\,.$$ 
 Note that, by Lemma \ref{esticonstfirst}, the measure $\mu$ is finite and absolutely continuous with respect to the finite measure $\mathcal{H}^{n-1}_{\LL(\bigcup_i\partial^*E_i \cap \Omega)}$. 
 On the other hand, by a well known property of Cacciopoli partitions, see e.g. \cite[Chapter 4, Section 4.4]{AFP}, there is a $\mathcal{H}^{n-1}$-negligible set $N\subset \Omega$ such that 
 for every $x\in \big(\bigcup_{i}\partial^*E_i\cap\Omega\big)\setminus N$, there exits a unique pair $(i_x,j_x)\subset\{1,\ldots,m\}$ with $i_x\not=j_x$ such that 
 $$x\in \partial^*E_{i_x}\cap\partial^*E_{j_x}\quad\text{and}\quad x\in E_l^0\text{ for } l\not\in\{i_x,j_x\}\,,$$
 where $E_l^0$ denotes the set of points of $E_l$ with $0$ $n$-dimensional density.  
 Since $|E_{i_x}\cap E_{j_x}|=0$, by De Giorgi's Theorem, see \cite[Theorem 3.59]{AFP}, there exists 
 $R_x \in SO(n)$ such that 
 \begin{equation}\label{blowupgamlinf}
 \chi_{(E_{i_x})_{x,r}} \to \chi_{R_xH} \quad\text{and}\quad \chi_{(E_{j_x})_{x,r}} \to \chi_{(R_xH)^c}\quad\text{in $L^1_{\rm loc}(\R^n)$ as $r\to0$}\,,
 \end{equation}
 and for $l\not\in\{i_x,j_x\}$, 
 \begin{equation}\label{blowupgamlinfbis}
 \chi_{(E_{l})_{x,r}} \to 0 \quad\text{in $L^1_{\rm loc}(\R^n)$ as $r\to0$}\,,
  \end{equation}
 where 
 $$(E_i)_{x,r}:= \frac{E_i-x}{r}\,.$$
 Since $\mathcal{H}^{n-1}_{\LL\partial^*E_l\cap\Omega}(E_l^0)=0$ (see e.g.  \cite[Theorem 3.61]{AFP}) and  
 according to \cite[(2.41)]{AFP}, adding to $N$ a $\mathcal{H}^{n-1}$-negligible set if necessary, we can also assume that 
 \begin{equation}\label{vanishn-1dens}
 \lim_{r\to 0}\frac{\mathcal{H}^{n-1}(\partial^*E_l\cap(x+rR_xQ))}{r^{n-1}}=0\quad \text{for $l\not\in\{i_x,j_x\}$}\,. 
 \end{equation}

We now claim  that 
\begin{equation} \label{outil57}
\lim_{r \rightarrow 0} \frac{\mu(x + rR_xQ)}{r^{n-1}} = \Gamma_{i_xj_x}\quad \text{for every } x\in \Big(\bigcup_{i=1}^m\partial^*E_i\cap\Omega\Big)\setminus N\,.
\end{equation}
Indeed, for such a point $x$, we have  by \eqref{blowupgamlinf} and \cite[Theorem 3.59]{AFP}, 
\begin{equation}\label{n-1densityblowup}
\lim_{r\to 0}\frac{\mathcal{H}^{n-1}(\partial^*E_{i_x}\cap(x+rR_xQ))}{r^{n-1}} =1\,.
\end{equation}
Since $\mathfrak{E}=(E_1,\ldots,E_m)$ is a partition of $\Omega$, it follows from  \cite[Theorem II.5.3]{Maggi} (see also \cite[Proof of Proposition 2.2]{Bal}) that 
$$\partial^*E_{i_x}\cap\Omega= \big(\bigcup_{j\not = {i_x}} \partial^*E_{i_x}\cap\partial^*E_j\cap\Omega\big)\cup \widetilde N\,,$$
where  $\widetilde N$ is a $\mathcal{H}^{n-1}$-negligible set. 
As a consequence, 
\begin{multline*}
\frac{\mathcal{H}^{n-1}(\partial^*E_{i_x}\cap(x+rR_xQ))}{r^{n-1}}\leq \frac{\mathcal{H}^{n-1}(\partial^*E_{i_x}\cap \partial^*E_{j_x}\cap(x+rR_xQ))}{r^{n-1}}\\
+\sum_{l\not\in\{i_x,j_x\}}\frac{\mathcal{H}^{n-1}(\partial^*E_{l}\cap(x+rR_xQ))}{r^{n-1}}\,,
\end{multline*}
and it follows from \eqref{vanishn-1dens} and \eqref{n-1densityblowup} that 
\begin{equation}\label{cpadutou}
\lim_{r\to 0} \frac{\mathcal{H}^{n-1}(\partial^*E_{i_x}\cap \partial^*E_{j_x}\cap(x+rR_xQ))}{r^{n-1}} = 1\,.
\end{equation}
Combining \eqref{cpadutou} with \eqref{vanishn-1dens} again, we derive that 
$$\lim_{r \rightarrow 0} \frac{\mu(x + rR_xQ)}{r^{n-1}} =\frac{1}{2}(\Gamma_{i_xj_x}+\Gamma_{j_xi_x}) = \Gamma_{i_xj_x}\,,$$
and \eqref{outil57} is proved. 
\vskip5pt

In the sequel, we denote  by $\mathcal{C}$ the family of all closed $n$-dimensional cubes in $\R^n$, that is  
$$ \mathcal{C} \vcentcolon = \big\{R(x + r\bar{Q}) : x \in \R^n\,,\; r > 0\,,\; R \in SO(n)\big\}\,. $$
For a closed cube $C \in \mathcal{C}$ such that $C\subset\Omega$, we define
$$ \alpha_k(C) \vcentcolon = (1 - 2s_k)\mathcal{E}_{2s_k}^{\boldsymbol{\sigma}}(\mathfrak{E}_k,C) \quad \text{and} \quad \alpha(C) \vcentcolon = \liminf \limits_{k \rightarrow \infty} \alpha_k(C)\,.$$
We set  $C_r(x)\vcentcolon = x + rR_x\bar{Q}$ with $R_x$ as in \eqref{outil57} if $x\in  \big(\bigcup_{i}\partial^*E_i\cap\Omega\big)\setminus N$, and $R_x=I_{\rm d}$ otherwise. 
We claim that
\begin{equation} \label{outil58}
\liminf_{r \rightarrow 0} \frac{\alpha(C_r(x))}{\mu(C_r(x))} \geq  1 \quad\text{for $\mu$-a.e. $x\in\Omega$}\,.
\end{equation}
We postpone the proof \eqref{outil58} and complete the proof of the lemma following the argument in \cite[Proof of Lemma 7]{ADPM}. 
By \eqref{outil58}, for any $\varepsilon > 0$, the family
$$ \mathcal{A}_\varepsilon \vcentcolon = \big\{ C_r(x) \subset \Omega :  (1 + \varepsilon) \alpha (C_r(x)) \geq  \mu(C_r(x))  \big\} $$
is a fine covering of $\mu$-almost all $\Omega$. By the variant of Vitali's theorem used in \cite{ADPM} (see \cite[part~3]{Morse}, the variant here concerns the fact that we are using cubes instead of balls in Vitali's covering theorem), we can extract a countable subfamily of disjoint cubes
$$ \big\{C_l \subset \Omega : l \in L_\varepsilon \big\} \subset \mathcal{A}_\varepsilon\text{ such that } \mu\Big(\Omega \backslash \bigcup \limits_{l \in L_\varepsilon} C_l\Big)= 0\,.$$
Then, 
\begin{multline*}
\mu(\Omega)  = \mu \Big(\bigcup \limits_{l \in L_\varepsilon}C_l\Big)  = \sum \limits_{l \in L_\varepsilon} \mu (C_l) \leq  (1 + \varepsilon) \sum \limits_{l \in L_\varepsilon} \alpha (C_l) \\
 \leq  (1 + \varepsilon) \liminf \limits_{k \rightarrow \infty} \sum \limits_{l \in L_\varepsilon} \alpha_k(C_l) 
 \leq (1 + \varepsilon) \liminf \limits_{k \rightarrow \infty} (1 - 2s_k) \mathcal{E}_{2s_k}^{\boldsymbol{\sigma}} (\mathfrak{E}_k,\Omega)\,.
\end{multline*}
In the last inequality, we used the fact that the cubes $C_l$ are pairwise disjoint and included in~$\Omega$, and that $\mathcal{E}_{2s_k}^{\boldsymbol{\sigma}} (\mathfrak{E}_k,\cdot)$ is a super additive set function.  Since $\varepsilon$ is arbitrary, this concludes the proof of the lemma. 
\vskip5pt

Now we  return to the proof of \eqref{outil58}. Since $\mu$ is absolutely continuous with respect to the measure $\mathcal{H}^{n-1}_{\LL(\bigcup_i\partial^*E_i \cap \Omega)}$, we have 
$$\mu\Big(\Omega\setminus \bigcup_i\partial^*E_i\Big) =0\quad\text{and}\quad \mu(N)=0\,.$$
Hence, it is enough to prove  \eqref{outil58} for $x\in \Big(\bigcup_{i=1}^m\partial^*E_i\cap\Omega\Big)\setminus N$. In view of \eqref{outil57}, it reduces to show that 
\begin{equation} \label{outil59}
\liminf \limits_{r \rightarrow 0} \frac{\alpha (C_r(x))}{r^{n-1}} \geq   \Gamma_{i_xj_x} \quad \text{for every } x\in \Big(\bigcup_{i=1}^m\partial^*E_i\cap\Omega\Big)\setminus N\,.
\end{equation}
For the rest of the proof, we then fix a point  $x\in \Big(\bigcup_{i}\partial^*E_i\cap\Omega\Big)\setminus N$, and we  assume without loss of generality that the rotation $R_x$ in \eqref{blowupgamlinf} is the identity matrix. 
We start choosing a sequence $r_p \rightarrow 0 $ such that
$$ \liminf \limits_{r \rightarrow 0} \frac{\alpha(C_r(x))}{r^{n-1}} = \lim \limits_{p \rightarrow \infty} \frac{\alpha(C_{r_p}(x))}{r_p^{n-1}}\, .$$
By \eqref{blowupgamlinf}, for every $p \in\mathbb{N}$, we can find an integer $ k(p)\geq p$ large enough so that the following conditions holds,
\begin{equation} \label{outil60}
\left\{
    \begin{array}{ll}
         \alpha_{k(p)}(C_{r_p}(x)) \leq \alpha(C_{r_p}(x)) + r_p^n \,,\\[8pt]
\displaystyle         r_p^{1-2s_{k(p)}} \geq 1 - \frac{1}{p} \,,\\[10pt]
\displaystyle         \frac{1}{r_p^n}\int_{C_{r_p}(x)} |\chi_{E_{i,k(p)}} - \chi_{E_{i}}|\, \de y < \frac{1}{p} \;\text{ for every $i\in\{1,\ldots,m\}$}\,. 
  \end{array}
\right.
\end{equation}
From \eqref{outil60} and a change of variables, it follows that 
\begin{align}
\nonumber \frac{\alpha(C_{r_p}(x))}{r_p^{n-1}} & \geq \frac{\alpha_{k(p)}(C_{r_p}(x))}{r_p^{n-1}} - r_p \\
\nonumber & = \frac{1}{r_p^{n-1}}(1-2s_{k(p)}) \mathcal{E}_{2s_{k(p)}}^{\boldsymbol{\sigma}}(\mathfrak{E}_{k(p)},C_{r_p}(x)) - r_p \\
\nonumber & = r_p^{1-2s_{k(p)}}  (1-2s_{k(p)}) \mathcal{E}_{2s_{k(p)}}^{\boldsymbol{\sigma}} \left((\mathfrak{E}_{k(p)})_{x,r_p}, Q \right) - r_p\\
\label{estiinfcpa} & \geq (1 - \frac{1}{p}) (1- 2s_{k(p)}) \mathcal{E}_{2s_{k(p)}}^{\boldsymbol{\sigma}} \left((\mathfrak{E}_{k(p)})_{x,r_p}, Q \right) - r_p\,, 
\end{align}
where we have set 
$$(\mathfrak{E}_{k(p)})_{x,r_p}:=\big((E_{1,k(p)})_{x,r_p},\ldots,(E_{m,k(p)})_{x,r_p}\big)\in\mathscr{A}_m^{s_{k(p)}}(Q)\,.$$
Passing to the limit $p \rightarrow \infty$ in \eqref{estiinfcpa} yields 
\begin{equation} 
\lim \limits_{p \rightarrow \infty} \frac{\alpha(C_{r_p}(x))}{r_p^{n-1}}  \geq \liminf \limits_{p \rightarrow \infty} (1 - 2s_{k(p)})  \mathcal{E}_{2s_{k(p)}}^{\boldsymbol{\sigma}} \left((\mathfrak{E}_{k(p)})_{x,r_p}, Q \right) \,.
\end{equation}\label{gfaim}
From the third equation in \eqref{outil60}, \eqref{blowupgamlinf} and \eqref{blowupgamlinfbis}, we infer that 
$$ \int_Q |\chi_{(E_{i_x,k(p)})_{x,r_p}} - \chi_H| \,\de y \leq  \int_Q |\chi_{(E_{i_x})_{x,r_p}} - \chi_H| \,\de y +  \frac{1}{r_p^n}\int_{C_{r_p}(x)} |\chi_{E_{i_x,k(p)}} - \chi_{E_{i_x}}|\, \de y 
\mathop{\longrightarrow}\limits_{p\to\infty}0\,,$$
$$ \int_Q |\chi_{(E_{j_x,k(p)})_{x,r_p}} - \chi_{H^c}| \,\de y \leq  \int_Q |\chi_{(E_{j_x})_{x,r_p}} - \chi_{H^c}| \,\de y +  \frac{1}{r_p^n}\int_{C_{r_p}(x)} |\chi_{E_{j_x,k(p)}} - \chi_{E_{j_x}}|\, \de y 
\mathop{\longrightarrow}\limits_{p\to\infty}0\,,$$
and 
$$ \big| (E_{l,k(p)})_{x,r_p}\cap Q\big| \leq \big| (E_{l})_{x,r_p}\cap Q\big| +  \frac{1}{r_p^n}\int_{C_{r_p}(x)} |\chi_{E_{l,k(p)}} - \chi_{E_{l}}|\, \de y 
\mathop{\longrightarrow}\limits_{p\to\infty}0\quad\text{for $l\not\in\{i_x,j_x\}$}\,.$$
By construction, $s_{k(p)}\to 1/2$, and it follows from \eqref{gfaim} and the very definition of $\Gamma_{i_x,j_x}$ that 
$$ \lim \limits_{p \rightarrow \infty} \frac{\alpha(C_{r_p}(x))}{r_p^{n-1}} \geq \Gamma_{i_xj_x}\,, $$
which completes the proof.
\end{proof}

As announced,  Proposition \ref{proplowerbd1} yields 

\begin{corollary}
For every partition $\mathfrak{E}$ of $\Omega$ by measurable sets, we have 
$$({\mathscr{P}}^{\boldsymbol{\sigma}})_*(\mathfrak{E},\Omega)\geq  \mathscr{P}_1^{\boldsymbol{\Gamma}}(\mathfrak{E},\Omega)\,.$$
\end{corollary}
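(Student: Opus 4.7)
The plan is to derive this corollary as a direct consequence of Proposition \ref{proplowerbd1}, exploiting only the non-negativity of the boundary interaction term $\mathscr{F}^{\boldsymbol{\sigma}}_{2s}$.

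First, I would fix an arbitrary partition $\mathfrak{E}$ of $\Omega$ by measurable sets and pick any admissible sequence realizing $({\mathscr{P}}^{\boldsymbol{\sigma}})_*(\mathfrak{E},\Omega)$, that is, a sequence $s_k\to 1/2^-$ together with $\mathfrak{E}_k=(E_{1,k},\ldots,E_{m,k})\in\mathscr{A}_m^{s_k}(\Omega)$ such that $\chi_{E_{i,k}}\to\chi_{E_i}$ in $L^1(\Omega)$ for every $i$. By the very definition of the splitting $\mathscr{P}_{2s}^{\boldsymbol{\sigma}}=\mathscr{E}_{2s}^{\boldsymbol{\sigma}}+\mathscr{F}_{2s}^{\boldsymbol{\sigma}}$, and since the interaction integrals defining $\mathscr{F}_{2s}^{\boldsymbol{\sigma}}$ are non-negative (each $\sigma_{ij}\geq 0$ and $\mathcal{I}_{2s}\geq 0$), one has
$$(1-2s_k)\mathscr{P}_{2s_k}^{\boldsymbol{\sigma}}(\mathfrak{E}_k,\Omega)\;\geq\;(1-2s_k)\mathscr{E}_{2s_k}^{\boldsymbol{\sigma}}(\mathfrak{E}_k,\Omega)\,.$$

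Then I would apply Proposition \ref{proplowerbd1} to the sequence $\{\mathfrak{E}_k\}_{k\in\mathbb{N}}$, which is legitimate since the $L^1(\Omega)$-convergence $\chi_{E_{i,k}}\to\chi_{E_i}$ is exactly the hypothesis required there. This yields
$$\liminf_{k\to\infty}(1-2s_k)\mathscr{P}_{2s_k}^{\boldsymbol{\sigma}}(\mathfrak{E}_k,\Omega)\;\geq\;\liminf_{k\to\infty}(1-2s_k)\mathscr{E}_{2s_k}^{\boldsymbol{\sigma}}(\mathfrak{E}_k,\Omega)\;\geq\;\mathscr{P}_1^{\boldsymbol{\Gamma}}(\mathfrak{E},\Omega)\,.$$

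Finally, since the right-hand side $\mathscr{P}_1^{\boldsymbol{\Gamma}}(\mathfrak{E},\Omega)$ depends only on $\mathfrak{E}$ and not on the chosen approximating sequence, I would take the infimum over all admissible sequences $(s_k,\mathfrak{E}_k)$ on the left, obtaining $({\mathscr{P}}^{\boldsymbol{\sigma}})_*(\mathfrak{E},\Omega)\geq \mathscr{P}_1^{\boldsymbol{\Gamma}}(\mathfrak{E},\Omega)$. There is no real obstacle here: the entire content has been absorbed into Proposition \ref{proplowerbd1}, and the corollary just records the passage from the energy $\mathscr{E}$ to the full perimeter $\mathscr{P}$ together with the passage from a fixed sequence to the infimum.
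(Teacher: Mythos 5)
Your proposal is correct and is exactly the (implicit) argument the paper intends: the corollary is stated as an immediate consequence of Proposition \ref{proplowerbd1}, obtained by discarding the non-negative exterior term $\mathscr{F}^{\boldsymbol{\sigma}}_{2s_k}$ and taking the infimum over admissible sequences. The only cosmetic caveat is that the infimum defining $({\mathscr{P}}^{\boldsymbol{\sigma}})_*$ need not be attained, but since your bound holds for every admissible sequence this does not affect the conclusion.
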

\vskip5pt

\subsection{The Gamma$-\limsup $ inequality}\label{subsecGLS}

The purpose of this section is to prove the upper inequality in the $\Gamma$-convergence statement of Theorem \ref{GCthm}. We shall start with an elementary but useful 
description of the matrix $\bar{\boldsymbol{\sigma}}$. The upper bound for the $\Gamma-\limsup$ functional is presented in the next subsection.

\subsubsection{The relaxed coefficients}

We define the $m\times m$ matrix $\bar{\boldsymbol{\sigma}}=(\bar\sigma_{ij})\in\mathscr{S}_m$ by  setting $\bar\sigma_{ii}=0$, and 
\begin{equation}\label{defsigmabar}
\bar\sigma_{ij}:=\inf\bigg\{\sum_{h=0}^{H-1}\sigma_{i_hi_{h+1}} : H\in\mathbb{N}\setminus\{0\}\,, \{i_h\}_{h=0}^H\subset\{1,\ldots,m\}\,,i_0=i\text{ and }i_H=j\bigg\}\,. 
\end{equation}

According to the next lemma, the resulting matrix $\bar{\boldsymbol{\sigma}}$ is precisely  the largest matrix (componentwise) below $\boldsymbol{\sigma}$ satisfying the triangle inequality. 

\begin{lemma}\label{lemmrelaxcoeff}
For every $i\not=j$, the infimum in \eqref{defsigmabar}  is achieved by a path  $\{i_h\}_{h=0}^H\subset\{1,\ldots,m\}$ satisfying $H\leq m-1$ and $i_h\not=i_{h^\prime}$ for $h\not=h^\prime$. 
In addition, $\bar\sigma_{ij}\leq \sigma_{ij}$ for every $\{i,j\}\subset\{1,\ldots,m\}$, and 
\begin{equation}\label{traingineqlem}
\bar \sigma_{ij} \leq \bar \sigma_{ik}+\bar \sigma_{kj} \quad\text{for every $\{i,j,k\}\subset\{1,\ldots,m\}$}\,.
\end{equation}
Moreover, for every $\widetilde{\boldsymbol{\sigma}}=(\widetilde\sigma_{ij})\in \mathscr{S}_m$ satisfying \eqref{traingineqlem} and $\widetilde\sigma_{ij}\leq \sigma_{ij}$ for every $\{i,j\}\subset\{1,\ldots,m\}$, one has $\widetilde\sigma_{ij}\leq \bar \sigma_{ij}$ for every $\{i,j\}\subset\{1,\ldots,m\}$. In particular, $\bar\sigma_{ij}\geq \boldsymbol{\sigma}_{\rm min}$ for every $i\not=j$. 
\end{lemma}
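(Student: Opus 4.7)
The plan is to establish the four assertions essentially independently, the only nontrivial technical point being the reduction to simple paths.

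\smallskip

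First I would prove that the infimum in \eqref{defsigmabar} is attained by a simple path of length at most $m-1$, by a standard loop-removal argument. Starting from any admissible path $\{i_h\}_{h=0}^H$ from $i$ to $j$, suppose there exist indices $h<h'$ with $i_h=i_{h'}$. Then the shortened path $i_0,\ldots,i_h,i_{h'+1},\ldots,i_H$ remains admissible (same endpoints), and its cost decreases by $\sum_{l=h}^{h'-1}\sigma_{i_l i_{l+1}}\geq 0$. Iterating this reduction (finitely many times since the length strictly decreases) yields a path with pairwise distinct $i_h$'s and no larger cost. Such a path necessarily satisfies $H\leq m-1$, and there are finitely many of them, so the infimum is a minimum realised by some simple path.

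\smallskip

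Next, taking the trivial one-edge path $(i,j)$ in \eqref{defsigmabar} immediately gives $\bar\sigma_{ij}\leq\sigma_{ij}$. For the triangle inequality \eqref{traingineqlem}, if $\{i_h\}_{h=0}^H$ is a minimising path from $i$ to $k$ and $\{j_l\}_{l=0}^L$ a minimising path from $k$ to $j$, their concatenation is an admissible path from $i$ to $j$ of cost $\bar\sigma_{ik}+\bar\sigma_{kj}$, whence $\bar\sigma_{ij}\leq \bar\sigma_{ik}+\bar\sigma_{kj}$. Note that this also proves symmetry $\bar\sigma_{ij}=\bar\sigma_{ji}$ by reversing paths, so $\bar{\boldsymbol{\sigma}}\in\mathscr{S}_m$.

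\smallskip

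For the maximality property, fix $\widetilde{\boldsymbol{\sigma}}\in\mathscr{S}_m$ satisfying the triangle inequality and $\widetilde\sigma_{ij}\leq\sigma_{ij}$. Iterating the triangle inequality along any admissible path $\{i_h\}_{h=0}^H$ from $i$ to $j$ gives
$$\widetilde\sigma_{ij}=\widetilde\sigma_{i_0 i_H}\leq \sum_{h=0}^{H-1}\widetilde\sigma_{i_h i_{h+1}}\leq \sum_{h=0}^{H-1}\sigma_{i_h i_{h+1}}\,.$$
Taking the infimum over all such paths yields $\widetilde\sigma_{ij}\leq \bar\sigma_{ij}$. Finally, the bound $\bar\sigma_{ij}\geq\boldsymbol{\sigma}_{\min}$ for $i\neq j$ follows from the simple-path reduction: on a simple path from $i$ to $j$ with $i\neq j$, there is at least one edge, and every edge contributes $\sigma_{i_h i_{h+1}}\geq \boldsymbol{\sigma}_{\min}>0$ since consecutive entries of a simple path are distinct.

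\smallskip

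The only step with any real content is the first one; the rest is formal and I do not expect any obstacle there.
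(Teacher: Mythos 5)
Your proposal is correct and follows essentially the same route as the paper: loop removal to reduce to simple paths (hence a finite set of competitors and an attained infimum), the trivial one-edge path for $\bar\sigma_{ij}\leq\sigma_{ij}$, concatenation of optimal paths for the triangle inequality, and iteration of the triangle inequality along a path for the maximality property. The only cosmetic difference is at the very end: the paper obtains $\bar\sigma_{ij}\geq\boldsymbol{\sigma}_{\rm min}$ by applying the maximality property to the constant matrix with entries $\boldsymbol{\sigma}_{\rm min}$, whereas you read it off directly from the simple-path representation; both are equally valid.
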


\begin{proof}
{\it Step 1.} We claim that the infimum in \eqref{defsigmabar} can be restricted to admissible paths $\{i_h\}_{h=0}^H\subset\{1,\ldots,m\}$ satisfying $i_h\not=i_{h^\prime}$ for $h\not=h^\prime$. For such a path, one obviously have $H\leq m-1$, so that the infimum in  \eqref{defsigmabar}  can be taken over a finite set, and it is thus achieved. 

To prove the claim, we consider $\{i_h\}_{h=0}^H\subset\{1,\ldots,m\}$  an admissible path for \eqref{defsigmabar}, and we assume that there exists $h<h^\prime$ such that $i_h=i_{h^\prime}$. Then we consider $\{j_k\}_{k=0}^{H-(h^\prime-h)}\subset\{1,\ldots,m\}$ given by 
$$j_k:=\begin{cases}
i_k & \text{for $k\leq h$}\,,\\
i_{k+(h^\prime-h)} & \text{for $k\geq h+1$}\,.
\end{cases} $$
By construction $\{j_k\}_{k=0}^{H-(h^\prime-h)}$ is an admissible path for \eqref{defsigmabar}. Since $j_h=i_h=i_{h+(h^\prime-h)}$, we have 
\begin{align*}
\sum_{k=0}^{H-(h^\prime-h)-1}\sigma_{j_kj_{k+1}}& =\sum_{k=0}^{h-1}\sigma_{i_ki_{k+1}}+\sum_{k=h}^{H-(h^\prime-h)-1}\sigma_{i_{k+h^\prime-h}i_{k+1+h^\prime-h}}\\
&=\sum_{k=0}^{h-1}\sigma_{i_ki_{k+1}}+\sum_{k=h^\prime}^{H-1}\sigma_{i_ki_{k+1}}\\
&\leq \sum_{k=0}^{H-1} \sigma_{i_ki_{k+1}}\,.
\end{align*}
Iterating this procedure (finitely many times), we obtain a new admissible path $\{\tilde \iota_k\}_{k=0}^{\tilde H}$  for \eqref{defsigmabar} satisfying $\tilde\iota_k\not=\tilde \iota_{k^\prime}$ for $k\not=k^\prime$ and
$$\sum_{k=0}^{\tilde H-1} \sigma_{\tilde\iota_k\tilde\iota_{k+1}}\leq \sum_{k=0}^{H-1} \sigma_{i_ki_{k+1}}\,,$$
which proves the claim. 
\vskip5pt

\noindent {\it Step 2.} Considering the trivial path $\{i,j\}$ which is admissible for \eqref{defsigmabar}, we obtain that $\bar\sigma_{ij}\leq \sigma_{ij}$. 
We now prove \eqref{traingineqlem} fixing a triplet $\{i,j,k\}\subset\{1,\ldots,m\}$, and considering an optimal path $\{i_h\}_{h=0}^{H_1}$ for $\bar \sigma_{ik}$ and an optimal path  
$\{j_h\}_{h=0}^{H_2}$ for $\bar \sigma_{kj}$. Then the path $\{\tilde \iota\}_{h=0}^{H_1+H_2}$ given by 
$$\tilde\iota_{h} :=\begin{cases}
i_h & \text{for $h\leq H_1$}\,,\\
j_{h-H_1} &  \text{for $h\geq H_1+1$}\,,
\end{cases}$$
is admissible for $\bar \sigma_{ij}$, so that (since $i_{H_1}=j_0=k$)
$$\bar\sigma_{ij}\leq \sum_{h=0}^{H_1-1}\sigma_{\tilde\iota_{h}\tilde\iota_{h+1}} +\sum_{h=H_1}^{H_1+H_2-1} \sigma_{\tilde\iota_{h}\tilde\iota_{h+1}}  
= \sum_{h=0}^{H_1-1}\sigma_{i_{h}i_{h+1}} +\sum_{h=0}^{H_2-1} \sigma_{j_{h}j_{h+1}} =\bar \sigma_{ik}+\sigma_{kj}\,.$$

\noindent{\it Step 3.} Finally, let us consider $\widetilde{\boldsymbol{\sigma}}=(\widetilde\sigma_{ij})\in \mathscr{S}_m$ satisfying \eqref{traingineqlem} and $\widetilde\sigma_{ij}\leq \sigma_{ij}$ for every $\{i,j\}\subset\{1,\ldots,m\}$. For a  pair $(i,j)$, we consider an optimal path  $\{i_h\}_{h=0}^{H}$ for $\bar \sigma_{ij}$. Since  $\widetilde{\boldsymbol{\sigma}}$ satisfies the triangle inequality \eqref{traingineqlem}, we have 
$$\bar\sigma_{ij}= \sum_{h=0}^{H-1}\sigma_{i_hi_{h+1}}\geq  \sum_{h=0}^{H-1}\widetilde \sigma_{i_hi_{h+1}}\geq \widetilde \sigma_{i_0i_{H}}=\widetilde \sigma_{ij}\,.$$
In particular, this holds for the matrix $\widetilde{\boldsymbol{\sigma}}$ with identical coefficients $\widetilde \sigma_{ij}=\boldsymbol{\sigma}_{\rm min}$ for $i\not=j$, 
showing that $\bar\sigma_{ij}\geq\boldsymbol{\sigma}_{\rm min}$, 
and the proof is complete. 
\end{proof}

\subsubsection{The $\Gamma$-$\limsup$ functional}

Throughout this subsection, we fix an arbitrary sequence $s_k\to 1/2^-$, and we consider the $\Gamma(L^1(\Omega))-\limsup$ functional along $\{s_k\}$ of  $(1-2s)\mathscr{P}^{\boldsymbol{\sigma}}_{2s}(\cdot,\Omega)$. It is defined for a partition $\mathfrak{E}=(E_1,\ldots,E_m)$ of $\Omega$ made of measurable sets by  
\begin{multline*}
({\mathscr{P}}^{\boldsymbol{\sigma}})^*(\mathfrak{E},\Omega):=\inf\Big\{\limsup_{k\to\infty}\,(1-2s_k)\mathscr{P}^{\boldsymbol{\sigma}}_{2s_k}(\mathfrak{E}_k,\Omega):\mathfrak{E}_k=(E_{1,k},\ldots,E_{m,k})\in\mathscr{A}_m^{s_k}(\Omega)\,,\\
\chi_{E_{i,k}}\to \chi_{E_i}\text{ in $L^1(\Omega)$}\;\forall i\in\{1,\ldots,m\}\Big\}\,.
\end{multline*}
Our aim now is to obtain suitable upper bounds for the value of $({\mathscr{P}}^{\boldsymbol{\sigma}})^*(\mathfrak{E},\Omega)$. The final estimate will be provided by Proposition \ref{Gamma lim supbis} at the end of this subsection. 
We shall  start with a restrictive class of partitions made of polyhedral sets. More precisely, we introduce 
\begin{multline*}
\mathscr{B}_m(\Omega):= \Big\{\Pi=(\Pi_1,\ldots,\Pi_m): \text{each } \Pi_i\subset\R^n\text{ is a polyhedron}\,,\;\Pi_i\cap\Pi_j=\emptyset\text{ for $i\not=j$}\,,\\
|\R^n\setminus\bigcup_i\Pi_i|=0\,,\;\mathcal{H}^{n-1}(\partial\Pi_i\cap\partial\Omega)=0\;\forall i\in\{1,\ldots,m\} \Big\}\,.
\end{multline*}
We notice that $\mathscr{B}_m(\Omega)\subset \mathscr{A}_m^{s_k}(\Omega)$ for every $k\in\mathbb{N}$.

\begin{lemma}\label{Gamma limsup polhyedre}
For every $\Pi = (\Pi_1,...,\Pi_m)\in  \mathscr{B}_m(\Omega)$, we have
$$ \limsup_{k\to\infty} \,(1-2s_k) \mathscr{P}^{\boldsymbol{\sigma}}_{2s_k}(\Pi,\Omega) \leq \frac{\omega_{n-1}}{2}\sum \limits_{i, j=1}^m\sigma_{ij} \mathcal{H}^{n-1}(\partial\Pi_i \cap \partial \Pi_j \cap \Omega) = \omega_{n-1}\mathscr{P}_1^{\boldsymbol{\sigma}}(\Pi,\Omega)\,. $$
\end{lemma}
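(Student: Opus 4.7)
The plan is to establish the (stronger) pointwise convergence
\begin{equation*}
\lim_{k \to \infty} (1-2s_k)\, \mathscr{P}_{2s_k}^{\boldsymbol{\sigma}}(\Pi, \Omega) = \omega_{n-1}\, \mathscr{P}_1^{\boldsymbol{\sigma}}(\Pi, \Omega),
\end{equation*}
which is precisely the ``lengthy computation'' alluded to in the introduction, and which implies the claimed limsup bound. The main idea is to reduce the multiphase computation to the single-phase fractional perimeter on polyhedra via an elementary algebraic identity.

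First, by symmetry of the kernel and of $\boldsymbol{\sigma}$, I rewrite $\mathscr{P}_{2s}^{\boldsymbol{\sigma}}(\Pi, \Omega) = \sum_{i<j} \sigma_{ij}\, \mathcal{J}_{ij}(s)$ where
\begin{equation*}
\mathcal{J}_{ij}(s) := \mathcal{I}_{2s}(\Pi_i \cap \Omega,\, \Pi_j \cap \Omega) + \mathcal{I}_{2s}(\Pi_i \cap \Omega,\, \Pi_j \cap \Omega^c) + \mathcal{I}_{2s}(\Pi_i \cap \Omega^c,\, \Pi_j \cap \Omega).
\end{equation*}
Setting $X = \Pi_i \cap \Omega$, $Y = \Pi_j \cap \Omega$, $Z = (\Pi_i \cup \Pi_j)^c \cap \Omega$ and $X', Y', Z'$ the analogous intersections with $\Omega^c$, I would expand $P_{2s}(\Pi_i, \Omega)$, $P_{2s}(\Pi_j, \Omega)$ and $P_{2s}(\Pi_i \cup \Pi_j, \Omega)$ each as a sum of three $\mathcal{I}_{2s}$-interactions. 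All $(\cdot, Z)$- and $(\cdot, Z')$-type contributions cancel (they appear once in each of $P_{2s}(\Pi_i,\Omega)+P_{2s}(\Pi_j,\Omega)$ and in $P_{2s}(\Pi_i\cup\Pi_j,\Omega)$), leaving the key identity
\begin{equation*}
2\,\mathcal{J}_{ij}(s) = P_{2s}(\Pi_i, \Omega) + P_{2s}(\Pi_j, \Omega) - P_{2s}(\Pi_i \cup \Pi_j, \Omega).
\end{equation*}
Since $\Pi_i$, $\Pi_j$ and $\Pi_i \cup \Pi_j$ all belong to the class of polyhedral sets with $\mathcal{H}^{n-1}$-negligible intersection of the boundary with $\partial \Omega$, the problem is reduced to the pointwise convergence of $(1-2s)P_{2s}(E, \Omega)$ for a single polyhedral set $E$.

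The second and harder step is the single-phase limit
\begin{equation*}
\lim_{s \to 1/2^-}(1-2s)\, P_{2s}(E, \Omega) = \omega_{n-1}\, \mathcal{H}^{n-1}(\partial E \cap \Omega)
\end{equation*}
for polyhedral $E$ with $\mathcal{H}^{n-1}(\partial E \cap \partial \Omega) = 0$, which I would obtain by promoting the half-space calculation \eqref{convhalfspace} via a localization. Since $\partial E \cap \overline\Omega$ consists of finitely many flat $(n-1)$-dimensional polygonal faces $F_1,\dots,F_N$, I cover a $\mathcal{H}^{n-1}$-full-measure part of each $F_\ell \cap \Omega$ by small disjoint rotated cubes $x + r R_\ell \overline Q$ whose bottom faces lie on the hyperplane supporting $F_\ell$. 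On each such cube, the configuration of $E$ coincides locally with the half-space model, so: (i) the flat-face contributions converge to $\omega_{n-1}\, \mathcal{H}^{n-1}(F_\ell \cap \Omega)$ by a Vitali covering argument combined with \eqref{convhalfspace}; (ii) contributions from a neighbourhood of the lower-dimensional skeleton $\bigcup_\ell \partial F_\ell$ and of $\partial \Omega$ are $o((1-2s)^{-1})$ by elementary estimates on the kernel away from codimension-one sets; (iii) the long-range interactions between points at macroscopic distance from $\partial E$ are uniformly bounded in $s$ close to $1/2$ and hence disappear after multiplication by $(1-2s)$. The hypothesis $\mathcal{H}^{n-1}(\partial E \cap \partial \Omega) = 0$ is exactly what guarantees the cross-boundary terms $\mathcal{I}_{2s}(E\cap\Omega,E^c\cap\Omega^c) + \mathcal{I}_{2s}(E\cap\Omega^c,E^c\cap\Omega)$ satisfy the same asymptotics.

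Finally, combining the single-phase limit with the algebraic identity and the elementary inclusion–exclusion
\begin{equation*}
\mathcal{H}^{n-1}(\partial \Pi_i \cap \Omega) + \mathcal{H}^{n-1}(\partial \Pi_j \cap \Omega) - \mathcal{H}^{n-1}(\partial(\Pi_i \cup \Pi_j) \cap \Omega) = 2\, \mathcal{H}^{n-1}(\partial \Pi_i \cap \partial \Pi_j \cap \Omega),
\end{equation*}
which for polyhedral partitions follows immediately from the standard decomposition $\partial \Pi_i \cap \Omega = \bigcup_{l \neq i} \partial \Pi_i \cap \partial \Pi_l \cap \Omega$ (modulo $\mathcal{H}^{n-1}$-null sets) and the analogous decomposition for $\partial(\Pi_i \cup \Pi_j)$, we obtain $\lim_k (1-2s_k)\,\mathcal{J}_{ij}(s_k) = \omega_{n-1}\,\mathcal{H}^{n-1}(\partial \Pi_i \cap \partial \Pi_j \cap \Omega)$. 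Summing over $i<j$ with the weights $\sigma_{ij}$ yields the claimed limit, hence the required limsup bound. The main obstacle is the geometric localization of step two; the partition structure enters only through the clean algebraic reduction of the first step.
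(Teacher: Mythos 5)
Your proposal is correct in substance but follows a genuinely different route from the paper. You reduce the multiphase statement to the single-phase polyhedral limit via the exact identity $2\,\mathcal{J}_{ij}(s)=P_{2s}(\Pi_i,\Omega)+P_{2s}(\Pi_j,\Omega)-P_{2s}(\Pi_i\cup\Pi_j,\Omega)$ (which checks out: all the interactions with $Z=(\Pi_i\cup\Pi_j)^c\cap\Omega$ and $Z'$ cancel, and the surviving six terms pair up by symmetry of the kernel), together with the matching inclusion--exclusion for perimeters. The paper instead estimates each interaction $\mathcal{I}_{2s}(\Pi_i\cap\Omega,\Pi_j\cap\Omega)$ directly, covering the interface $\partial\Pi_i\cap\partial\Pi_j$ by small cubes and splitting into three regimes (far from the interface, near the interface but outside the cubes, inside the cubes), and then shows in a separate step that the exterior contribution $\mathscr{F}^{\boldsymbol{\sigma}}_{2s}$ vanishes in the limit. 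Your reduction buys a much shorter proof if one is willing to quote the single-phase polyhedral asymptotics from \cite{ADPM} as a black box, and it absorbs the $\mathscr{F}$-term automatically through the hypothesis $\mathcal{H}^{n-1}(\partial\Pi_i\cap\partial\Omega)=0$; the paper's direct computation is longer but self-contained and produces intermediate estimates (the cube-by-cube bounds) that are reused later in the paper. Two points need care in your write-up. First, because of the minus sign in front of $P_{2s}(\Pi_i\cup\Pi_j,\Omega)$, an upper bound on $\limsup_k(1-2s_k)\mathcal{J}_{ij}(s_k)$ requires a \emph{lower} bound on $\liminf_k(1-2s_k)P_{2s_k}(\Pi_i\cup\Pi_j,\Omega)$; your covering argument naturally gives only the $\limsup$ half of the single-phase limit, so you must also invoke the $\Gamma$-$\liminf$ inequality of \cite[Theorem 2]{ADPM} (or the analogue of Proposition \ref{proplowerbd1} for $m=2$) to close the identity. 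Second, the inclusion--exclusion must be stated with the distributional perimeter ${\rm Per}(\Pi_i\cup\Pi_j,\Omega)$ (equivalently the reduced boundary), not the topological boundary: for two chambers sharing a face, the common face lies in $\partial(\Pi_i\cup\Pi_j)$ but not in $\partial^*(\Pi_i\cup\Pi_j)$, and it is the reduced boundary that the fractional perimeter sees; with that convention the formula is exactly \cite[Theorem II.5.3]{Maggi} as used elsewhere in the paper. With these two adjustments your argument is complete.
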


\begin{proof}
The proof of this lemma could be obtained from \cite[Lemma 8]{ADPM} with some modifications, but we prefer to provide all details for completeness. We fix a partition $\Pi = (\Pi_1,...,\Pi_m)\in  \mathscr{B}_m(\Omega)$, and we proceed in two steps. 
\vskip5pt

\noindent{\it Step 1.} We shall prove in this step that 
\begin{equation}\label{liminternonlocpolyh}
\limsup_{k\to\infty} \,(1-2s_k) \mathscr{E}^{\boldsymbol{\sigma}}_{2s_k}(\Pi,\Omega) \leq \frac{\omega_{n-1}}{2}\sum \limits_{i, j=1}^m\sigma_{ij} \mathcal{H}^{n-1}(\partial\Pi_i \cap \partial \Pi_j \cap \Omega)\,. 
\end{equation}
Since $\mathscr{E}^{\boldsymbol{\sigma}}_{2s_k}(\Pi,\Omega)=\frac{1}{2}\sum_{i,j}\sigma_{ij}\,\mathcal{I}_{2s_k}(\Pi_i\cap\Omega,\Pi_j\cap\Omega)$, it is enough to show that 
\begin{equation}\label{toproveinterfpipj}
 \limsup_{k\to\infty} \,(1-2s_k) \mathcal{I}_{2s_k}(\Pi_i\cap\Omega,\Pi_j\cap\Omega) \leq \omega_{n-1}\mathcal{H}^{n-1}(\partial\Pi_i \cap \partial \Pi_j \cap \Omega)\quad\text{for every $i\not=j$}\,.
 \end{equation}
We proceed similarly to  \cite[Lemma 8]{ADPM}, and we fix some pair $(i,j)$ with $i\not= j$. For simplicity, we shall drop the subscript $k$, and write $s$ instead of $s_k$. 
\vskip3pt

For $\varepsilon\in(0,1)$ fixed but arbitrary (small), we define  
$$(\partial \Pi_i)_{\varepsilon,j}  := \big\{ x \in \Omega : {\rm dist}(x,\partial \Pi_i \cap \partial \Pi_j) < \varepsilon \big\} = (\partial \Pi_j)_{\varepsilon,i}\quad\text{and}\quad 
(\partial \Pi_i)_{\varepsilon,j}^-  := (\partial \Pi_i)_{\varepsilon,j} \cap \Pi_i\,.
$$
Since $\Pi_i$ and $\Pi_j$ are disjoint polyhedral domains, $\partial \Pi_i \cap \partial \Pi_j$ can be written as 
\begin{equation}\label{decompbdpibdpj}
\partial \Pi_i \cap \partial \Pi_j=F_{ij}\cup A_{ij}\,,
\end{equation}
where  $F_{ij}$ is a finite union of $(n-1)$-dimensional polyhedron (the faces, possibly empty), and $A_{ij}$ is a set of lower dimensional polyhedral {\sl edges} with Hausdorff dimension less than $(n-2)$. In particular, we have $\mathcal{H}^{n-1}((\partial \Pi_i \cap \partial \Pi_j)\setminus F_{ij})=0$. 

Then we  find $N_\varepsilon$ disjoints cubes $Q_{i,l}^\varepsilon \subset \Omega$ ($1 \leq l \leq N_\varepsilon $) of side length $\varepsilon>0$ (small) such that
\begin{enumerate}
\item[(0)] $N_\varepsilon=0$ if $F_{ij}=\emptyset$;
\item[(1)] if $\widetilde{Q}_{i,l}^\varepsilon $ denotes the dilatation of $Q_{i,l}^\varepsilon $ by a factor $(1+\varepsilon)$, then each cube $\widetilde{Q}_{i,l}^\varepsilon $ intersects exactly one face $\Sigma$ of $F_{ij}$, its barycenter belongs to $\Sigma$, and each of its faces is either parallel or orthogonal to $\Sigma$;  
\item[(2)] $ \mathcal{H}^{n-1}\Big((\partial \Pi_i \cap \partial \Pi_j \cap \Omega \setminus \bigcup \limits_{l = 1}^{N_\varepsilon} Q_{i,l}^\varepsilon \Big) = \big|\mathcal{H}^{n-1}(\partial \Pi_i \cap \partial \Pi_j \cap \Omega ) - N_\varepsilon \varepsilon^{n-1}\big| \mathop{\longrightarrow}\limits_ {\varepsilon \rightarrow 0}0\,$.
\end{enumerate}
For a point $x \in \R^n$, we set
$$ I_{j,s}(x) := \int_{\Pi_j \cap \Omega} \frac{\de y}{|x-y|^{n+2s}} \,.$$
We need to consider several cases. 
\vskip5pt

\noindent{\it Case 1} : $x \in (\Pi_i \cap \Omega) \backslash (\partial \Pi_i)_{\varepsilon,j}^-$. Then for $y \in \Pi_j \cap \Omega$, we have $|x-y| \geq \varepsilon $ so that 
$$ I_{j,s}(x) \leq \int_{B_\varepsilon(x)^c} \frac{\de y}{|x-y|^{n+2s}} = n\omega_{n} \int_\varepsilon^\infty \frac{\de \rho}{\rho^{2s+1}} = \frac{n\omega_{n}}{2s \varepsilon^{2s}}\,. $$ 
Consequently, 
\begin{equation} \label{outil61bis}
\int_{(\Pi_i \cap \Omega) \backslash (\Pi_i)_{\varepsilon,j}^-} I_{j,s}(x)\, \de x \leq \frac{n\omega_{n} |\Pi_i \cap \Omega|}{2s \varepsilon^{2s}}\,.
\end{equation}
\vskip5pt

\noindent{\it Case 2} : $x \in (\partial \Pi_i)_{\varepsilon,j}^- \backslash \bigcup \limits_{l = 1}^{N_\varepsilon} Q_{i,l}^\varepsilon\,$. In this case, we have  

\begin{multline} \label{outil62bis}
I_{j,s}(x) \leq \int_{(B_{{\rm dist}(x,\Pi_j \cap \Omega)}(x))^c} \frac{\de y}{|x - y|^{n+2s}} = n \omega_n \int_{{\rm dist}(x,\Pi_j \cap \Omega)}^\infty \frac{\de \rho}{\rho^{2s+1}} \\
= \frac{n \omega_n}{2s ({\rm dist}(x,\Pi_j \cap \Omega))^{2s}}\,.
\end{multline}
Then we write 
$$F_{ij} \cap \Omega = \bigcup \limits_{k = 1}^{K_{ij}} \Sigma^{ij}_{k}\quad\text{and}\quad A_{ij}\cap\overline\Omega=\bigcup_{\ell=1}^{L_{ij}}S_\ell^{ij} \,,$$
where each $\Sigma_{k}^{ij}$ is the intersection of a face of $F_{ij}$ with $\Omega$,  and each $S_\ell^{ij}$ is a subset of $ A_{ij}$ included in $P_{\ell}^{ij}\cap\overline\Omega$ for an affine subspace $P_\ell^{ij}\subset\R^n$ of dimension $n-2$, i.e., $P_\ell^{ij}=a^{ij}_{\ell}+V_\ell^{ij}$ with $a^{ij}_{\ell}\in\R^n$ and $V_\ell^{ij}\subset\R^N$ a linear subspace with ${\rm dim}\,V_\ell^{ij}=n-2$  (if $n=2$, then $V_\ell^{ij}=\{0\}$ and the $S_\ell^{ij}$'s are simply the singletons $\{a_\ell^{ij}\}$).  

We define
$$ (\partial \Pi_i)_{\varepsilon,j,k}^F := \big\{x \in (\partial \Pi_i)_{\varepsilon,j}^- : {\rm dist}(x,\Pi_j \cap \Omega) = {\rm dist}(x, \Sigma_{k}^{ij}) \big\}\, ,$$
and 
$$(\partial \Pi_i)_{\varepsilon,j,\ell}^{A} :=   \big\{x \in (\partial \Pi_i)_{\varepsilon,j}^- : {\rm dist}(x,\Pi_j \cap \Omega) ={\rm dist}(x,S_\ell^{ij} ) \big\}\, .$$
By construction, 
$$ (\partial \Pi_i)_{\varepsilon,j}^- = \bigcup \limits_{k = 1}^{K_{ij}} (\partial \Pi_i)_{\varepsilon,j,k}^F\cup \bigcup \limits_{\ell = 1}^{L_{ij}} (\partial \Pi_i)_{\varepsilon,j,\ell}^A\,.$$
Next we observe that 
\begin{equation} \label{outil63bis}
 (\partial \Pi_i)_{\varepsilon,j,k}^F \subset \big\{x + t\nu : x \in \Sigma_{k,\varepsilon}^{ij}\, ,\; t\in (0,\varepsilon)\,,\; \nu \text{ is the interior unit normal to } \Sigma_{k,\varepsilon}^{ij} \big\}\,, 
 \end{equation}
where $\Sigma_{k,\varepsilon}^{ij}$ is the set of points $x$ that belong to the same hyperplane as $\Sigma_{k}^{ij}$ and satisfying ${\rm dist}(x,\Sigma_{k}^{ij}) \leq \varepsilon$. 
Since $\Sigma_{k}^{ij}$ has a Lipschitz regular boundary (when seen as a set of an $(n-1)$-dimensional Euclidean space), we have 
\begin{equation}\label{approxareaface}
\mathcal{H}^{n-1}(\Sigma_{k,\varepsilon}^{ij}\setminus \Sigma_{k}^{ij}) \leq C\varepsilon \quad\text{as $\varepsilon \rightarrow 0$}\,.
\end{equation}
Similarly, we notice that 
\begin{equation} \label{outil63bisbis}
 (\partial \Pi_i)_{\varepsilon,j,\ell}^A \subset \big\{x + t\nu : x \in S_{\ell,\varepsilon}^{ij}\, ,\; t\in (0,\varepsilon)\,,\; \nu\in \mathbb{S}^{n-1}\cap (V_\ell^{ij})^\perp  \big\}\,, 
 \end{equation}
where $S_{\ell,\varepsilon}^{ij}:=\{x\in P_{\ell}^{ij}: {\rm dist}(x,S_{\ell}^{ij})\leq\varepsilon\}$. Since $S_{\ell}^{ij}$ is a bounded subset of $P_{\ell}^{ij}$, we have 
\begin{equation}\label{bdhnmoins2}
\mathcal{H}^{n-2}(S_{\ell,\varepsilon}^{ij})\leq C \quad\text{as $\varepsilon \rightarrow 0$}\,.
\end{equation}
Using \eqref{outil62bis}, we may now estimate
\begin{multline}\label{dima1}
\int_{(\partial \Pi_i)_{\varepsilon,j}^- \backslash \bigcup \limits_{l = 1}^{N_\varepsilon} Q_{i,l}^\varepsilon} I_{j,s}(x)\, \de x \leq \frac{n \omega_n}{2s} \int_{(\partial \Pi_i)_{\varepsilon,j}^- \backslash \bigcup \limits_{l = 1}^{N_\varepsilon} Q_{i,l}^\varepsilon} \frac{\de x}{[{\rm dist}(x,\Pi_j \cap \Omega)]^{2s}}  \\
 \leq \frac{n \omega_n}{2s} \sum \limits_{k = 1}^{K_{ij}} \int_{(\partial \Pi_i)_{\varepsilon,j,k}^F \backslash \bigcup \limits_{l = 1}^{N_\varepsilon} Q_{i,l}^\varepsilon} \frac{\de x}{[{\rm dist}(x,\Sigma_{k}^{ij})]^{2s}}  +\frac{n \omega_n}{2s} \sum \limits_{\ell= 1}^{L_{ij}}  \int_{(\partial \Pi_i)_{\varepsilon,j,\ell}^A} \frac{\de x}{[{\rm dist}(x,S_{\ell}^{ij})]^{2s}}  \,.
\end{multline}
On one side, we infer from \eqref{outil63bis} and \eqref{approxareaface} that
\begin{multline}\label{dima2}
\int_{(\partial \Pi_i)_{\varepsilon,j,k}^F \backslash \bigcup \limits_{l = 1}^{N_\varepsilon} Q_{i,l}^\varepsilon} \frac{\de x}{[{\rm dist}(x,\Sigma_{k}^{ij})]^{2s}}\leq \int_{(\partial \Pi_i)_{\varepsilon,j,k}^F \backslash \bigcup \limits_{l = 1}^{N_\varepsilon} Q_{i,l}^\varepsilon} \frac{\de x}{[{\rm dist}(x,\Sigma_{k,\varepsilon}^{ij})]^{2s}}\\
\leq  \int_{\Sigma_{k,\varepsilon}^{ij} \backslash \bigcup \limits_{l = 1}^{N_\varepsilon} Q_{i,l}^\varepsilon} \left( \int_0^\varepsilon \frac{\de t}{t^{2s}} \right)\, \de\mathcal{H}^{n-1}=\frac{\varepsilon^{1-2s}}{(1-2s)}\mathcal{H}^{n-1}\Big(\Sigma_{k,\varepsilon}^{ij} \backslash \bigcup \limits_{l = 1}^{N_\varepsilon} Q_{i,l}^\varepsilon\Big)\\
\leq \frac{\varepsilon^{1-2s}}{(1-2s)}\mathcal{H}^{n-1}\Big(\Sigma_{k}^{ij} \backslash \bigcup \limits_{l = 1}^{N_\varepsilon} Q_{i,l}^\varepsilon\Big) + \frac{C\varepsilon^{2-2s}}{(1-2s)}\,,
\end{multline}
for a constant $C$ independent of $s$ and $\varepsilon$. On the other hand, \eqref{outil63bisbis} and \eqref{bdhnmoins2} leads to 
\begin{multline}\label{dima3}
\int_{(\partial \Pi_i)_{\varepsilon,j,\ell}^A} \frac{\de x}{[{\rm dist}(x,S_{\ell}^{ij})]^{2s}}\leq  \int_{(\partial \Pi_i)_{\varepsilon,j,\ell}^A} \frac{\de x}{[{\rm dist}(x,S_{\ell,\varepsilon}^{ij})]^{2s}}\\
\leq \int_{S_{\ell,\varepsilon}^{ij}} \left(2\pi\int_0^\varepsilon t^{1-2s}\,\de t\right)\,\de \mathcal{H}^{n-2}\leq C\varepsilon^{2-2s}\,,
\end{multline}
for a constant $C$ independent of $s$ and $\varepsilon$.

Gathering \eqref{dima1}, \eqref{dima2}, and \eqref{dima3}, we deduce that 
\begin{equation}\label{dima4}
\int_{(\partial \Pi_i)_{\varepsilon,j}^- \backslash \bigcup \limits_{l = 1}^{N_\varepsilon} Q_{i,l}^\varepsilon} I_{j,s}(x)\, \de x 
\leq \frac{C}{2s(1-2s)}\mathcal{H}^{n-1}\Big(F_{ij} \backslash \bigcup \limits_{l = 1}^{N_\varepsilon} Q_{i,l}^\varepsilon\Big) +\frac{C\varepsilon}{2s(1-2s)}=\frac{o_\varepsilon(1)}{2s(1-2s)}\,,
\end{equation}
with an error term $o_\varepsilon(1)\to 0$ as $\varepsilon\to 0$ which is independent of  $s$.  
\vskip5pt

\noindent{\it Case 3} : $x \in \Pi_i \cap \bigcup \limits_{l = 1}^{N_\varepsilon} Q_{i,l}^\varepsilon$. We separate $ I_{j,s} (x) $ in two parts, that is
$$ I_{j,s} (x)  = \int_{(\Pi_j \cap \Omega) \cap \{|x - y| \geq \varepsilon^2 \}} \frac{\de y}{|x-y|^{n+2s}} + \int_{(\Pi_j \cap \Omega) \cap \{ |x - y| < \varepsilon^2 \}} \frac{\de y}{|x-y|^{n+2s}} 
  =: I_{j,s}^1(x) + I_{j,s}^2(x)\,.$$
Then,
$$ I_{j,s}^1(x) \leq n \omega_n \int_{\varepsilon^2}^\infty \frac{\de\rho}{\rho^{2s+1}}  = \frac{n \omega_n}{2s \varepsilon^{4s}}\,, $$
and since $Q_{i,l}^\varepsilon \subset \Omega$ (and the cubes are mutually disjoint),
\begin{equation} \label{outil65bis}
\int_{\Pi_i \cap \bigcup \limits_{l = 1}^{N_\varepsilon} Q_{i,l}^\varepsilon} I_{j,s}^1(x)\, \de x \leq \frac{n \omega_n |\Omega|}{2s \varepsilon^{4s}}\,.
\end{equation}
Concerning $I_{j,s}^2(x)$, we observe that if $x \in Q_{i,l}^\varepsilon$ and $ |x - y|<\varepsilon^2 $, then $ y \in \widetilde{Q}_{i,l}^\varepsilon$. Thus, 
\begin{multline*}
\int_{\Pi_i \cap \bigcup \limits_{l = 1}^{N_\varepsilon} Q_{i,l}^\varepsilon} I_{j,s}^2(x)\, \de x  \leq \sum \limits_{l = 1}^{N_\varepsilon} \int_{\Pi_i \cap Q_{i,l}^\varepsilon} \int_{\Pi_j \cap \widetilde{Q}_{i,l}^\varepsilon} \frac{\de x \de y}{|x - y|^{n+2s}}  \\
 \leq \sum \limits_{l = 1}^{N_\varepsilon} \int_{\Pi_i \cap \widetilde{Q}_{i,l}^\varepsilon} \int_{\Pi_j \cap \widetilde{Q}_{i,l}^\varepsilon} \frac{\de x \de y}{|x - y|^{n+2s}} \,.
\end{multline*}
By a change of variables, we obtain 
\begin{align*}
\sum \limits_{l = 1}^{N_\varepsilon} \int_{\Pi_i \cap \widetilde{Q}_{i,l}^\varepsilon} & \int_{\Pi_j \cap \widetilde{Q}_{i,l}^\varepsilon} \frac{\de x \de y}{|x - y|^{n+2s}}  =
N_\varepsilon \int_{H \cap (\varepsilon + \varepsilon^2)Q} \int_{H^c \cap (\varepsilon + \varepsilon^2) Q} \frac{\de x\de y}{|x-y|^{n+2s}} \\[5pt]
&= N_\varepsilon (\varepsilon + \varepsilon^2)^{n-2s} \mathcal{I}_{2s}(H\cap Q,H^c\cap Q)\\[5pt]
&= \varepsilon^{1-2s}(1+\varepsilon)^{n-2s} \mathcal{I}_{2s}(H\cap Q,H^c\cap Q) \big(\mathcal{H}^{n-1}(\partial \Pi_i \cap \partial \Pi_j \cap \Omega )+o_\varepsilon(1)\big)\,,
\end{align*}
with an error term $o_\varepsilon(1)\to 0$ as $\varepsilon\to 0$ which is independent of  $s$.  Therefore, 
\begin{equation} \label{outil66bis}
\int_{\Pi_i \cap \bigcup \limits_{l = 1}^{N_\varepsilon} Q_{i,l}^\varepsilon} I_{j,s}^2(x)\, \de x  \leq (1+\varepsilon)^{n-2s} \mathcal{I}_{2s}(H\cap Q,H^c\cap Q) \big(\mathcal{H}^{n-1}(\partial \Pi_i \cap \partial \Pi_j \cap \Omega )+o_\varepsilon(1)\big)\,.
\end{equation}
Combining \eqref{outil65bis} with \eqref{outil66bis} leads to 
\begin{multline} \label{outil66bisbis}
\int_{\Pi_i \cap \bigcup \limits_{l = 1}^{N_\varepsilon} Q_{i,l}^\varepsilon} I_{j,s}(x)\, \de x \\
 \leq (1+\varepsilon)^{n-2s} \mathcal{I}_{2s}(H\cap Q,H^c\cap Q) \big(\mathcal{H}^{n-1}(\partial \Pi_i \cap \partial \Pi_j \cap \Omega )+o_\varepsilon(1)\big)
+\frac{n \omega_n |\Omega|}{2s \varepsilon^{4s}}\,.
\end{multline}
\vskip5pt

\noindent{\it Conclusion of the three cases.} Recalling \cite[Lemma 9]{ADPM} (see \eqref{convhalfspace}), and since 
$$\mathcal{I}_{2s_k}(\Pi_i\cap\Omega,\Pi_j\cap\Omega)=\int_{\Pi_i\cap\Omega} I_{j,s}(x)\, \de x\,,$$
gathering \eqref{outil61bis}, \eqref{dima4}, and \eqref{outil66bisbis}, we obtain that 
$$\limsup_{s\to 1/2}\,(1-2s)\mathcal{I}_{2s_k}(\Pi_i\cap\Omega,\Pi_j\cap\Omega)\leq  \omega_{n-1} \mathcal{H}^{n-1}(\partial \Pi_i \cap \partial \Pi_j \cap \Omega )+o_\varepsilon(1)\,.$$
Letting now $\varepsilon\to 0$ yields \eqref{toproveinterfpipj} which completes to proof of \eqref{liminternonlocpolyh}. 
\vskip5pt

\noindent{\it Step 2.} Once again, we drop the subscript $k$, and write $s$ instead of $s_k$. We shall prove that 
$$\lim_{s\to1/2} \,(1-2s) \mathscr{F}^{\boldsymbol{\sigma}}_{2s_k}(\Pi,\Omega) = 0\,,$$
which, in view of \eqref{liminternonlocpolyh}, completes the proof of the lemma. 

Since $\mathscr{F}^{\boldsymbol{\sigma}}_{2s}(\Pi,\Omega)=\frac{1}{2}\sum_{i,j} \sigma_{ij}  \big[ \mathcal{I}_{2s}(\Pi_i \cap \Omega,\Pi_j \cap \Omega^c) + \mathcal{I}_{2s}(\Pi_i \cap \Omega^c,\Pi_j \cap \Omega) \big] $, it is enough to prove that 
\begin{equation}\label{quasifin2}
\lim_{s\to1/2} \,(1-2s)\, \mathcal{I}_{2s}(\Pi_i\cap\Omega,\Pi_j\cap\Omega^c) =0 \quad\text{for every $i\not=j$}\,.
\end{equation}
For $\delta\in(0,1)$ arbitrary (small), we set 
\begin{equation} \label{Omega deltabis}
\Omega_\delta^{\rm ext}  := \big\{x \in \Omega^c : {\rm dist}(x,\Omega) < \delta \big\} \quad\text{and}\quad \Omega_\delta^{\rm in}  := \big\{x \in \Omega : {\rm dist}(x,\Omega^c) < \delta \big\}\,.
\end{equation}
Note that $\Omega_\delta^{\rm ext}\cup  \Omega_\delta^{\rm in}$ is an open neighborhood of $\partial\Omega$. 
\vskip3pt

For a point $x \in \R^n$, we now set 
$$ I^c_{j,s}(x) := \int_{\Pi_j \cap \Omega^c} \frac{\de y}{|x-y|^{n+2s}} \,,$$
and we consider two distinct cases. 
\vskip5pt

\noindent{\it Case 1}: $x \in \Pi_i \cap (\Omega \setminus \Omega_\delta^{\rm in})$. For $y \in \Pi_j \cap \Omega^c$, we have $|x - y| \geq \delta $ so that
$$ I^c_{j,s}(x) \leq \int_{B(x,\delta)^c} \frac{\de y}{|x-y|^{n+2s}} = n \omega_n \int_\delta^\infty \frac{\de \rho}{\rho^{2s+1}} = \frac{n \omega_n}{2s \delta^{2s}} \,.$$
\vskip5pt

\noindent{\it Case 2}: $x \in \Pi_i \cap  \Omega_\delta^{\rm in}$. Arguing as above in case  $y \in \Omega^c \setminus \Omega_\delta^{\rm ext}$, we estimate 
$$ I^c_{j,s}(x) \leq  \frac{n \omega_n}{2s \delta^{2s}} + \int_{\Pi_j \cap \Omega_\delta^{\rm ext}} \frac{\de y}{|x-y|^{n+2s}} \,.$$ 
Since
$$\mathcal{I}_{2s}(\Pi_i\cap\Omega,\Pi_j\cap\Omega^c)=\int_{\Pi_i\cap\Omega}  I^c_{j,s}(x) \,\de x\,, $$
gathering Case 1 and Case 2 leads to 
\begin{multline*}
\mathcal{I}_{2s}(\Pi_i\cap\Omega,\Pi_j\cap\Omega^c)\leq \mathcal{I}_{2s}(\Pi_i\cap\Omega_{\delta}^{\rm in},\Pi_j\cap\Omega_\delta^{\rm ext}) +\frac{n \omega_n|\Omega|}{s \delta^{2s}}\\
\leq \mathcal{I}_{2s}\big(\Pi_i\cap(\Omega_{\delta}^{\rm in}\cup \Omega_\delta^{\rm ext}),\Pi_j\cap(\Omega_{\delta}^{\rm in}\cup \Omega_\delta^{\rm ext})\big) +\frac{n \omega_n|\Omega|}{s \delta^{2s}}\,.
\end{multline*}
Applying now Step 1 (proof of \eqref{toproveinterfpipj}) in the open set $\Omega_{\delta}^{\rm in}\cup \Omega_\delta^{\rm ext}$ instead of $\Omega$, we infer that 
\begin{equation}\label{quasifin}
\limsup_{s\to 1/2}\,(1-2s)\, \mathcal{I}_{2s}(\Pi_i\cap\Omega,\Pi_j\cap\Omega^c)\leq \omega_{n-1} \mathcal{H}^{n-1}\big(\partial \Pi_i \cap \partial \Pi_j \cap (\Omega_{\delta}^{\rm in}\cup \Omega_\delta^{\rm ext}) \big)\,.
\end{equation}
Since
$$\bigcap_{\delta>0} (\Omega_{\delta}^{\rm in}\cup \Omega_\delta^{\rm ext}) =\partial\Omega\,, $$
by monotone convergence we have 
$$\lim_{\delta\to 0}  \mathcal{H}^{n-1}\big(\partial \Pi_i \cap \partial \Pi_j \cap (\Omega_{\delta}^{\rm in}\cup \Omega_\delta^{\rm ext}) \big)= \mathcal{H}^{n-1}\big(\partial \Pi_i \cap \partial \Pi_j \cap \partial\Omega) =0\,.$$
Therefore, letting $\delta\to 0$ in \eqref{quasifin} we deduce that \eqref{quasifin2} holds, and the proof is complete. 
\end{proof}

\begin{lemma} \label{Polyhedres approchent polyhedre}
Let $\Pi = (\Pi_1,...,\Pi_m)\in  \mathscr{B}_m(\Omega)$. For every $\varepsilon > 0$, there exists a sequence $\{\Pi_q\}_{q \in \N}=\{ (\Pi_{1,q},...,\Pi_{m,q})\}_{q \in \N}$ in $ \mathscr{B}_m(\Omega)$ 
 such that $ \chi_{\Pi_{i,q}} \to \chi_{\Pi_i} $ in $L^1(\Omega)$ for every $i\in\{1,\ldots,m\}$ and satisfying
$$ \limsup \limits_{q \rightarrow \infty} \mathscr{P}_1^{\boldsymbol{\sigma}}(\Pi_q,\Omega) \leq  \mathscr{P}_1^{\bar{\boldsymbol{\sigma}}}(\Pi,\Omega) + \varepsilon \, .$$
\end{lemma}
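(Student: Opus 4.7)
The plan is to construct $\Pi_q$ by ``inserting'' thin polyhedral sandwich layers of intermediate phases along each interface $\partial\Pi_i\cap\partial\Pi_j\cap\Omega$, where the sequence of inserted phases follows an optimal path realizing $\bar\sigma_{ij}$ in the definition \eqref{defsigmabar}. Intuitively, if $\bar\sigma_{ij}<\sigma_{ij}$, Lemma~\ref{lemmrelaxcoeff} provides a path $i=i_0^{ij},i_1^{ij},\ldots,i_{H_{ij}}^{ij}=j$ in $\{1,\ldots,m\}$ with $\sum_{h}\sigma_{i_h^{ij}i_{h+1}^{ij}}=\bar\sigma_{ij}$; replacing the flat interface $\Sigma\subset\partial\Pi_i\cap\partial\Pi_j$ by a stack of $H_{ij}$ parallel flat interfaces (each carrying weight $\sigma_{i_h^{ij}i_{h+1}^{ij}}$, separated by thin layers of the intermediate phases) produces the same $\mathcal{H}^{n-1}$-contribution but with total weight $\bar\sigma_{ij}$ instead of $\sigma_{ij}$.

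To make this precise, I would fix a small parameter $\delta_q\downarrow 0$ and, for each ordered pair $(i,j)$ with $i\neq j$, decompose $\partial\Pi_i\cap\partial\Pi_j\cap\Omega$ into finitely many flat polyhedral faces $\Sigma_{ij,k}$ (plus an $\mathcal{H}^{n-1}$-negligible set of lower-dimensional edges, as in the decomposition \eqref{decompbdpibdpj} of the previous lemma). On each face $\Sigma=\Sigma_{ij,k}$ with unit normal $\nu$ pointing from $\Pi_i$ to $\Pi_j$, I would consider the $(n-2)$-neighborhood-truncated face $\Sigma^{-\delta_q}:=\{x\in\Sigma:\mathrm{dist}(x,\partial\Sigma)>\delta_q\}$ (still polyhedral) and the open prism
$$T_{ij,k}^{\delta_q}:=\bigl\{x+t\nu:x\in\Sigma^{-\delta_q},\;-\delta_q<t<\delta_q\bigr\},$$
which is itself polyhedral and lies entirely inside $\Omega$ for $q$ large (thanks to $\mathcal{H}^{n-1}(\partial\Pi_i\cap\partial\Omega)=0$, we may further restrict to $\{\mathrm{dist}(\cdot,\partial\Omega)>\delta_q\}$). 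I then split $T_{ij,k}^{\delta_q}$ into $H_{ij}$ parallel sub-slabs of equal thickness $2\delta_q/H_{ij}$ and assign to the $h$-th sub-slab the label $i_h^{ij}$; the two outermost sub-slabs carry labels $i$ and $j$, so they are reabsorbed into $\Pi_i$ and $\Pi_j$ respectively. The partition $\Pi_q$ is then defined by declaring $\Pi_{l,q}$ to be the union of $\Pi_l$ minus everything assigned a different label, plus all sub-slabs labelled $l$.

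The main point is the energy accounting. Inside each modified prism $T_{ij,k}^{\delta_q}$, the new polyhedral partition exhibits $H_{ij}$ interfaces parallel to $\Sigma$, each of $\mathcal{H}^{n-1}$-area equal to $\mathcal{H}^{n-1}(\Sigma^{-\delta_q})=\mathcal{H}^{n-1}(\Sigma)-O(\delta_q)$, carrying the weights $\sigma_{i_h^{ij}i_{h+1}^{ij}}$; by optimality of the path this contributes exactly $\bar\sigma_{ij}\,\mathcal{H}^{n-1}(\Sigma)+O(\delta_q)$. Outside of the modified region, the original interface $\Sigma\setminus\Sigma^{-\delta_q}$ still separates $\Pi_{i,q}$ from $\Pi_{j,q}$ (with weight $\sigma_{ij}$), but its $\mathcal{H}^{n-1}$-measure is $O(\delta_q)$. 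The lateral walls of the sub-slabs add $O(\delta_q)\cdot\mathcal{H}^{n-2}(\partial\Sigma)$ of surface area, again negligible. Summing over all faces and pairs yields
$$\mathscr{P}_1^{\boldsymbol{\sigma}}(\Pi_q,\Omega)\leq\mathscr{P}_1^{\bar{\boldsymbol{\sigma}}}(\Pi,\Omega)+C\delta_q,$$
which proves the claim upon taking $q$ large enough that $C\delta_q<\varepsilon$. The $L^1$-convergence $\chi_{\Pi_{i,q}}\to\chi_{\Pi_i}$ is immediate because the total volume of modifications is bounded by $\sum_{i,j,k}2\delta_q\,\mathcal{H}^{n-1}(\Sigma_{ij,k})=O(\delta_q)$.

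The main obstacle is the handling of junctions: if one naively extends the sub-slabs all the way to $\partial\Sigma$, the slabs coming from two distinct faces that meet along an $(n-2)$-dimensional edge will overlap and assign contradictory labels, so that $\Pi_q$ fails to be a partition. The truncation $\Sigma^{-\delta_q}$ avoids this at the cost of keeping a small part of the original interface unmodified near edges; this is exactly the place where a careful estimate is needed, but since the polyhedral skeleton $\bigcup_{i<j,k}\partial\Sigma_{ij,k}$ has finite $\mathcal{H}^{n-2}$-measure, the surface-area loss near it is $O(\delta_q)$ and does not affect the bound. A second, purely bookkeeping issue is verifying $\mathcal{H}^{n-1}(\partial\Pi_{i,q}\cap\partial\Omega)=0$ for the new partition, which is ensured by placing the prisms inside $\{\mathrm{dist}(\cdot,\partial\Omega)>\delta_q\}$ so that the new interfaces never touch $\partial\Omega$ and the pieces of the old boundaries near $\partial\Omega$ inherit the assumption from $\Pi$.
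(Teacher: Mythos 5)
Your overall strategy --- inserting along each interface of $\partial\Pi_i\cap\partial\Pi_j$ a thin stack of layers whose labels follow an optimal path for $\bar\sigma_{ij}$ from Lemma \ref{lemmrelaxcoeff}, so that the total weight drops from $\sigma_{ij}$ to $\bar\sigma_{ij}$ at the cost of lower-order lateral contributions --- is exactly the paper's. The difference lies in how the modification region is chosen, and that is where your argument has a genuine gap.

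You take as modification region the full normal prism of half-thickness $\delta_q$ over the tangentially truncated face $\Sigma^{-\delta_q}$, and you only check that it stays inside $\Omega$ and away from $\partial\Sigma$. But nothing guarantees that this prism is contained in $\overline{\Pi}_i\cup\overline{\Pi}_j$: a third chamber $\Pi_l$ can approach the relative interior of $\Sigma$ in the \emph{normal} direction (a thin sliver of $\Pi_l$ lying parallel to $\Sigma$ at small height, or a wedge of $\Pi_l$ pinching onto an $(n-2)$-dimensional set in the middle of the face), and the tangential truncation does nothing to exclude this. When the prism meets such a $\Pi_l$, your relabelling removes part of $\Pi_l$ and creates new interfaces --- between the relabelled sub-slabs and the remaining part of $\Pi_l$, and along $\partial\Pi_l$ inside the prism --- carrying the weights $\sigma_{\cdot l}$ and with area that is not covered by your $O(\delta_q)$ bookkeeping. (These contributions do vanish as $\delta_q\to0$, since the set of bad columns shrinks to the $\mathcal{H}^{n-1}$-null closed set $\Sigma\cap\bigcup_{l\neq i,j}\overline{\Pi}_l$, but this requires an argument you have not given and does not yield a rate $O(\delta_q)$.) A second, smaller flaw: the claim that truncating by $\delta_q$ prevents the prisms over two adjacent faces from overlapping is false when the dihedral angle between them is small --- for angles below $60^\circ$ the prisms of half-thickness $\delta_q$ over the $\delta_q$-truncated faces still overlap near the common edge for every $q$; the truncation scale must dominate the thickness scale by a constant depending on the geometry of $\Pi$.

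The paper's proof sidesteps both problems at once: it covers $\mathcal{H}^{n-1}$-almost all of each interface by finitely many small \emph{disjoint} closed cubes that are required to lie in the interior of $\overline{\Pi}_i\cup\overline{\Pi}_j$ (so the relabelling never touches a third chamber and the modified regions never overlap), accepts the uncovered area ${\rm e}_{ij}(\delta)\to0$ as an error absorbed into $\varepsilon$, and decouples the two scales: the cube side $\delta$ is fixed in terms of $\varepsilon$, while the stack thickness is proportional to $\delta\,2^{-q}$, so the lateral contributions are $O(\delta^{n-2}2^{-q})$ per cube and vanish as $q\to\infty$. To repair your version you should either adopt this cube covering, or additionally truncate $\Sigma$ away from the closed $\mathcal{H}^{n-1}$-null set $\Sigma\cap\bigcup_{l\neq i,j}\overline{\Pi}_l$ and then choose the prism thickness small enough, depending on that truncation, that the prism really stays in $\overline{\Pi}_i\cup\overline{\Pi}_j$.
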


\begin{proof}
We fix $\Pi = (\Pi_1,...,\Pi_m)\in  \mathscr{B}_m(\Omega)$ and $\varepsilon>0$. We also consider $\delta=\delta(\varepsilon)>0$ small enough, to be determined later. 
We shall use the notations from the proof of Lemma \ref{Gamma limsup polhyedre}. Recalling that each interface $\partial\Pi_i\cap\partial\Pi_j$ with $i\not=j$ can be decomposed as \eqref{decompbdpibdpj}, we can find 
$M_\delta$ disjoint {\sl closed} cubes included in $\Omega$ and  of side length $\delta$, 
$$\Big\{ \{Q^\delta_{ij,l}\}_{l=1}^{N^{ij}_\delta} \Big\}_{i<j}\quad\text{with}\quad \sum_{i<j}N_\delta^{ij} =M_\delta\,,$$ 
such that for $i<j$, 
\begin{enumerate}
\item[(0)] $N^{ij}_\delta=0$ if $F_{ij}=\emptyset$ (i.e., the subcollection $\{Q^\delta_{ij,l}\}_{l=1}^{N^{ij}_\delta}$ is empty);
\item[(1)]  each cube ${Q}_{ij,l}^\delta $ is included in the interior of $\overline\Pi_i\cup\overline\Pi_j$;
\item[(2)] each cube ${Q}_{ij,l}^\delta $ only intersects $F_{ij}$ (and not $A_{i j}$),  ${Q}_{ij,l}^\delta $ intersects 
exactly one face $\Sigma$ of $F_{ij}$, its barycenter belongs to $\Sigma$, and each of its faces is either parallel or orthogonal to $\Sigma$;  
\item[(3)] $ \mathcal{H}^{n-1}\Big((\partial \Pi_i \cap \partial \Pi_j \cap \Omega \setminus \bigcup \limits_{l = 1}^{N^{ij}_\delta} Q_{ij,l}^\delta \Big) = \big|\mathcal{H}^{n-1}(\partial \Pi_i \cap \partial \Pi_j \cap \Omega ) - N^{ij}_\delta \delta^{n-1}\big|=:{\rm e}_{ij}(\delta) \mathop{\longrightarrow}\limits_ {\delta \rightarrow 0}0\,$.
\end{enumerate}
We shall define the sequence $\{\Pi_q\}_{q \in \N}$ by suitably modifying $\Pi$ in each cube $Q^\delta_{ij,l}$ (since each (closed) cube is included in $\Omega$, the construction will ensure that $\Pi_q\in  \mathscr{B}_m(\Omega)$). In other words, we set 
\begin{equation}\label{bougpaorcube}
\Pi_{k,q}\setminus \Big(\bigcup_{i<j}\bigcup_{l=1}^{N^\delta_{ij}}Q^\delta_{ij,l}\Big):=\Pi_{k}\setminus \Big(\bigcup_{i<j}\bigcup_{l=1}^{N^\delta_{ij}}Q^\delta_{ij}\Big)\quad\text{for every $k\in\{1,\ldots,m\}$}\,.
\end{equation}
{\it Construction in $Q^\delta_{ij,l}$.}  First, by conditions (1) and (2), we can translate and rotate coordinates in such a way that  $Q^\delta_{ij,l}=\delta Q$, as well as $\Pi_i\cap \delta Q=\delta Q\cap\{x_n<0\}$ and $\Pi_j\cap \delta Q=\delta Q\cap\{x_n>0\}$. In particular, $\Pi_k\cap \delta Q=\emptyset$ for $k\not\in\{i,j\}$. 
By Lemma \ref{lemmrelaxcoeff}, we can find $\{i_h\}_{h=0}^H\subset\{1,\ldots,m\}$ with $H\leq m-1$ such that $i_0=i$, $i_H=j$,  $i_h\not=i_{h^\prime}$ for $h\not=h^\prime$, and 
$$\bar\sigma_{ij}=\sum_{h=0}^{H-1}\sigma_{i_hi_{h+1}}\,. $$
{\it Case 1.} If $H=1$, then we simply set 
$$\Pi_{k,q}\cap Q^\delta_{ij,l}:=\Pi_{k}\cap Q^\delta_{ij,l}\quad\text{for every $k\in\mathbb{N}$}\,.$$
{\it Case 2 : $H\geq 2$.} We set $t_q:=2^{-(q+1)}/(H-1)$. We define 
$$\Pi_{i_0,q}\cap Q^\delta_{ij,l}:=\delta Q\cap \Big\{x_n<-\frac{\delta2^{-(q+1)}}{2}\Big\} \quad\text{and} \quad\Pi_{i_H,q}\cap Q^\delta_{ij,l}:=\delta Q\cap \Big\{x_n>\frac{\delta 2^{-(q+1)}}{2}\Big\}\,,  $$
and for $h\in\{1,\ldots, H-1\}$, 
$$\Pi_{i_h,q}\cap Q^\delta_{ij,l}:=\delta Q\cap \Big\{-\frac{\delta2^{-(q+1)}}{2}+\delta(h-1)t_q<x_n<-\frac{\delta2^{-(q+1)}}{2}+\delta ht_q\Big\}\,.$$ 
Then, we simply set 
$$\Pi_{k,q}\cap Q^\delta_{ij,l}:=\emptyset\quad\text{for $k\in\{1,\ldots,m\}\setminus\{i_h\}^H_{h=0}$}\,.$$

\noindent{\it Estimating the energy outside the cubes.} By \eqref{bougpaorcube} and condition (3), we have 
\begin{equation}\label{energorcub}
\mathscr{P}_1^{\boldsymbol{\sigma}}\Big(\Pi_q,\Omega\setminus\Big(\bigcup_{i<j}\bigcup_{l=1}^{N^\delta_{ij}}Q^\delta_{ij,l}\Big)\Big)= \mathscr{P}_1^{\boldsymbol{\sigma}}\Big(\Pi,\Omega\setminus\Big(\bigcup_{i<j}\bigcup_{l=1}^{N^\delta_{ij}}Q^\delta_{ij,l}\Big)\Big)
\leq \boldsymbol{\sigma}_{\rm max}\sum_{i<j}{\rm e}_{ij}(\delta)\,.
\end{equation}
\vskip5pt
\noindent{\it Estimating the energy inside the cubes.} Since in the above construction we obviously have 
$$\mathcal{H}^{n-1}\Big(\delta Q\cap\Big\{x_n=-\frac{\delta 2^{-(q+1)}}{2}+\delta ht_q\Big\}\Big)=\delta^{n-1}\,,$$ 
 we infer that for each $Q^\delta_{ij,l}$,
$$\mathscr{P}_1^{\boldsymbol{\sigma}}(\Pi_q,  Q^\delta_{ij,l}) \leq \delta^{n-1}\sum_{h=0}^{H-1}\sigma_{i_hi_{h+1}} +C\delta^{n-2}2^{-q}=\delta^{n-1}\bar\sigma_{ij}+C\delta^{n-1}2^{-q}\,, $$
for a constant $C$ independent of $q$ and $\delta$ (the second term in the right hand side comes from the contribution on the lateral part of the cube).   Therefore, by condition (3), 
\begin{multline*}
\mathscr{P}_1^{\boldsymbol{\sigma}}\Big(\Pi_q, \bigcup_{l=1}^{N_\delta^{ij}} Q^\delta_{ij,l}\Big)
 \leq N_\delta^{ij} \delta^{n-1}\bar\sigma_{ij} +CN_\delta^{ij}\delta^{n-2}2^{-q}\\
 \leq \bar\sigma_{ij}\mathcal{H}^{n-1} (\partial\Pi_i\cap\partial\Pi_j\cap\Omega)+\boldsymbol{\sigma}_{\rm max}{\rm e}_{ij}(\delta) + CN_\delta^{ij}\delta^{n-1}2^{-q}\,.
 \end{multline*}
 Therefore, 
\begin{equation}\label{energdanscub}
\mathscr{P}_1^{\boldsymbol{\sigma}}\Big(\Pi_q, \bigcup_{i<j}\bigcup_{l=1}^{N_\delta^{ij}} Q^\delta_{ij,l}\Big)\leq  \mathscr{P}_1^{\bar{\boldsymbol{\sigma}}}(\Pi,\Omega)
+\boldsymbol{\sigma}_{\rm max}\sum_{i<j} {\rm e}_{ij}(\delta) +  CM_\delta\delta^{n-1}2^{-q}\,.
\end{equation}
\vskip5pt
\noindent{\it Conclusion.} Gathering \eqref{energorcub} and \eqref{energdanscub}, we deduce that 
\begin{equation}\label{finalsteprelaxcoef}
\mathscr{P}_1^{\boldsymbol{\sigma}}(\Pi_q,\Omega)\leq   \mathscr{P}_1^{\bar{\boldsymbol{\sigma}}}(\Pi,\Omega) +2 \boldsymbol{\sigma}_{\rm max}\sum_{i<j} {\rm e}_{ij}(\delta)  +  CM_\delta\delta^{n-1}2^{-q}\,.
\end{equation}
On the other hand, we clearly have
$$\|\chi_{\Pi_{k,q}}-\chi_{\Pi_k}\|_{L^1(\Omega)}\leq  \sum_{i<j}\sum_{l=1}^{N^{ij}_\delta}\|\chi_{\Pi_{k,q}}-\chi_{\Pi_k}\|_{L^1(Q^\delta_{ij,l})}\leq 
M_\delta\delta^{n}2^{-q}\mathop{\longrightarrow}\limits_{q\to\infty}0\quad \text{$\forall k\in\{1,\ldots,m\}$}\,.$$
Letting $q\to\infty$ in \eqref{finalsteprelaxcoef}, we deduce that 
$$\limsup_{q\to\infty} \mathscr{P}_1^{\boldsymbol{\sigma}}(\Pi_q,\Omega)\leq   \mathscr{P}_1^{\bar{\boldsymbol{\sigma}}}(\Pi,\Omega) +2 \boldsymbol{\sigma}_{\rm max}\sum_{i<j} {\rm e}_{ij}(\delta) \,.$$
Choosing the parameter $\delta$ small enough in such a way that $\sum_{i<j} {\rm e}_{ij}(\delta)\leq \varepsilon/(2 \boldsymbol{\sigma}_{\rm max})$, the conclusion follows. 
\end{proof}

To reach the case of a general partition, we shall make use of the following approximation result by polyhedral partitions essentially taken from \cite[Lemma 3.1]{Bal}. 

\begin{lemma}[\cite{Bal}]\label{approxpolyhlem}
For every Caccioppoli partition $\mathfrak{E}=(E_1,\ldots,E_m)$ of $\Omega$, there exists a sequence $\{\Pi_h\}_{h\in\mathbb{N}}=\{(\Pi_{1,h},\ldots,\Pi_{m,h})\}_{h\in\mathbb{N}}$ in $\mathscr{B}_m(\Omega)$ such that 
\begin{itemize}
\item[(1)] $\chi_{\Pi_{i,h}}\to \chi_{E_i}$ in $L^1(\Omega)$ as $h\to\infty$ for every $i\in\{1,\ldots,m\}$;

\item[(2)] $\mathscr{P}_1^{\bar{\boldsymbol{\sigma}}}(\Pi_h,\Omega)\to \mathscr{P}_1^{\bar{\boldsymbol{\sigma}}}(\mathfrak{E},\Omega)$ as $h\to\infty$. 
\end{itemize}
\end{lemma}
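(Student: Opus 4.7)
The plan is to derive this as an essentially direct consequence of the polyhedral density theorem \cite[Lemma 3.1]{Bal} for Caccioppoli partitions. Baldo's result provides, for a Caccioppoli partition $\mathfrak{E}=(E_1,\ldots,E_m)$ of the Lipschitz bounded open set $\Omega$, a sequence of polyhedral partitions $\widetilde\Pi_h=(\widetilde\Pi_{1,h},\ldots,\widetilde\Pi_{m,h})$ of $\Omega$ satisfying $\chi_{\widetilde\Pi_{i,h}}\to \chi_{E_i}$ in $L^1(\Omega)$ together with convergence of each pairwise interface area
$$\mathcal{H}^{n-1}(\partial\widetilde\Pi_{i,h}\cap\partial\widetilde\Pi_{j,h}\cap\Omega)\longrightarrow\mathcal{H}^{n-1}(\partial^*E_i\cap\partial^*E_j\cap\Omega)\quad\text{for every pair $i\neq j$.}$$
Multiplying each such convergence by $\bar\sigma_{ij}$ and summing over $i,j$ then immediately yields $\mathscr{P}_1^{\bar{\boldsymbol{\sigma}}}(\widetilde\Pi_h,\Omega)\to \mathscr{P}_1^{\bar{\boldsymbol{\sigma}}}(\mathfrak{E},\Omega)$, which is item~(2) of the lemma.

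The remaining task is to upgrade $\widetilde\Pi_h$ to members of the class $\mathscr{B}_m(\Omega)$, which entails two additional conditions: (i) each chamber must be a polyhedral subset of $\R^n$, with $\{\Pi_{i,h}\}_i$ forming a partition of $\R^n$; (ii) $\mathcal{H}^{n-1}(\partial\Pi_{i,h}\cap\partial\Omega)=0$ for every $i$. To handle (i) I would run Baldo's construction on a fixed Lipschitz enlargement $\Omega'\Supset\Omega$, so that each polyhedral chamber is already defined in a neighborhood of $\overline\Omega$, and then attach $\R^n\setminus\Omega'$ to a single distinguished chamber. This modification occurs entirely outside $\Omega$ and therefore preserves the $L^1(\Omega)$-convergence as well as the pairwise interface area convergences. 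To handle (ii), I would apply a generic small rotation and translation to the polyhedral mesh underlying Baldo's construction: for each fixed $h$ the mesh involves only finitely many hyperplane directions, and since $\partial\Omega$ is Lipschitz with finite $(n-1)$-dimensional measure, a Fubini/coarea argument shows that the rotations $R\in SO(n)$ and translations $\tau\in\R^n$ for which some face of the rotated-translated mesh overlaps $\partial\Omega$ on a set of positive $\mathcal{H}^{n-1}$-measure form a Lebesgue-negligible subset of $SO(n)\times\R^n$. A generic perturbation, chosen arbitrarily small in each $h$ and combined with a diagonal extraction, delivers the required boundary condition while preserving the prior convergences.

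The only mildly delicate step in the plan is the generic rotation/translation argument used to enforce condition (ii); everything else is a direct invocation of Baldo's approximation result. I note that the triangle inequality for $\bar{\boldsymbol{\sigma}}$ established in Lemma~\ref{lemmrelaxcoeff} is not actually needed at this stage of the argument, since the weighted convergence follows term by term from Baldo's pairwise-area conclusion irrespective of the weights chosen; that triangle inequality plays its essential role elsewhere in the paper, namely in identifying $\mathscr{P}_1^{\bar{\boldsymbol{\sigma}}}$ as the $L^1$-lower semicontinuous envelope of $\mathscr{P}_1^{\boldsymbol{\sigma}}$.
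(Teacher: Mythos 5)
Your proposal is correct and follows essentially the same route as the paper: both reduce the lemma to Baldo's polyhedral approximation theorem for Caccioppoli partitions and then conclude item (2) by summing the weighted pairwise interface convergences. The only cosmetic differences are that the paper derives the convergence $\mathcal{H}^{n-1}(\partial\Pi_{i,h}\cap\partial\Pi_{j,h}\cap\Omega)\to\mathcal{H}^{n-1}(\partial^*E_i\cap\partial^*E_j\cap\Omega)$ from the convergences of ${\rm Per}(\Pi_{i,h},\Omega)$ and ${\rm Per}(\Pi_{i,h}\cup\Pi_{j,h},\Omega)$ via the identity ${\rm Per}(A\cup B,\Omega)={\rm Per}(A,\Omega)+{\rm Per}(B,\Omega)-2\mathcal{H}^{n-1}(\partial^*A\cap\partial^*B\cap\Omega)$, and simply asserts, by inspection of Baldo's construction, that the approximants already belong to $\mathscr{B}_m(\Omega)$, whereas you quote the interface convergence directly and add a (sound, if not strictly needed) generic rotation/translation argument to secure the class membership.
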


\begin{proof}[Sketch of proof]
According  \cite[Proof of Lemma 3.1]{Bal}, the sequence  $\{\Pi_h\}_{h\in\mathbb{N}}$ provided by \cite[Lemma 3.1]{Bal}  belongs to $\mathscr{B}_m(\Omega)$. It satisfies condition (1),  and 
$${\rm Per}(\Pi_{i,h},\Omega)\to {\rm Per}(E_{i},\Omega) \quad\text{and}\quad {\rm Per}(\Pi_{i,h}\cup \Pi_{j,h},\Omega)\to {\rm Per}(E_{i}\cup E_j,\Omega) \quad\text{as $h\to\infty$}\,,$$
for every $i,j\in\{1,\ldots,m\}$. Since $\Pi_h$ and $\mathfrak{E}$ are partitions, by \cite[Theorem II.5.3]{Maggi} we have
$${\rm Per}(\Pi_{i,h}\cup \Pi_{j,h},\Omega)= {\rm Per}(\Pi_{i,h},\Omega)+{\rm Per}(\Pi_{j,h},\Omega)-2\mathcal{H}^{n-1}(\partial\Pi_{i,h}\cap \partial\Pi_{j,h}\cap\Omega)\,,$$
and 
$${\rm Per}(E_{i}\cup E_j,\Omega) ={\rm Per}(E_{i},\Omega)+{\rm Per}(E_{j},\Omega)-2\mathcal{H}^{n-1}(\partial^*E_i\cap\partial^*E_j\cap\Omega)\,,$$
for every $i\not=j$. Consequently, $\mathcal{H}^{n-1}(\partial\Pi_{i,h}\cap \partial\Pi_{j,h}\cap\Omega)\to \mathcal{H}^{n-1}(\partial^*E_i\cap\partial^*E_j\cap\Omega)$ as $h\to\infty$ 
for every $i\not=j$, and condition (2) follows.
\end{proof}

We finally observe that the functional $({\mathscr{P}}^{\boldsymbol{\sigma}})^*$ is lower semicontinuous with respect to the $L^1(\Omega)$-convergence by construction. This fact is absolutely standard, but we provide a proof for completeness. 

\begin{lemma}\label{lscgamlimsup}
If $\{\mathfrak{E}_h\}_{h\in\mathbb{N}}=\{(E_{1,h},\ldots,E_{m,h})\}_{h\in\mathbb{N}}$ and $\mathfrak{E}=(E_1,\ldots,E_m)$ are measurable partitions of $\Omega$ such that 
$\chi_{E_{i,h}}\to \chi_{E_i}$ in $L^1(\Omega)$ for every $i\in\{1,\ldots,m\}$ as $h\to\infty$, then 
\begin{equation}\label{lscineq}
({\mathscr{P}}^{\boldsymbol{\sigma}})^*(\mathfrak{E},\Omega)\leq \liminf_{h\to\infty} ({\mathscr{P}}^{\boldsymbol{\sigma}})^*(\mathfrak{E}_h,\Omega)\,. 
\end{equation}
\end{lemma}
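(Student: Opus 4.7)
The plan is to prove the inequality by a standard diagonal extraction, which is the natural technique for showing lower semicontinuity of a functional defined as an infimum of $\limsup$-values over admissible approximating sequences. First I would dispose of the trivial case where the right-hand side of \eqref{lscineq} is infinite, and then extract a (not relabelled) subsequence in $h$ so that $\liminf_h ({\mathscr{P}}^{\boldsymbol{\sigma}})^*(\mathfrak{E}_h,\Omega)$ is actually a finite limit.

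Recalling that the sequence $s_k\to1/2^-$ has been fixed throughout the subsection, for each $h$ the very definition of $({\mathscr{P}}^{\boldsymbol{\sigma}})^*(\mathfrak{E}_h,\Omega)$ as an infimum allows me to select a near-optimal recovery sequence $\{\mathfrak{E}_{h,k}\}_{k\in\mathbb{N}}$ with $\mathfrak{E}_{h,k}\in\mathscr{A}_m^{s_k}(\Omega)$ and $\chi_{E_{i,h,k}}\to\chi_{E_{i,h}}$ in $L^1(\Omega)$ as $k\to\infty$, satisfying
$$\limsup_{k\to\infty}(1-2s_k)\mathscr{P}^{\boldsymbol{\sigma}}_{2s_k}(\mathfrak{E}_{h,k},\Omega)\leq ({\mathscr{P}}^{\boldsymbol{\sigma}})^*(\mathfrak{E}_h,\Omega)+\tfrac{1}{h}\,.$$
Then for each $h$ I would pick an integer $k(h)$, strictly increasing in $h$, large enough that both $\|\chi_{E_{i,h,k}}-\chi_{E_{i,h}}\|_{L^1(\Omega)}<1/h$ for every $i$ and every $k\geq k(h)$, and $(1-2s_k)\mathscr{P}^{\boldsymbol{\sigma}}_{2s_k}(\mathfrak{E}_{h,k},\Omega)\leq ({\mathscr{P}}^{\boldsymbol{\sigma}})^*(\mathfrak{E}_h,\Omega)+\tfrac{2}{h}$ for every $k\geq k(h)$.

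The diagonal sequence is then defined by $\tilde{\mathfrak{E}}_k:=\mathfrak{E}_{h(k),k}$, where $h(k):=\max\{h : k(h)\leq k\}\to\infty$ as $k\to\infty$. By construction $\tilde{\mathfrak{E}}_k\in\mathscr{A}_m^{s_k}(\Omega)$, and the triangle inequality
$$\|\chi_{\tilde E_{i,k}}-\chi_{E_i}\|_{L^1(\Omega)}\leq \|\chi_{E_{i,h(k),k}}-\chi_{E_{i,h(k)}}\|_{L^1(\Omega)}+\|\chi_{E_{i,h(k)}}-\chi_{E_i}\|_{L^1(\Omega)}\leq \tfrac{1}{h(k)}+\|\chi_{E_{i,h(k)}}-\chi_{E_i}\|_{L^1(\Omega)}$$
shows that $\chi_{\tilde E_{i,k}}\to\chi_{E_i}$ in $L^1(\Omega)$ for every $i$, so $\{\tilde{\mathfrak{E}}_k\}$ is admissible in the infimum defining $({\mathscr{P}}^{\boldsymbol{\sigma}})^*(\mathfrak{E},\Omega)$.

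Combining the two ingredients, for every $k$ sufficiently large,
$$(1-2s_k)\mathscr{P}^{\boldsymbol{\sigma}}_{2s_k}(\tilde{\mathfrak{E}}_k,\Omega)\leq ({\mathscr{P}}^{\boldsymbol{\sigma}})^*(\mathfrak{E}_{h(k)},\Omega)+\tfrac{2}{h(k)}\,,$$
and passing to the $\limsup$ in $k$ yields $({\mathscr{P}}^{\boldsymbol{\sigma}})^*(\mathfrak{E},\Omega)\leq \liminf_h ({\mathscr{P}}^{\boldsymbol{\sigma}})^*(\mathfrak{E}_h,\Omega)$, which is \eqref{lscineq}. No step is really hard; the only subtlety is the careful bookkeeping in the diagonal extraction to simultaneously control the $L^1$-convergence of characteristic functions and the energy bound along the fixed sequence $s_k\to 1/2^-$.
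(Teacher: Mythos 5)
Your proposal is correct and follows essentially the same route as the paper's proof: a diagonal extraction from near-optimal recovery sequences, with an increasing sequence of thresholds $k(h)$ (the paper's $N_h$, with tolerances $2^{-h}$ in place of your $1/h$) ensuring simultaneous control of the $L^1$-convergence and of the rescaled energy along the fixed sequence $s_k\to 1/2^-$. No gap; the bookkeeping you describe is exactly what the paper carries out.
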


\begin{proof}
We may assume that the right hand side \eqref{lscineq} is finite. Moreover, extracting a subsequence if necessary, we can assume that the $\liminf$ in \eqref{lscineq} is a limit. By definition of $({\mathscr{P}}^{\boldsymbol{\sigma}})^*(\mathfrak{E}_h,\Omega)$, for each $h$, we can a sequence $\{\mathfrak{E}^h_k\}_{k\in\mathbb{N}}=\{(E^h_{1,k},\ldots,E^h_{m,k})\}_{k\in\mathbb{N}}$ 
such that $\mathfrak{E}^h_k\in\mathscr{A}_m^{s_k}(\Omega)$, $\chi_{E^h_{i,k}}\to\chi_{E_{i,h}}$ in $L^1(\Omega)$ as $k\to\infty$ for every $i\in\{1,\ldots,m\}$, and 
$$\limsup_{k\to\infty}\,(1-2s_k)\mathscr{P}^{\boldsymbol{\sigma}}_{2s_k}(\mathfrak{E}^h_k,\Omega) \leq ({\mathscr{P}}^{\boldsymbol{\sigma}})^*(\mathfrak{E}_h,\Omega) +2^{-(h+1)}\,.$$
By induction, for every $h\in\mathbb{N}$ we can find $N_h\in\mathbb{N}$ such that $N_{h+1}\geq N_h+1$ and for which  
$$\max_{i=1,\ldots,m}\big\|\chi_{E^h_{i,k}} -\chi_{E_{i,h}}\big\|_{L^1(\Omega)}\leq 2^{-h}\quad \forall k\geq N_h\,,$$
and 
$$(1-2s_{k})\mathscr{P}^{\boldsymbol{\sigma}}_{2s_{k}}(\mathfrak{E}^h_{k},\Omega) \leq ({\mathscr{P}}^{\boldsymbol{\sigma}})^*(\mathfrak{E}_h,\Omega) +2^{-h}\quad \forall k\geq N_h\,.$$
By construction, the sequence $\{N_h\}_{h\in\mathbb{N}}$ is strictly increasing. Therefore, for every $k\in\mathbb{N}$ with $k\geq N_0$, there exists 
a unique  $h(k)\in\mathbb{N}$ such that $N_{h(k)}\leq k<N_{h(k)+1}$.  
By construction, we have $h(k)\to \infty$ as $k\to\infty$. Now we define $\mathfrak{F}_k=(F_{1,k},\ldots,F_{m,k}):=\mathfrak{E}_k^{h(k)}$ for $k\geq N_0$, and  
$\mathfrak{F}_k:=\mathfrak{E}_k^{0}$ for $k<N_0$. By our choice of $h(k)$, we then have 
$$\max_{i=1,\ldots,m}\big\|\chi_{F_{i,k}} -\chi_{E_{i,h(k)}}\big\|_{L^1(\Omega)}\leq 2^{-h(k)}\quad \forall k\geq N_0\,,$$
and 
\begin{equation}\label{diagseqlsc}
(1-2s_{k})\mathscr{P}^{\boldsymbol{\sigma}}_{2s_{k}}(\mathfrak{F}_{k},\Omega) \leq ({\mathscr{P}}^{\boldsymbol{\sigma}})^*(\mathfrak{E}_{h(k)},\Omega) +2^{-h(k)}\quad \forall k\geq N_0\,.
\end{equation}
In addition,
$$\max_{i=1,\ldots,m}\big\|\chi_{F_{i,k}} -\chi_{E_{i}}\big\|_{L^1(\Omega)}\leq 2^{-h(k)}+\max_{i=1,\ldots,m}\| \chi_{E_{i,h(k)}}-\chi_{E_{i}}\|_{L^1(\Omega)}\mathop{\longrightarrow}\limits_{k\to\infty}0\,.$$
Therefore, by the very definition of $({\mathscr{P}}^{\boldsymbol{\sigma}})^*(\mathfrak{E},\Omega)$ and \eqref{diagseqlsc}, we infer that  
$$({\mathscr{P}}^{\boldsymbol{\sigma}})^*(\mathfrak{E},\Omega)\leq \limsup_{k\to\infty} \,(1-2s_{k})\mathscr{P}^{\boldsymbol{\sigma}}_{2s_{k}}(\mathfrak{F}_{k},\Omega) 
\leq\lim_{k\to\infty} ({\mathscr{P}}^{\boldsymbol{\sigma}})^*(\mathfrak{E}_{h(k)},\Omega)=\lim_{h\to\infty} ({\mathscr{P}}^{\boldsymbol{\sigma}})^*(\mathfrak{E}_{h},\Omega)\,,$$
and the proof is complete. 
\end{proof}

We have now reached the final step of our construction for obtaining the appropriate upper bound on $(\mathscr{P}^{\boldsymbol{\sigma}})^*$. 

\begin{proposition} \label{Gamma lim supbis}
For every partition $\mathfrak{E}=(E_1,\ldots,E_m)$ of $\Omega$ by measurable sets, we have
$$(\mathscr{P}^{\boldsymbol{\sigma}})^*(\mathfrak{E},\Omega) \leq \omega_{n-1}\mathscr{P}_1^{\bar{\boldsymbol{\sigma}}}(\mathfrak{E},\Omega) \,.$$
\end{proposition}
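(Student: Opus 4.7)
The plan is to proceed in two stages, first establishing the bound for polyhedral partitions and then extending to general measurable partitions by density, using in each step the lower semicontinuity of $({\mathscr{P}}^{\boldsymbol{\sigma}})^*$ provided by Lemma \ref{lscgamlimsup}.

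\textbf{Stage 1: the polyhedral case.} For a partition $\Pi \in \mathscr{B}_m(\Omega)$, since $\Pi \in \mathscr{A}_m^{s_k}(\Omega)$ for every $k$, the constant sequence $\mathfrak{E}_k = \Pi$ is admissible in the definition of $({\mathscr{P}}^{\boldsymbol{\sigma}})^*(\Pi,\Omega)$, so Lemma \ref{Gamma limsup polhyedre} yields directly
$$({\mathscr{P}}^{\boldsymbol{\sigma}})^*(\Pi,\Omega) \leq \limsup_{k\to\infty}\,(1-2s_k)\mathscr{P}^{\boldsymbol{\sigma}}_{2s_k}(\Pi,\Omega) \leq \omega_{n-1}\mathscr{P}_1^{\boldsymbol{\sigma}}(\Pi,\Omega)\,.$$
Then, given $\varepsilon>0$, I would invoke Lemma \ref{Polyhedres approchent polyhedre} to obtain a sequence $\{\Pi_q\} \subset \mathscr{B}_m(\Omega)$ with $\chi_{\Pi_{i,q}} \to \chi_{\Pi_i}$ in $L^1(\Omega)$ for every $i$ and $\limsup_q \mathscr{P}_1^{\boldsymbol{\sigma}}(\Pi_q,\Omega) \leq \mathscr{P}_1^{\bar{\boldsymbol{\sigma}}}(\Pi,\Omega)+\varepsilon$. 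Applying the inequality just obtained to each $\Pi_q$, and using the lower semicontinuity of $({\mathscr{P}}^{\boldsymbol{\sigma}})^*$ with respect to $L^1$-convergence (Lemma \ref{lscgamlimsup}), I get
$$({\mathscr{P}}^{\boldsymbol{\sigma}})^*(\Pi,\Omega) \leq \liminf_{q\to\infty} ({\mathscr{P}}^{\boldsymbol{\sigma}})^*(\Pi_q,\Omega) \leq \omega_{n-1}\limsup_{q\to\infty}\mathscr{P}_1^{\boldsymbol{\sigma}}(\Pi_q,\Omega) \leq \omega_{n-1}\big(\mathscr{P}_1^{\bar{\boldsymbol{\sigma}}}(\Pi,\Omega)+\varepsilon\big)\,.$$
Letting $\varepsilon\to 0$ gives the desired bound for polyhedral partitions.

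\textbf{Stage 2: reduction of the general case to the Caccioppoli case, then to the polyhedral case.} If $\mathscr{P}_1^{\bar{\boldsymbol{\sigma}}}(\mathfrak{E},\Omega)=+\infty$, there is nothing to prove, so I assume finiteness. By Lemma \ref{lemmrelaxcoeff}, $\bar\sigma_{ij}\geq \boldsymbol{\sigma}_{\rm min}>0$ for $i\neq j$, so each $E_i$ has finite perimeter in $\Omega$ and $\mathfrak{E}$ is a Caccioppoli partition. I then apply Lemma \ref{approxpolyhlem} to produce $\{\Pi_h\}\subset\mathscr{B}_m(\Omega)$ with $\chi_{\Pi_{i,h}}\to \chi_{E_i}$ in $L^1(\Omega)$ for every $i$ and $\mathscr{P}_1^{\bar{\boldsymbol{\sigma}}}(\Pi_h,\Omega)\to \mathscr{P}_1^{\bar{\boldsymbol{\sigma}}}(\mathfrak{E},\Omega)$.

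\textbf{Stage 3: conclusion via lower semicontinuity.} Applying Stage~1 to each polyhedral $\Pi_h$, and then the lower semicontinuity of $({\mathscr{P}}^{\boldsymbol{\sigma}})^*$ (Lemma \ref{lscgamlimsup}) along $\Pi_h\to\mathfrak{E}$, I obtain
$$({\mathscr{P}}^{\boldsymbol{\sigma}})^*(\mathfrak{E},\Omega) \leq \liminf_{h\to\infty}({\mathscr{P}}^{\boldsymbol{\sigma}})^*(\Pi_h,\Omega) \leq \omega_{n-1}\liminf_{h\to\infty}\mathscr{P}_1^{\bar{\boldsymbol{\sigma}}}(\Pi_h,\Omega)=\omega_{n-1}\mathscr{P}_1^{\bar{\boldsymbol{\sigma}}}(\mathfrak{E},\Omega)\,,$$
which completes the proof. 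I do not expect any real obstacle here, since all the heavy lifting has been carried out in the previous lemmas: the pointwise $\Gamma$-$\limsup$ bound with the unrelaxed coefficients on polyhedra (Lemma \ref{Gamma limsup polhyedre}), the explicit ``by hands'' relaxation on polyhedra (Lemma \ref{Polyhedres approchent polyhedre}), the polyhedral approximation for general Caccioppoli partitions (Lemma \ref{approxpolyhlem}), and the abstract lower semicontinuity of the $\Gamma$-$\limsup$ (Lemma \ref{lscgamlimsup}); the proposition is then essentially a two-step diagonal argument built on top of them.
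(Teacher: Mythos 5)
Your proposal is correct and follows essentially the same two-step argument as the paper: first the polyhedral case via Lemma \ref{Gamma limsup polhyedre}, Lemma \ref{Polyhedres approchent polyhedre} and the lower semicontinuity of $({\mathscr{P}}^{\boldsymbol{\sigma}})^*$, then the general case via Lemma \ref{approxpolyhlem} and lower semicontinuity again. Your added remark that $\bar\sigma_{ij}\geq\boldsymbol{\sigma}_{\rm min}>0$ justifies the reduction to the Caccioppoli case is a correct (and slightly more explicit) version of what the paper states tersely.
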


\begin{proof}
{\it Step 1.} We claim that for $\Pi \in  \mathscr{B}_m(\Omega)$, we have 
\begin{equation}\label{relaxapproxseq}
(\mathscr{P}^{\boldsymbol{\sigma}})^*(\Pi,\Omega) \leq \omega_{n-1}\mathscr{P}_1^{\bar{\boldsymbol{\sigma}}}(\Pi,\Omega) \,.
\end{equation}
Indeed, let us fix  $\Pi \in  \mathscr{B}_m(\Omega)$ and $\varepsilon>0$ arbitrary. 
By Lemma  \ref{Gamma limsup polhyedre}, we have
$$(\mathscr{P}^{\boldsymbol{\sigma}})^*(\Pi,\Omega) \leq \omega_{n-1}\mathscr{P}_1^{\boldsymbol{\sigma}}(\Pi,\Omega) \,.$$
Then we consider the sequence $\{\Pi_q\}_{q \in \N}$ in $ \mathscr{B}_m(\Omega)$ provided by Lemma~\ref{Polyhedres approchent polyhedre} to approximate $\Pi$.  
According to Lemma \ref{lscgamlimsup}, we have 
$$
(\mathscr{P}^{\boldsymbol{\sigma}})^*(\Pi,\Omega) \leq \liminf_{q\to\infty}(\mathscr{P}^{\boldsymbol{\sigma}})^*(\Pi_q,\Omega)\leq  \omega_{n-1} \limsup_{q\to\infty} \mathscr{P}_1^{\boldsymbol{\sigma}}(\Pi_q,\Omega)\leq \omega_{n-1}\mathscr{P}_1^{\bar{\boldsymbol{\sigma}}}(\Pi,\Omega) +\varepsilon\,.
$$
Then \eqref{relaxapproxseq} follows letting $\varepsilon\to 0$. 
\vskip5pt

\noindent{\it Step 2.} Now we consider an arbitrary partition $\mathfrak{E}$ of $\Omega$. If $\mathscr{P}_1^{\bar{\boldsymbol{\sigma}}}(\mathfrak{E},\Omega)=+\infty$, there is nothing to prove. Otherwise $\mathfrak{E}$ is a Caccioppoli partition. In this case, we consider the sequence $\{\Pi_h\}_{h \in \N}$ in $ \mathscr{B}_m(\Omega)$ provided by Lemma \ref{approxpolyhlem} to approximate $\mathfrak{E}$.  
According to Lemma \ref{lscgamlimsup} again and \eqref{relaxapproxseq}, we have
$$(\mathscr{P}^{\boldsymbol{\sigma}})^*(\mathfrak{E},\Omega) \leq \liminf_{h\to\infty}(\mathscr{P}^{\boldsymbol{\sigma}})^*(\Pi_h,\Omega)\leq  \omega_{n-1} \lim_{h\to\infty} \mathscr{P}_1^{\bar{\boldsymbol{\sigma}}}(\Pi_h,\Omega)= \omega_{n-1}\mathscr{P}_1^{\bar{\boldsymbol{\sigma}}}(\mathfrak{E},\Omega) \,,$$
which proves the proposition.
\end{proof}

\section{A further lower estimate on the $\Gamma$-$\liminf$}\label{geomconstcube}

In section our goal is to obtain a refined lower bound on the $\Gamma-\liminf$ functional $({\mathscr{P}}^{\boldsymbol{\sigma}})_*(\cdot,\Omega)$ compare to  
$\mathscr{P}_1^{\boldsymbol{\Gamma}}(\cdot,\Omega)$. More precisely, we seek for a new matrix $\bar{\boldsymbol{\Gamma}}\in\mathscr{S}_m$ below 
$\boldsymbol{\Gamma}$ (componentwise) such that $\mathscr{P}_1^{\bar{\boldsymbol{\Gamma}}}(\cdot,\Omega)$ still provides a meaningful lower bound of 
$({\mathscr{P}}^{\boldsymbol{\sigma}})_*(\cdot,\Omega)$, and such that its coefficients are given more explicitly  (in a useful way).  This matrix $\bar{\boldsymbol{\Gamma}}$ 
turns out to be given by $\bar{{\Gamma}}_{ii}=0$, and for $i\not=j$, 
\begin{multline*}
\overline\Gamma_{ij}:=\liminf_{s\to1/2^-}\,\inf \Big\{ (1-2s)\mathscr{P}^{\bar{\boldsymbol{\sigma}}}_{2s}(\mathfrak{E},Q): 
\mathfrak{E}=(E_{1},\ldots,E_m)\in\mathscr{A}_m^{2s}(Q)\,,\\
E_i\cap Q^c=H\cap Q^c\,,\;E_j\cap Q^c=H^c\cap Q^c\,,\text{ and } E_l\cap Q^c=\emptyset \text{ for }l\not\in\{i,j\}\Big\}\,.
\end{multline*}

The following proposition is the main result of this section. It ensures that $\mathscr{P}_1^{\bar{\boldsymbol{\Gamma}}}(\cdot,\Omega)$ indeed provides a suitable lower bound. 

\begin{proposition}\label{propgammainf}
For every $i\not=j$, we have 
$$\Gamma_{ij}\geq \overline\Gamma_{ij}\,. $$
In particular, 
$$({\mathscr{P}}^{\boldsymbol{\sigma}})_*(\mathfrak{E},\Omega)\geq  \mathscr{P}_1^{\overline{\boldsymbol{\Gamma}}}(\mathfrak{E},\Omega)$$
for every partition $\mathfrak{E}$ of $\Omega$ by measurable sets. 
\end{proposition}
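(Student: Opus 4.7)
The second claim follows from the first: componentwise $\overline\Gamma_{ij}\leq \Gamma_{ij}$ for every $i\neq j$ yields $\mathscr{P}_1^{\overline{\boldsymbol{\Gamma}}}(\mathfrak{E},\Omega)\leq \mathscr{P}_1^{\boldsymbol{\Gamma}}(\mathfrak{E},\Omega)$, and Proposition~\ref{proplowerbd1} delivers the lower bound on $(\mathscr{P}^{\boldsymbol{\sigma}})_*$. So the real content is $\Gamma_{ij}\geq \overline\Gamma_{ij}$ for a fixed pair $i\neq j$. Heuristically this is a \emph{gluing} statement: admissible sequences for $\Gamma_{ij}$ carry only a soft ($L^1$) exterior constraint, while those for $\overline\Gamma_{ij}$ must match the exterior partition $(H,H^c,\emptyset,\ldots,\emptyset)$ exactly; the cost of enforcing this hard exterior datum is to be absorbed by the passage from $\boldsymbol{\sigma}$ to the smaller matrix $\bar{\boldsymbol{\sigma}}$.

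\textbf{The gluing and decomposition.} Fix $i\neq j$ and an admissible sequence $(s_k,\mathfrak{E}_k)$ realising $\Gamma_{ij}$ with $L:=\liminf_k(1-2s_k)\mathscr{E}^{\boldsymbol{\sigma}}_{2s_k}(\mathfrak{E}_k,Q)<\infty$ (a limit along a subsequence), together with a small parameter $\delta\in(0,1/4)$. For each $t\in(1-\delta,1)$ I set
\[
\widetilde E_{i,k,t}:=(E_{i,k}\cap tQ)\cup(H\setminus tQ),\quad \widetilde E_{j,k,t}:=(E_{j,k}\cap tQ)\cup(H^c\setminus tQ),
\]
and $\widetilde E_{l,k,t}:=E_{l,k}\cap tQ$ for $l\notin\{i,j\}$, so that $\widetilde{\mathfrak{E}}_{k,t}=(\widetilde E_{1,k,t},\ldots,\widetilde E_{m,k,t})$ is a partition of $\R^n$ meeting the exterior condition defining $\overline\Gamma_{ij}$. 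I then decompose $\mathscr{P}^{\bar{\boldsymbol{\sigma}}}_{2s_k}(\widetilde{\mathfrak{E}}_{k,t},Q)$ into three parts: (a) interactions with both endpoints in $tQ$, bounded by $\mathscr{E}^{\boldsymbol{\sigma}}_{2s_k}(\mathfrak{E}_k,Q)$ via $\bar{\boldsymbol{\sigma}}\leq\boldsymbol{\sigma}$; (b) interactions with both endpoints in $\R^n\setminus tQ$ and at least one in $Q$, which reduce to $\bar\sigma_{ij}$ times a portion of the half-space interaction supported in the shell $Q\setminus tQ$, bounded by $\omega_{n-1}\bar\sigma_{ij}(1-t^{n-1})+o_k(1)$ after multiplication by $(1-2s_k)$ using \eqref{convhalfspace} on $Q$ and $tQ$; and (c) cross interactions $\mathcal C_k(t)$ with one endpoint in $tQ$ and the other outside, which is the delicate term.

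\textbf{Fubini slicing for the cross term.} To control $\mathcal C_k(t)$ I integrate over $t\in(1-\delta,1)$ and apply Fubini: for fixed $(x,y)$, the set $\{t:x\in tQ,\,y\notin tQ\}$ has one-dimensional Lebesgue measure at most $\min(\delta,\,2(|y|_\infty-|x|_\infty)_+)$. Writing $\chi_{E_{l,k}}=\chi_{E_l^*}+f_{l,k}$ on $Q$ with target $E_i^*=H$, $E_j^*=H^c$, $E_l^*=\emptyset$ otherwise, and $f_{l,k}\to 0$ in $L^1(Q)$, the $\chi_{E_l^*}$-part of the cross interaction is again a piece of the half-space interaction of size $O(\delta)$ after multiplication by $(1-2s_k)$, while the $f_{l,k}$-part is controlled by combining $L^1$-smallness with the uniform energy bound $(1-2s_k)\mathscr{E}^{\boldsymbol{\sigma}}_{2s_k}(\mathfrak{E}_k,Q)\leq L+1$ and the fractional scaling of the kernel. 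The mean value theorem then produces $t_k^*\in(1-\delta,1)$ with $(1-2s_k)\mathcal C_k(t_k^*)\leq O(\delta)+o_k(1)$. Setting $\widetilde{\mathfrak{E}}_k^\delta:=\widetilde{\mathfrak{E}}_{k,t_k^*}$ (admissible for $\overline\Gamma_{ij}$) and summing the three bounds yields
\[
(1-2s_k)\,\mathscr{P}^{\bar{\boldsymbol{\sigma}}}_{2s_k}(\widetilde{\mathfrak{E}}_k^\delta,Q)\leq (1-2s_k)\,\mathscr{E}^{\boldsymbol{\sigma}}_{2s_k}(\mathfrak{E}_k,Q)+O(\delta)+o_k(1),
\]
so passing to $\liminf_k$ and then $\delta\to 0$ gives $\overline\Gamma_{ij}\leq L$; infimising over admissible sequences concludes $\overline\Gamma_{ij}\leq \Gamma_{ij}$.

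\textbf{Expected main obstacle.} The Fubini estimate on $\mathcal C_k(t)$ is the technical heart. Unlike the single-phase case in \cite{ADPM}, the partition constraint forces one to track all cross contributions involving "wrong-phase" slices $E_{l,k}$ with $l\notin\{i,j\}$: each such slice interacts across $\partial(tQ)$ with both $H$ and $H^c$ (coefficients $\bar\sigma_{l i}$ and $\bar\sigma_{l j}$), and the $L^1$-bookkeeping must be arranged so that, after multiplication by $(1-2s_k)$, the cumulative error remains only $O(\delta)+o_k(1)$. The slack provided by the passage $\boldsymbol{\sigma}\leadsto\bar{\boldsymbol{\sigma}}$ in part (a) is what keeps the estimate sharp in the limit and justifies the appearance of $\bar{\boldsymbol{\sigma}}$ in the definition of $\overline\Gamma_{ij}$.
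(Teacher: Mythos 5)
Your reduction of the second claim to the componentwise inequality $\overline\Gamma_{ij}\le\Gamma_{ij}$ via Proposition \ref{proplowerbd1} is exactly what is needed, and your proof of the inequality itself is sound, but it follows a genuinely different route from the paper. The paper proceeds in two stages: first (Lemma \ref{lemmodif1}) a near-optimal sequence for $\Gamma_{ij}$ is modified so that the spurious phases $E_{l,k}$, $l\notin\{i,j\}$, are \emph{absorbed into phase $i$} in a half-slab $L^+_t$ whose position $t$ is chosen by an averaging argument (iterated over the faces of $Q$); second (Lemma \ref{lemmasharp}) the phases $i$ and $j$ are matched to $H$ and $H^c$ in an annulus using the level-set interpolation of \cite[Proposition~11]{ADPM}; only then is the exterior datum imposed and the $\mathscr{F}$-term estimated. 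You instead perform a single hard cut along $\partial(tQ)$, pasting the exact half-space partition outside, and you control the cross term by averaging over the cut radius $t\in(1-\delta,1)$ using the bound $|\{t:x\in tQ,\,y\notin tQ\}|\le\min(\delta,\,2(|y|_\infty-|x|_\infty)_+)$. This works: after splitting $\chi_{E_{l,k}}$ into target plus $L^1$-small error, the target part of the cross interaction is a half-space interaction in the shell, hence $O(\delta)$ after normalization, while the averaged error part is bounded by $C\delta^{-2s_k}\sum_l\|f_{l,k}\|_{L^1(Q)}=o_k(1)$ for fixed $\delta$ — note that for this you do not actually need the uniform energy bound you invoke, only the $L^1$-smallness and the kernel estimate $\int_{B_\delta}|z|^{1-n-2s}\,\de z\le C\delta^{1-2s}/(1-2s)$. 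Both routes consume the inequality $\bar{\boldsymbol{\sigma}}\le\boldsymbol{\sigma}$ at the same point (the interior-interior block). What your approach buys is brevity and independence from \cite[Proposition~11]{ADPM} and from the scaling argument of Step 1 of Lemma \ref{lemmodif1}; what the paper's staged reduction buys is that the delicate estimates are localized one phenomenon at a time (spurious phases first, then the $i$–$j$ interface), and the intermediate cell formulas $\Gamma^\flat_{ij}=\Gamma^\sharp_{ij}=\Gamma_{ij}$ are stated as standalone facts. Do make sure, when writing this up, that a single $t_k$ is selected by one application of Chebyshev to the \emph{sum} of all normalized cross terms, and that the $O(\delta)$ bounds in your part (b) are uniform in $t\in(1-\delta,1)$ so that they survive the $k$-dependent choice of $t_k$.
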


The proof of Proposition \ref{propgammainf} requires two approximation lemmas to be able to pass from $\boldsymbol{\Gamma}$ to $\bar{\boldsymbol{\Gamma}}$. 
In Lemma \ref{lemmodif1} below, we first pass to $\boldsymbol{\Gamma}$ to some intermediate matrix $\boldsymbol{\Gamma}^\flat$. In the rest of the section, we shall 
denote for $r_2>r_1>0$, 
 $$A_{r_1,r_2}=r_2Q\setminus r_1 Q\,.$$
The proof of Proposition \ref{propgammainf} is postponed at the end of the section. 

\begin{lemma}\label{lemmodif1}
For every $i\not=j$, we have 
$$\Gamma_{ij}=\Gamma^\flat_{ij}\,,$$
where
$$\Gamma^\flat_{ij}:=\inf \bigg\{ \liminf  \limits_{k \rightarrow \infty}  (1 -2s_k) \mathscr{E}_{2s_k}^{{\boldsymbol{\sigma}}}(\mathfrak{E}_{k},Q) : \big(\{s_k\}_{k\in\mathbb{N}},\{\mathfrak{E}_k\}_{k\in\mathbb{N}}\big)\in \mathscr{C}^\flat_{ij}\Big\}\,,$$
and
\begin{multline*}
 \mathscr{C}^\flat_{ij}:=\Big\{ \big(\{s_k\}_{k\in\mathbb{N}},\{\mathfrak{E}_k\}_{k\in\mathbb{N}}\big) : s_k\to1/2^-\,,\;\mathfrak{E}_k=(E_{1,k},\ldots,E_{m,k})\in\mathscr{A}_m^{s_k}(Q)\,,\\
 \chi_{E_{i,k}}\to \chi_H\,,\;\chi_{E_{j,k}}\to \chi_{H^c}\,,\; \chi_{E_{l,k}}\to 0 \text{ for $l\not\in\{i,j\}$ in $L^1(Q)$}\,,\\
 \text{there exists $\delta>0$ such that } E_{l,k}\cap A_{1-\delta,1}=\emptyset \text{ for $l\not\in\{i,j\}$ and all $k\in\mathbb{N}$}\Big\}\,.
 \end{multline*}
\end{lemma}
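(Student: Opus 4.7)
The plan is to establish the nontrivial inequality $\Gamma^\flat_{ij}\leq\Gamma_{ij}$, the reverse being immediate from the inclusion $\mathscr{C}^\flat_{ij}\subset\{\text{admissible pairs for }\Gamma_{ij}\}$. Given an admissible pair $(\{s_k\}_{k\in\mathbb{N}},\{\mathfrak{E}_k\}_{k\in\mathbb{N}})$ for $\Gamma_{ij}$ (with finite $\liminf$-energy, by Lemma~\ref{esticonstfirst}) and a small parameter $\delta\in(0,1/2)$, I would construct a modified pair $(\{s_k\}_{k\in\mathbb{N}},\{\tilde{\mathfrak{E}}_k\}_{k\in\mathbb{N}})\in\mathscr{C}^\flat_{ij}$ (with $\delta$-parameter equal to $\delta$) whose $\liminf$ of rescaled energy exceeds that of the original by $O(\delta)$; letting $\delta\to0^+$ then closes the argument.

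The construction is as follows. For a radius $r_k\in(1-2\delta,1-\delta)$ to be chosen shortly, set $\tilde E_{i,k}:=(E_{i,k}\cap r_kQ)\cup(H\cap A_{r_k,1})$, $\tilde E_{j,k}:=(E_{j,k}\cap r_kQ)\cup(H^c\cap A_{r_k,1})$, and $\tilde E_{l,k}:=E_{l,k}\cap r_kQ$ for $l\not\in\{i,j\}$. The required $L^1$-convergences for $\tilde{\mathfrak{E}}_k$ follow trivially from those of $\mathfrak{E}_k$, and $r_kQ\subset(1-\delta)Q$ guarantees $\tilde E_{l,k}\cap A_{1-\delta,1}=\emptyset$ uniformly in $k$. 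Setting $B_k:=r_kQ$, I would decompose $\mathscr{E}_{2s_k}^{\boldsymbol{\sigma}}(\cdot,Q)$ into contributions from $B_k\times B_k$, $A_{r_k,1}\times A_{r_k,1}$, and twice the cross part $B_k\times A_{r_k,1}$. The first piece is unchanged; the second is bounded above by $\sigma_{ij}\mathcal{I}_{2s_k}(H\cap A_{1-2\delta,1},H^c\cap A_{1-2\delta,1})$, and applying the argument of Lemma~\ref{Gamma limsup polhyedre} to the polyhedral pair $(H,H^c)$ on the annulus $A_{1-2\delta,1}$ shows that, after multiplication by $(1-2s_k)$, this piece is at most $\omega_{n-1}\sigma_{ij}\mathcal{H}^{n-1}(\partial H\cap A_{1-2\delta,1})+o_k(1)=O(\delta)+o_k(1)$.

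The main obstacle is the cross term. Expanding $H\cap A_{r_k,1}=\bigcup_q E_{q,k}\cap H\cap A_{r_k,1}$ and similarly for $H^c\cap A_{r_k,1}$, and discarding pairs with $\sigma_{pp}=0$, one obtains the upper bound
$$\Delta_{BA}\;\leq\;\boldsymbol{\sigma}_{\rm max}\,\mathcal{I}_{2s_k}(B_k,\mathcal{C}_k^{r_k})\,,\qquad \mathcal{C}_k^{r_k}:=\Big((E_{i,k}\cap H^c)\cup(E_{j,k}\cap H)\cup\bigcup_{l\not\in\{i,j\}}E_{l,k}\Big)\cap A_{r_k,1}\,.$$
By the $L^1$-convergences one has $|\mathcal{C}_k^{r_k}|\to 0$, but $\mathcal{C}_k^{r_k}$ may concentrate against $\partial B_k$; the crude bound $\mathcal{I}_{2s_k}(B_k,\mathcal{C}_k^{r_k})\leq\mathcal{I}_{2s_k}(B_k,Q\setminus B_k)$ only yields $O(1)$ after rescaling, not $o_k(1)$. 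The key idea is to choose $r_k$ by a Fubini/pigeonhole argument in the radial parameter: setting $r_x:=2\max_i|x_i|$ and $r_y:=2\max_i|y_i|$, the length $L(x,y):=|\{r\in(1-2\delta,1-\delta):x\in rQ,\,y\notin rQ\}|$ satisfies $L(x,y)\leq\min(\delta,(r_y-r_x)_+)\leq\min(\delta,2|x-y|)$ by the reverse triangle inequality. Interchanging integrations and passing to polar coordinates yields
$$\int_{1-2\delta}^{1-\delta}\mathcal{I}_{2s_k}(rQ,\mathcal{C}_k^r)\,dr\;\leq\;\int_{\tilde{\mathcal{C}}_k}\!\int_{\R^n}\frac{\min(\delta,2|x-y|)}{|x-y|^{n+2s_k}}\,dx\,dy\;\leq\;\frac{C\,|\tilde{\mathcal{C}}_k|\,\delta^{1-2s_k}}{2s_k(1-2s_k)}\,,$$
where $\tilde{\mathcal{C}}_k$ is the analogous set with $r=1-2\delta$. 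A mean-value argument then furnishes $r_k\in(1-2\delta,1-\delta)$ with $(1-2s_k)\mathcal{I}_{2s_k}(r_kQ,\mathcal{C}_k^{r_k})\leq C\,|\tilde{\mathcal{C}}_k|\,\delta^{-2s_k}/s_k\to 0$ as $k\to\infty$ (for fixed $\delta$). Summing the three pieces yields $\liminf_k(1-2s_k)\mathscr{E}_{2s_k}^{\boldsymbol{\sigma}}(\tilde{\mathfrak{E}}_k,Q)\leq\liminf_k(1-2s_k)\mathscr{E}_{2s_k}^{\boldsymbol{\sigma}}(\mathfrak{E}_k,Q)+O(\delta)$, whence $\Gamma^\flat_{ij}\leq\Gamma_{ij}+O(\delta)$ and letting $\delta\to 0^+$ completes the proof.
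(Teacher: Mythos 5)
Your proof is correct, but it takes a genuinely different route from the paper's. The paper proceeds face by face: it first uses a dilation argument to show that the energy carried by a boundary annulus $A_{1-\delta,1}$ has small $\liminf$; it then absorbs the spurious chambers $E_{l,k}$, $l\notin\{i,j\}$, into $E_{i,k}$ on an outer slab $\{x_1>t\}$, with $t$ selected by a one-dimensional averaging argument that controls the self-interaction of the spurious set across the cut, while the interaction of the absorbed set with $E_{j,k}$ is controlled by the small annulus energy; finally it iterates over the $2n$ faces of $Q$. Crucially, in the paper's construction $E_{i,k}$ and $E_{j,k}$ are left essentially untouched near $\partial Q$, which is all that $\mathscr{C}^\flat_{ij}$ requires. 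You instead perform a single radial cut-and-paste, substituting the reference configuration $(H,H^c,\emptyset,\ldots)$ on the whole annulus $A_{r_k,1}$, and you select $r_k$ by a radial Fubini argument applied to the full relabelled set $\mathcal{C}_k^{r_k}$, whose measure vanishes; the annulus self-interaction is then computed explicitly for the half-space and contributes only $O(\delta)$. Your bound $L(x,y)\leq\min(\delta,2|x-y|)$ and the resulting estimate $\int_{1-2\delta}^{1-\delta}\mathcal{I}_{2s_k}(rQ,\tilde{\mathcal{C}}_k\setminus rQ)\,\de r\leq C|\tilde{\mathcal{C}}_k|\,\delta^{1-2s_k}/(s_k(1-2s_k))$ are sound, and they are exactly what is needed to beat the $O(1)$ loss of a naive cut (which, as you note, fails when $\mathcal{C}_k^{r}$ concentrates on $\partial(rQ)$); this is the same selection principle the paper uses for its term $J_k(t)$, but applied radially and to all relabelled points at once. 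What your approach buys is substantial: the competitors you construct already satisfy $\tilde E_{i,k}\cap A_{1-\delta,1}=H\cap A_{1-\delta,1}$ and $\tilde E_{j,k}\cap A_{1-\delta,1}=H^c\cap A_{1-\delta,1}$, i.e.\ they lie in the smaller class $\mathscr{C}^\sharp_{ij}$ of Lemma \ref{lemmasharp}, so the same argument proves $\Gamma^\sharp_{ij}=\Gamma_{ij}$ in one stroke and dispenses with the convolution/co-area gluing of \cite[Proposition 11]{ADPM} that the paper invokes at that later stage. Two cosmetic points to tidy up: specify $\tilde{\mathfrak{E}}_k$ on $Q^c$ (e.g.\ keep $\mathfrak{E}_k$ there, so that $\tilde{\mathfrak{E}}_k\in\mathscr{A}_m^{s_k}(Q)$ follows from $\mathcal{I}_{2s_k}(Q,Q^c)<\infty$), and make explicit that $\Delta_{BA}$ denotes the excess of the new cross term over the old one, the old cross term together with the unchanged $B_k\times B_k$ contribution being reabsorbed into $\mathscr{E}^{\boldsymbol{\sigma}}_{2s_k}(\mathfrak{E}_k,Q)$.
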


\begin{proof}
\noindent{\it Step 1.} Since the class of competitors for $\Gamma_{ij}$ is larger than $\mathscr{C}^\flat_{ij}$, we obviously have $\Gamma_{ij}\leq\Gamma^\flat_{ij}$, and we only need to prove the reverse inequality. 
To this purpose, we fix $\varepsilon>0$ arbitrary and we consider a competitor $\big(\{s_k\}_{k\in\mathbb{N}},\{\mathfrak{E}_k\}_{k\in\mathbb{N}}\big)$ for $\Gamma_{ij}$ such that 
\begin{equation} \label{outil72bisfirst}
\liminf \limits_{k \rightarrow \infty}\, (1-2s_k) \mathscr{E}^{\boldsymbol{\sigma}}_{2s_k}(\mathfrak{E}_{k},Q) \leq \Gamma_{ij} + \frac{\varepsilon}{2} \, .
\end{equation}
Extracting a subsequence if necessary, we may assume that the $\liminf$ above is actually a limit.
\vskip3pt

 We claim that, whenever $\delta> 0$ is small enough, we have 
\begin{equation} \label{outil71bis}
 \liminf \limits_{k \rightarrow \infty} (1-2s_k) (1-2s_k) \mathscr{E}^{\boldsymbol{\sigma}}_{2s_k}(\mathfrak{E}_k,A_{1-\delta,1})  \leq \varepsilon \,.
\end{equation}
In fact, it is enough to have $\delta>0$ in such a way that 
\begin{equation} \label{outil70bis}
\big(1 - (1-\delta)^{n-1}\big) \Gamma_{ij} \leq \frac{\varepsilon}{2} \,.
\end{equation}
Indeed, consider the partition $ \mathfrak{F}_k = (F_{1,k},...,F_{m,k}) $ defined by
$$F_{l,k} = \frac{1}{(1-\delta)}\,E_{l,k} \quad\text{for $l\in\{1,\ldots,m\}$}\,.$$
Clearly  the sequence $\{\mathfrak{F}_k\}_{k\in\mathbb{N}}$ is admissible for  $\Gamma_{ij}$ (since $H$ is invariant under dilatations), and 
by a change of variables we have
$$\mathscr{E}^{\boldsymbol{\sigma}}_{2s_k}(\mathfrak{F}_{k},Q) = (1-\delta)^{2s_k - n} \mathscr{E}^{\boldsymbol{\sigma}}_{2s_k}(\mathfrak{E}_{k},Q \backslash A_{1-\delta,1}) \,.$$
Since $\mathscr{E}^{\boldsymbol{\sigma}}_{2s_k}(\mathfrak{E}_{k},\cdot)$ is superadditive, we have 
\begin{multline*} 
\lim\limits_{k \rightarrow \infty} (1-2s_k)\mathscr{E}^{\boldsymbol{\sigma}}_{2s_k} (\mathfrak{E}_{k},Q) \\
\geq \liminf \limits_{k \rightarrow \infty} (1-2s_k)\mathscr{E}^{\boldsymbol{\sigma}}_{2s_k}(\mathfrak{E}_{k},Q \backslash A_{1-\delta,1}) 
+ \liminf \limits_{k \rightarrow \infty} (1-2s_k) \mathscr{E}^{\boldsymbol{\sigma}}_{2s_k} (\mathfrak{E}_{k},A_{1-\delta,1})\,,
\end{multline*}
which leads to 
\begin{multline*}
\Gamma_{ij} + \frac{\varepsilon}{2}  \geq (1-\delta)^{n-1}\liminf \limits_{k \rightarrow \infty} (1-2s_k) \mathscr{E}^{\boldsymbol{\sigma}}_{2s_k} (\mathfrak{F}_{k},Q) + \liminf \limits_{k \rightarrow \infty} (1-2s_k) \mathscr{E}^{\boldsymbol{\sigma}}_{2s_k} (\mathfrak{E}_{k},A_{1-\delta,1})\\
\geq (1-\delta)^{n-1}\Gamma_{ij}+ \liminf \limits_{k \rightarrow \infty} (1-2s_k) \mathscr{E}^{\boldsymbol{\sigma}}_{2s_k} (\mathfrak{E}_{k},A_{1-\delta,1})\,.
\end{multline*}
Therefore, \eqref{outil71bis} follows from condition \eqref{outil70bis}. 
\vskip5pt

 \noindent{\it Step 2.} Now we fix $\delta_2 > \delta_1 > 0$ such that $\delta_2$ satisfies \eqref{outil71bis}, and we set 
 $\delta_3 := \frac{\delta_1 + \delta_2}{2}$. Then we extract a further (not relabeled) subsequence achieving the $\liminf$ in  \eqref{outil71bis}, so that 
 \begin{equation}\label{condepsFbis}
  \lim \limits_{k \rightarrow \infty}(1-2s_k) \mathscr{E}^{\boldsymbol{\sigma}}_{2s_k}(\mathfrak{E}_k,A_{1-\delta_2,1})   \leq \varepsilon \,.
  \end{equation}
 For $t \in ((1-\delta_3)/2,(1-\delta_1)/2)$ (to be chosen depending on $k$), we define
 $$L^+_t:=Q\cap\{x_1>t\}\quad\text{and} \quad L^-_t:=Q\cap\{x_1<t\}\,.$$
We define again a new  of partition $\mathfrak{F}_{k}(t)=(F^t_{1,k},\ldots,F^t_{m,k})\in\mathscr{A}_m^{s_k}(Q)$ depending on $t$ as follows: 
$$ \begin{cases}
           F^t_{l,k} \cap (L_t^-\cup Q^c) = E_{l,k} \cap (L^-_t \cup Q^c)\quad \text{ for  $l\in\{1,\ldots,m\}$}\,,  \\[5pt]
           F^t_{j,k} \cap L_t^+ = E_{j,k} \cap L_t^+ \,,\\[5pt]
           F^t_{i,k} \cap L^+_t = \bigcup \limits_{l \neq j}E_{l,k} \cap L^+_t \,,\\[5pt]
           F^t_{l,k} \cap L^+_t = \emptyset \quad \text{ for $l\not\in\{i,j\}$}\,. 
    \end{cases}
$$
(Note that $\mathfrak{F}_{k}(t)$ belongs to $\mathscr{A}_m^{s_k}(Q)$ by the computations below, and since $\mathcal{I}_{s_k}(Q,Q^c)<\infty$.) 
We shall compare the values of $\mathscr{E}^{\boldsymbol{\sigma}}_{2s_k}(\mathfrak{F}_{k}(t),Q)$ and $\mathscr{E}^{\boldsymbol{\sigma}}_{2s_k}(\mathfrak{E}_{k},Q)$.  To this purpose, we set 
$$ B_k := \bigcup \limits_{l \not\in\{i,j\}} E_{l,k}\cap Q \quad \text{and}\quad  B_k^t = B_k \cap L^+_t \,.$$
We first notice that 
\begin{equation}\label{decompFk}
F^t_{l,k}\cap B_k^t=\begin{cases}
\emptyset & \text{for $l\not =i$}\,,\\
B_k^t & \text{for $l=i$}\,,
\end{cases}
\;\text{and}\;
(F^t_{l,k}\setminus B_k^t)\cap Q=
\begin{cases}
E_{l,k}\cap L_t^-   & \text{for $l\not\in\{i,j\}$}\,,\\
E_{l,k}\cap Q & \text{for $l\in\{i,j\}$}\,.
\end{cases}
\end{equation}
and 
\begin{equation}\label{decompEk}
E_{l,k}\cap B_k^t=\begin{cases}
E_{l,k}\cap L^+_t & \text{for $l\not\in\{i,j\}$}\,,\\
\emptyset & \text{for $l\in\{i,j\}$}\,,
\end{cases}
\;\text{and}\;
(E_{l,k}\setminus B_k^t)\cap Q=
\begin{cases}
E_{l,k}\cap L_t^-   & \text{for $l\not\in\{i,j\}$}\,,\\
E_{l,k}\cap Q & \text{for $l\in\{i,j\}$}\,.
\end{cases}
\end{equation}
In particular, $F^t_{l,k}\triangle E_{l,k}\subset B_k^t$ for every $l\in\{1,\ldots,m\}$. Let us write 
$$\boldsymbol{\Delta}_k(t):=\mathscr{E}^{\boldsymbol{\sigma}}_{2s_k}(\mathfrak{F}_{k}(t),Q)-\mathscr{E}^{\boldsymbol{\sigma}}_{2s_k}(\mathfrak{E}_{k},Q)=\frac{1}{2}\sum_{l\not=h}\sigma_{lh}\Delta^k_{lh}(t)\,,$$
with 
$$\Delta^k_{lh}(t):=\mathcal{I}_{2s_k}(F^t_{l,k}\cap Q,F^t_{h,k}\cap Q)- \mathcal{I}_{2s_k}(E_{l,k}\cap Q,E_{h,k}\cap Q)\,.$$
Then we have for $l\not=h$, 
\begin{align*}
\Delta^k_{lh}(t)=& \big[\mathcal{I}_{2s_k}(F^t_{l,k}\cap B_k^t,F^t_{h,k}\cap B_k^t)- \mathcal{I}_{2s_k}(E_{l,k}\cap B_k^t,E_{h,k}\cap B_k^t)\big]\\
&+ \big[\mathcal{I}_{2s_k}(F^t_{l,k}\cap B_k^t,(F^t_{h,k}\setminus B_k^t)\cap Q)- \mathcal{I}_{2s_k}(E_{l,k}\cap B_k^t,(E_{h,k}\setminus B_k^t)\cap Q)\big]\\
&+  \big[\mathcal{I}_{2s_k}((F^t_{l,k}\setminus B_k^t)\cap Q,F^t_{h,k}\cap B_k^t)- \mathcal{I}_{2s_k}((E_{l,k}\setminus B_k^t)\cap Q,E_{h,k}\cap B_k^t)\big]\,.
\end{align*}
Using \eqref{decompFk} and \eqref{decompEk}, we infer that for $l\not\in\{i,j\}$,
$$\Delta^k_{lh}(t)\leq 
\begin{cases}
0 & \text{for $h\not\in\{i,l\}$}\,,\\[5pt]
 \mathcal{I}_{2s_k}(E_{l,k}\cap L^-_t,B_k^t) & \text{for $h=i$}\,,
\end{cases}$$
as well as 
$$\Delta^k_{ih}(t)\leq 
\begin{cases}
 \mathcal{I}_{2s_k}(B_k^t,E_{h,k}\cap L_t^-)  & \text{for $h\not\in\{i,j\}$}\,,\\[5pt]
 \mathcal{I}_{2s_k}(B_k^t,E_{j,k}\cap Q) & \text{for $h=j$}\,,
\end{cases}$$
and 
$$\Delta^k_{jh}(t)\leq 
\begin{cases}
0 & \text{for $h\not\in\{i,j\}$}\,,\\[5pt]
\mathcal{I}_{2s_k}(E_{j,k}\cap Q, B_k^t) & \text{for $h=i$}\,.
\end{cases}$$
Therefore, by symmetry of $\boldsymbol{\sigma}$, 
\begin{multline}\label{ineq1deltak}
\boldsymbol{\Delta}_k(t)\leq \sigma_{ij}\,\mathcal{I}_{2s_k}(E_{j,k}\cap Q, B_k^t) +\sum_{l\not\in\{i,j\}}\sigma_{li} \,\mathcal{I}_{2s_k}(E_{l,k}\cap L^-_t,B_k^t)\\
\leq \boldsymbol{\sigma}_{\rm max}\,\mathcal{I}_{2s_k}(E_{j,k}\cap Q, B_k^t) + \boldsymbol{\sigma}_{\rm max}\,\mathcal{I}_{2s_k}(B_k\cap L_t^-,B_k^t)\,.
\end{multline}
We shall now estimate separately the two terms in the right hand side above. 

Concerning the first term, we write it as 
\begin{multline}\label{mercr1}
\mathcal{I}_{2s_k}(E_{j,k}\cap Q, B_k^t)=\mathcal{I}_{2s_k}(E_{j,k}\cap Q\cap \{x_1<(1-\delta_2)/2\}, B_k^t)\\
+\mathcal{I}_{2s_k}(E_{j,k}\cap Q\cap \{(1-\delta_2)/2<x_1\}, B_k^t)\,.
\end{multline}
Then,
\begin{multline}\label{mercr2}
\mathcal{I}_{2s_k}(E_{j,k}\cap Q\cap \{x_1<1-\delta_2\}, B_k^t)\\
\leq \mathcal{I}_{2s_k}(Q\cap \{x_1<(1-\delta_2)/2\},Q\cap\{x_1>(1-\delta_3)/2\})
\leq \frac{4^{n+2s}}{(\delta_2-\delta_1)^{n+2s}}\,.
\end{multline}
On the other hand,
\begin{multline}\label{mercr3}
\mathcal{I}_{2s_k}(E_{j,k}\cap Q\cap \{(1-\delta_2)/2<x_1\}, B_k^t)\\
\leq \frac{1}{\boldsymbol{\sigma}_{\rm min}}\sum_{l\notin\{i,j\}}\sigma_{jl}\,\mathcal{I}_{2s_k}(E_{j,k}\cap A_{1-\delta_2,1}, 
E_{l,k}\cap A_{1-\delta_2,1})
\leq  \frac{1}{\boldsymbol{\sigma}_{\rm min}} \mathscr{E}^{\boldsymbol{\sigma}}_{2s_k}(\mathfrak{E}_{k},A_{1-\delta_2,1})\,.
\end{multline}
Combining  \eqref{mercr1}, \eqref{mercr2}, and \eqref{mercr3} leads to 
\begin{equation}\label{olalacpa}
\boldsymbol{\sigma}_{\rm max}\,\mathcal{I}_{2s_k}(E_{j,k}\cap Q, B_k^t) \leq \frac{\boldsymbol{\sigma}_{\rm max}}{\boldsymbol{\sigma}_{\rm min}} \mathscr{E}^{\boldsymbol{\sigma}}_{2s_k}(\mathfrak{E}_{k},A_{1-\delta_2,1})+\frac{\boldsymbol{\sigma}_{\rm max}\,4^{n+2s}}{(\delta_2-\delta_1)^{n+2s}}\,.
\end{equation}

To estimate the term 
$$J_k(t):=\mathcal{I}_{2s_k}(B_k\cap L_t^-,B_k^t)\,,$$ 
we shall rely on an average argument to select a suitable value of $t$ (depending on $k$). We write points $x\in\R^n$ 
as $x=(x_1,x^\prime)$, use Fubini's Theorem and a change of variables to get
\begin{multline*}
J_k(t)
\leq  \int_{B^t_k} \left[ \int_{-\frac{1}{2}}^t \int_{\R^{n-1}} \frac{1}{(|x_1 - y_1|^2 + |x' - y'|^2)^{\frac{n+2s_k}{2}}} \,\de y' \de y_1 \right] \,\de x \\
 = \Lambda_k\int_{B^t_k} \left[ \int_{-\frac{1}{2}}^t \frac{1}{(x_1 - y_1)^{1+2s_k}}\, 
\de y_1 \right] \,\de x \,,
\end{multline*}
where
$$ \Lambda_{k} := \int_{\R^{n-1}} \frac{\de z}{(1+|z|^2)^{\frac{n+2s_k}{2}}} \leq C\,, $$
for a constant $C$ independent of  $k$. Therefore,
\begin{equation}\label{estJk}
J_k(t)\leq \frac{C}{2s_k} \int_{B^t_k}\frac{1}{(x_1-t)^{2s_k}}\,\de x\,.
\end{equation}
In turn, by the co-area formula and the fact that $B_k\subset Q$, 
$$\int_{B^t_k}\frac{1}{(x_1-t)^{2s_k}}\,\de x=\int_t^{1/2}\frac{\mathcal{H}^{n-1}(B_k\cap\{x_1=\rho\})}{(\rho-t)^{2s_k}} \,\de \rho=\int_0^{1/2}\frac{\mathcal{H}^{n-1}(B_k\cap\{x_1=r+t\})}{r^{2s_k}} \,\de r\,.$$ 
Integrating this identity over $t\in ((1-\delta_3)/2,(1-\delta_1)/2)$ and using Fubini's Theorem again, we obtain
$$\int_{\frac{1-\delta_3}{2}}^{\frac{1-\delta_2}{2}}\int_{B^t_k}\frac{1}{(x_1-t)^{2s_k}}\,\de x\,\de t
=\int_{0}^{\frac{1}{2}} \frac{1}{r^{2s_k}}\Big(\int_{r+\frac{1-\delta_3}{2}}^{r+\frac{1-\delta_2}{2}}\mathcal{H}^{n-1}(B_k\cap\{x_1=\rho\})\,\de \rho\Big)\,\de r\leq \frac{|B_k|}{(1-2s_k)}\,.$$
As a consequence, we can find some $t_k\in((1-\delta_3)/2,(1-\delta_1)/2)$ such that 
$$ \int_{B^{t_k}_k}\frac{1}{(x_1-t_k)^{2s_k}}\,\de x\leq \frac{4|B_k|}{(1-2s_k)(\delta_2-\delta_1)}\,.$$
Inserting this estimate in \eqref{estJk}, we obtain
\begin{equation}\label{olalacpa2}
J_k(t_k)\leq  \frac{C|B_k|}{s_k(1-2s_k)(\delta_2-\delta_1)}\,.
\end{equation}
Finally, gathering \eqref{ineq1deltak},  \eqref{olalacpa} and \eqref{olalacpa2} yields 
\begin{equation}\label{olalacpa3}
\boldsymbol{\Delta}_k(t_k)\leq \frac{\boldsymbol{\sigma}_{\rm max}}{\boldsymbol{\sigma}_{\rm min}} \mathscr{E}^{\boldsymbol{\sigma}}_{2s_k}(\mathfrak{E}_{k},A_{1-\delta_2,1})+\frac{\boldsymbol{\sigma}_{\rm max}\,4^{n+2s}}{(\delta_2-\delta_1)^{n+2s}} +  \frac{C\boldsymbol{\sigma}_{\rm max}|B_k|}{s_k(1-2s_k)(\delta_2-\delta_1)}\,,
\end{equation}
\vskip5pt

We now set $\mathfrak{F}_k=(F_{1,k},\ldots,F_{m,k}):=\mathfrak{F}_k(t_k)$, and we observe that the sequence $\{\mathfrak{F}_k\}_{k\in\mathbb{N}}$ is an admissible competitor for $\Gamma_{ij}$. Indeed,  
our choice of $\{\mathfrak{E}_k\}_{k\in\mathbb{N}}$ insures that 
\begin{equation}\label{olalacpa4}
|B_k|=\sum_{l\not\in\{i,j\}}|E_{l,k}| \mathop{\longrightarrow}\limits_{k\to\infty}0\,.
\end{equation}
Therefore,  $|F_{l,k}\cap Q|\leq |E_{l,k}|\to 0$ as $k\to\infty$ for $l\not\in\{i,j\}$, $\chi_{F_{j,k}}=\chi_{E_{j,k}}\to \chi_{H^c}$ in $L^1(Q)$ as $k\to\infty$, and  
$$\|\chi_{F_{i,k}}-\chi_{H}\|_{L^1(Q)}\leq  \|\chi_{F_{i,k}}-\chi_{E_{i,k}}\|_{L^1(Q)}+\|\chi_{E_{i,k}}-\chi_{H}\|_{L^1(Q)}\leq |B_k|+\|\chi_{E_{i,k}}-\chi_{H}\|_{L^1(Q)} \mathop{\longrightarrow}\limits_{k\to\infty}0\,.$$
On the other hand, we deduce from \eqref{outil72bisfirst}, \eqref{condepsFbis}, \eqref{olalacpa3} and \eqref{olalacpa4} that 
\begin{multline*}
\limsup_{k\to\infty}\,(1-2s_k)\mathscr{E}^{\boldsymbol{\sigma}}_{2s_k}(\mathfrak{F}_{k},Q)\\
=\lim_{k\to\infty}\,(1-2s_k)\mathscr{E}^{\boldsymbol{\sigma}}_{2s_k}(\mathfrak{E}_{k},Q) +\limsup_{k\to\infty}\,(1-2s_k)\boldsymbol{\Delta}_k(t_k)
\leq  \Gamma_{ij}+C\varepsilon\,,
\end{multline*}
for a constant $C$ depending only on $\boldsymbol{\sigma}$. 
By construction $\{\mathfrak{F}_k\}_{k\in\mathbb{N}}$ satisfies the property that $F_{l,k}\cap Q\cap \{x_1>(1-\delta_1)/2\}=\emptyset$ for $l\not\in\{ij\}$ and every $k\in\mathbb{N}$. 
\vskip5pt

\noindent{\it Step 3.} Starting from the sequence $\{\mathfrak{F}_k\}_{k\in\mathbb{N}}$, we repeat successively the construction in Step 1 and Step 2 to modify $\{\mathfrak{F}_k\}_{k\in\mathbb{N}}$ near the remaining faces of $Q$. It produces a new (not relabeled) subsequence $(\{s_k\}_{k\in\mathbb{N}},\{\mathfrak{G}_k\}_{k\in\mathbb{N}})\in  \mathscr{C}^\flat_{ij}$ satisfying 
$$\Gamma^\flat_{ij}\leq \liminf_{k\to\infty}\,(1-2s_k)\mathscr{E}^{\boldsymbol{\sigma}}_{2s_k}(\mathfrak{G}_{k},Q)\leq\limsup_{k\to\infty}\,(1-2s_k)\mathscr{E}^{\boldsymbol{\sigma}}_{2s_k}(\mathfrak{G}_{k},Q)\leq   \Gamma_{ij}+C\varepsilon\,,$$
for a constant $C$ depending only on $n$ and $\boldsymbol{\sigma}$. By arbitrariness of $\varepsilon>0$, we conclude that $\Gamma^\flat_{ij}\leq \Gamma_{ij}$, and the proof is complete. 
\end{proof}

In the following lemma, we shall make use of the \cite[Proof of Proposition 11]{ADPM}. It allows us to pass from $\boldsymbol{\Gamma}^\flat$ (hence from $\boldsymbol{\Gamma}$) to some further intermediate but last matrix $\boldsymbol{\Gamma}^\sharp$.  

\begin{lemma}\label{lemmasharp}
For every $i\not=j$, we have 
$$\Gamma_{ij}=\Gamma^\sharp_{ij}\,,$$
where
$$\Gamma^\sharp_{ij}:=\inf \bigg\{ \liminf  \limits_{k \rightarrow \infty}  (1 -2s_k) \mathscr{E}_{2s_k}^{{\boldsymbol{\sigma}}}(\mathfrak{E}_{k},Q) : \big(\{s_k\}_{k\in\mathbb{N}},\{\mathfrak{E}_k\}_{k\in\mathbb{N}}\big)\in \mathscr{C}^\sharp_{ij}\Big\}\,,$$
and
\begin{multline*}
 \mathscr{C}^\sharp_{ij}:=\Big\{ \big(\{s_k\}_{k\in\mathbb{N}},\{\mathfrak{E}_k\}_{k\in\mathbb{N}}\big)\in  \mathscr{C}^\flat_{ij} :  \text{there exists $\delta>0$ such that } \\
 E_{i,k}\cap A_{1-\delta,1}=H\cap  A_{1-\delta,1} \text{ and }  E_{j,k}\cap A_{1-\delta,1}=H^c\cap  A_{1-\delta,1}  \text{ for all $k\in\mathbb{N}$}\Big\}\,.
 \end{multline*}
\end{lemma}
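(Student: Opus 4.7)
The inclusion $\mathscr{C}^\sharp_{ij}\subset\mathscr{C}^\flat_{ij}$ combined with Lemma~\ref{lemmodif1} immediately gives the easy direction $\Gamma^\sharp_{ij}\geq\Gamma^\flat_{ij}=\Gamma_{ij}$. For the reverse inequality, my plan is to fix $\varepsilon>0$, take a near-optimal competitor $(\{s_k\},\{\mathfrak{E}_k\})\in\mathscr{C}^\flat_{ij}$ realizing $\lim_k(1-2s_k)\mathscr{E}^{\boldsymbol{\sigma}}_{2s_k}(\mathfrak{E}_k,Q)\leq\Gamma_{ij}+\varepsilon$ (after extraction), and manufacture a new sequence $\{\mathfrak{F}_k\}\in\mathscr{C}^\sharp_{ij}$ with comparable energy. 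Let $\delta>0$ be given by the $\mathscr{C}^\flat$ condition. The key observation is that inside $A_{1-\delta,1}$ only the chambers $E_{i,k}$ and $E_{j,k}$ are present and they converge in $L^1(Q)$ to $H$ and $H^c$; the problem is thus locally two-phase, to which the construction of \cite[Proof of Proposition~11]{ADPM} can be adapted.

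Concretely, fix $0<\delta_2<\delta_1<\delta$ and, for $r\in(1-\delta_1,1-\delta_2)$, define the candidate partition $\mathfrak{F}^r_k$ by setting $F^r_{l,k}:=E_{l,k}$ for $l\notin\{i,j\}$ and
\begin{equation*}
F^r_{i,k}:=(E_{i,k}\cap rQ)\cup(H\cap A_{r,1})\cup(E_{i,k}\cap Q^c),\quad F^r_{j,k}:=(E_{j,k}\cap rQ)\cup(H^c\cap A_{r,1})\cup(E_{j,k}\cap Q^c).
\end{equation*}
Since $A_{r,1}\subset A_{1-\delta,1}$, the $\mathscr{C}^\flat$ hypothesis forces $E_{l,k}\cap A_{r,1}=\emptyset$ for $l\notin\{i,j\}$, so $\mathfrak{F}^r_k$ is a genuine partition of $\R^n$ which by construction already fulfils the boundary constraint required for $\mathscr{C}^\sharp_{ij}$ with parameter $1-r$.

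The crux of the argument is to bound the defect $\Delta_k(r):=\mathscr{E}^{\boldsymbol{\sigma}}_{2s_k}(\mathfrak{F}^r_k,Q)-\mathscr{E}^{\boldsymbol{\sigma}}_{2s_k}(\mathfrak{E}_k,Q)$. Expanding the interactions and using the identity $(E_{i,k}\triangle H)\cap A_{r,1}=(E_{j,k}\triangle H^c)\cap A_{r,1}=:D_k(r)$ (which holds because both pairs partition $A_{r,1}$), $\Delta_k(r)$ splits into: (a) intra-annular terms controlled by $\mathscr{E}^{\boldsymbol{\sigma}}_{2s_k}(\mathfrak{E}_k,A_{1-\delta_1,1})+\sigma_{ij}\mathcal{I}_{2s_k}(H\cap A_{r,1},H^c\cap A_{r,1})$; and (b) cross interactions of the form $C(\boldsymbol{\sigma})\mathcal{I}_{2s_k}(rQ,D_k(r))$ arising from the substitution. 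The $(1-2s_k)$-multiple of~(a) is small: the first summand is $\leq C\varepsilon$ for $\delta_1$ chosen small enough via the dilatation argument of Step~1 of Lemma~\ref{lemmodif1}, and the second is of order $\omega_{n-1}\sigma_{ij}\delta_1$ by~\eqref{convhalfspace}. For~(b), a pointwise-in-$r$ estimate is impossible due to the singular kernel; the plan is an averaging argument in the spirit of~\eqref{olalacpa2}, integrating $\mathcal{I}_{2s_k}(rQ,D_k(r))$ over $r\in(1-\delta_1,1-\delta_2)$ via Fubini and the coarea formula to obtain an upper bound of order $|D_k(1-\delta_2)|/((1-2s_k)(\delta_1-\delta_2))$, and then selecting $r_k$ by the mean value theorem so that $(1-2s_k)\mathcal{I}_{2s_k}(r_kQ,D_k(r_k))\to 0$ (using $|D_k(1-\delta_2)|\leq\|\chi_{E_{i,k}}-\chi_H\|_{L^1(Q)}+\|\chi_{E_{j,k}}-\chi_{H^c}\|_{L^1(Q)}\to 0$).

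Setting $\mathfrak{F}_k:=\mathfrak{F}^{r_k}_k$, the requisite $L^1$-convergences in $\mathscr{C}^\sharp_{ij}$ follow from those of $\{\mathfrak{E}_k\}$ together with $|D_k(r_k)|\to 0$, and $1-r_k\geq\delta_2$ supplies the boundary parameter. Collecting the bounds, $\liminf_k(1-2s_k)\mathscr{E}^{\boldsymbol{\sigma}}_{2s_k}(\mathfrak{F}_k,Q)\leq\Gamma_{ij}+C\varepsilon+C\omega_{n-1}\sigma_{ij}\delta_1$; taking $\delta_1$ proportional to $\varepsilon$ and sending $\varepsilon\to 0$ yields $\Gamma^\sharp_{ij}\leq\Gamma_{ij}$. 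The main obstacle is the control of the cross-interactions~(b), which the averaging argument over~$r$ is specifically designed to handle.
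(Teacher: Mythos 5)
Your proof is correct, and its skeleton coincides with the paper's: pass to $\mathscr{C}^\flat_{ij}$ via Lemma \ref{lemmodif1}, use the dilation argument to find an annulus $A_{1-\bar\delta,1}$ of small rescaled energy, and then modify the competitor in that annulus so that it agrees with $(H,H^c,\emptyset,\dots)$ near $\partial Q$, at the price of the half-space's own annular energy (which is $O(\omega_{n-1}\delta_1)$ by \eqref{convhalfspace}) plus vanishing remainders. Where you genuinely diverge is in the gluing mechanism. The paper imports the construction of \cite[Proof of Proposition 11]{ADPM}: a convex combination $(1-\varphi)\chi_{E_{i,k}}+\varphi\chi_H$ with a smooth cut-off, followed by a level-set selection, yielding the modified set $\widetilde F_{i,k}$ and the estimate \eqref{estiADPM}. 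You instead perform a \emph{sharp} replacement of $(E_{i,k},E_{j,k})$ by $(H,H^c)$ on $A_{r,1}$ and control the resulting cross-interaction $\mathcal{I}_{2s_k}(rQ,D_k(r))$, $D_k(r)=(E_{i,k}\triangle H)\cap A_{r,1}$, by averaging over the cutting radius $r$ — which is exactly the Fubini/mean-value device the paper itself uses in Step 2 of Lemma \ref{lemmodif1} (cf. \eqref{olalacpa2}), transplanted from hyperplane cuts to dilated cubes. The two routes produce the same three error terms: the annular energy of $H$, a term bounded uniformly in $s$ (hence killed by the factor $1-2s_k$), and an $L^1$-distance term of order $\|\chi_{E_{i,k}}-\chi_H\|_{L^1}/(1-2s_k)$ that vanishes along the sequence. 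Your version is more self-contained (no black-box appeal to \cite[Proposition 11]{ADPM}) and exploits the fact that the target is the explicit half-space; the paper's version is the one that generalizes when the outer datum is not explicit. Two cosmetic points: the containment for the averaging should read $D_k(r)\subset D_k(1-\delta_1)$ (the largest annulus corresponds to the smallest radius, so the measure entering the averaged bound is $|(E_{i,k}\triangle H)\cap A_{1-\delta_1,1}|$, not $|D_k(1-\delta_2)|$ — harmless, since both tend to $0$), and in the intra-annular bookkeeping the term $\mathscr{E}^{\boldsymbol{\sigma}}_{2s_k}(\mathfrak{E}_k,A_{1-\delta_1,1})$ is not actually needed, as the old annular energy enters $\Delta_k(r)$ with a negative sign and can simply be dropped.
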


\begin{proof}
By Lemma \ref{lemmodif1}, we have $\Gamma_{ij}=\Gamma^\flat_{ij}$. Hence we have to prove that $\Gamma^\sharp_{ij}=\Gamma^\flat_{ij}$. Noticing that  
$ \mathscr{C}^\sharp_{ij}\subset  \mathscr{C}^\flat_{ij}$, we have $\Gamma^\sharp_{ij}\geq \Gamma^\flat_{ij}$, and we only have to prove the reverse inequality. 
Once again, we fix $\varepsilon>0$ arbitrary and we consider a competitor $\big(\{s_k\}_{k\in\mathbb{N}},\{\mathfrak{E}_k\}_{k\in\mathbb{N}}\big)\in  \mathscr{C}^\flat_{ij} $  such that 
\begin{equation} \label{outil72bis}
\liminf \limits_{k \rightarrow \infty} (1-2s_k) \mathscr{E}^{\boldsymbol{\sigma}}_{2s_k}(\mathfrak{E}_{k},Q) \leq \Gamma^\flat_{ij} + \frac{\varepsilon}{2} \,,
\end{equation}
and we may assume that the $\liminf$ above is actually a limit. By definition of $\mathscr{C}^\flat_{ij} $, there exists $\delta\in(0,1/4)$ such that $E_{l,k}\cap A_{1-\delta,1}=\emptyset$ for $l\not\in\{i,j\}$ and all $k\in\mathbb{N}$. 

Using that $\Gamma_{ij}=\Gamma^\flat_{ij}$,  we can repeat the argument from Step 1 in the proof of Lemma \ref{lemmodif1} to show that for $\bar\delta\in(0,\delta)$ small enough, then 
\begin{equation}\label{seclaycond}
\liminf \limits_{k \rightarrow \infty} (1-2s_k) \mathscr{E}^{\boldsymbol{\sigma}}_{2s_k}(\mathfrak{E}_{k},A_{1-\bar\delta,1}) \leq \varepsilon\,.
\end{equation}
We fix such a $\bar\delta$ arbitrarily small, and then we extract a subsequence  achieving the $\liminf$ above.  

Next we set 
$$\delta_0:=\frac{3\bar\delta}{4}\,,\quad \delta_1:=\frac{\bar\delta}{2}\,,\quad \delta_2:=\frac{\bar\delta}{4}\,,\quad\text{and}\quad\Omega_{\bar\delta}:=Q\setminus (1-\delta_0)\overline Q\,.$$
We observe that by symmetry of $\boldsymbol{\sigma}$ and the fact that $E_{l,k}\cap \Omega_{\bar\delta}=\emptyset$ for $l\not\in\{i,j\}$, 
\begin{equation}\label{reducidentcst}
\mathscr{E}^{\boldsymbol{\sigma}}_{2s_k}(\mathfrak{E}_{k},\Omega_{\bar\delta})= 
\sigma_{ij}\,\mathcal{I}_{2s_k}(E_{i,k}\cap\Omega_{\bar\delta},E_{j,k}\cap\Omega_{\bar\delta})=\sigma_{ij}\,\mathcal{I}_{2s_k}(E_{i,k}\cap\Omega_{\bar\delta},E^c_{i,k}\cap\Omega_{\bar\delta})\,.
\end{equation}
Following the notation of \cite{ADPM} to ease the reader, we write for measurable sets $E$ and $O$, 
$$\mathcal{J}^1_{2s_k}(E,O):= \mathcal{I}_{2s_k}(E\cap O,E^c\cap O)\,.$$
Then we reproduce verbatim the proof  \cite[Proposition 11]{ADPM} in the open set $\Omega_{\bar\delta}$ with the sets $E_{i,k}$ and $H$ (as well as $\delta_1$ and $\delta_2$). The only exception is that we use a smooth cut-off function $\varphi\in C^\infty(Q)$ compactly supported in $(1-\delta_2)Q$ and 
satisfying $\varphi=1$ in $(1-\delta_1)Q$ (instead of a cut-off function compactly supported in $\Omega_{\bar \delta}$). It produces a set $\widetilde F_{i,k}\subset \Omega_{\bar\delta}$ 
which satisfies 
\begin{itemize}
\item[(a)] $\|\chi_{\widetilde F_{i,k}} -\chi_H\|_{L^1(\Omega_{\bar \delta})}\leq \|\chi_{E_{i,k}} -\chi_H\|_{L^1(\Omega_{\bar \delta})}$;
\item[(b)]  $\widetilde F_{i,k}\cap A_{1-\delta_0,1-\delta_1}=E_{i,k}\cap A_{1-\delta_0,1-\delta_1}$ and $\widetilde F_{i,k}\cap A_{1-\delta_2,1}=H\cap A_{1-\delta_2,1}$; 
\item[(c)] the estimate 
\begin{multline}\label{estiADPM}
\mathcal{J}^1_{2s_k}(\widetilde F_{i,k},\Omega_{\bar \delta})\leq \mathcal{J}^1_{2s_k}(E_{i,k},\Omega_{\bar \delta})+\mathcal{J}^1_{2s_k}(H,A_{1-\delta_1-\varepsilon,1})
+\frac{C}{\varepsilon^{n+2s_k}}\\
+C_{\bar\delta}\Big[\frac{\|\chi_{E_{i,k}}-\chi_{H}\|_{L^1(A_{1-\delta_1,1-\delta_2})}}{(1-2s_k)}+ \|\chi_{E_{i,k}}-\chi_{H}\|_{L^1(\Omega_{\bar\delta})}\Big]\,,
\end{multline}
for a constant $C$ depending only on $n$, and a constant $C_{\bar\delta}$ depending on $n$ and $\bar\delta$. 
\end{itemize}
\vskip5pt

We are now able to define a competitor $\mathfrak{F}_k=(F_{1,k},\ldots,F_{m,k})\in\mathscr{A}_m^{2s_k}(Q)$ by setting  
$$F_{l,k}\cap(1-\delta_0)Q :=E_{l,k}\cap (1-\delta_0)Q \quad\text{for every $l\in\{1,\ldots,m\}$}\,, $$
and
$$F_{l,k}\cap A_{1-\delta_0,1}:= \begin{cases}
\emptyset & \text{for $l\not\in\{i,j\}$}\,,\\
\widetilde F_{i,k} & \text{for $l=i$}\,,\\
(\widetilde F_{i,k})^c\cap A_{1-\delta_0,1}  & \text{for $l=j$}\,,
\end{cases}$$ 
as well as $F_{l,k}\setminus Q=\emptyset$ for $l\not\in\{i,j\}$, $F_{i,k}\setminus Q=H\setminus Q$, and $F_{j,k}\setminus Q=H^c\setminus Q$. By conditions (a) and (b) above, the sequence 
$\{\mathfrak{F}_k\}_{k\in\mathbb{N}}$ satisfies the requirements of the class $\mathscr{C}^\sharp_{ij}$. 

We now observe that 
$$\mathscr{E}^{\boldsymbol{\sigma}}_{2s_k}(\mathfrak{F}_{k},Q)=\mathscr{E}^{\boldsymbol{\sigma}}_{2s_k}(\mathfrak{E}_{k},(1-\delta_0)Q)
+\mathscr{E}^{\boldsymbol{\sigma}}_{2s_k}(\mathfrak{F}_{k},\Omega_{\bar\delta})+\mathscr{R}_k\,, $$
where  
$$\mathscr{R}_k:= \mathscr{R}^1_k+\mathscr{R}^2_k\,,$$
with 
$$  \mathscr{R}^1_k:=\sigma_{ij}\Big(\mathcal{I}_{2s_k}(F_{i,k}\cap A_{1-\delta_0,1}, E_{j,k}\cap A_{1-\bar\delta,1-\delta_0})+\mathcal{I}_{2s_k}(F_{j,k}\cap A_{1-\delta_0,1}, E_{i,k}\cap A_{1-\bar\delta,1-\delta_0})\Big)\,,$$
and 
\begin{multline*}
\mathscr{R}^2_k:=\sum_{l=1}^m\sigma_{il} \mathcal{I}_{2s_k}\big(F_{i,k}\cap A_{1-\delta_0,1}, E_{l,k}\cap (1-\bar\delta)Q\big)\\
+\sum_{l=1}^m\sigma_{jl} \mathcal{I}_{2s_k}\big(F_{j,k}\cap A_{1-\delta_0,1}, E_{l,k}\cap (1-\bar\delta)Q\big)\,.
\end{multline*}
Exactly as in \eqref{reducidentcst}, we have 
$$\mathscr{E}^{\boldsymbol{\sigma}}_{2s_k}(\mathfrak{F}_{k},\Omega_{\bar\delta})=\sigma_{ij}\,\mathcal{I}_{2s_k}(F_{i,k}\cap\Omega_{\bar\delta},F^c_{i,k}\cap\Omega_{\bar\delta})
=\sigma_{ij}\,\mathcal{J}^1_{2s_k}(\widetilde F_{i,k},\Omega_{\bar \delta})\,.$$ 
Using estimate \eqref{estiADPM}, \eqref{reducidentcst}, and the super additivity of $\mathscr{E}^{\boldsymbol{\sigma}}_{2s_k}(\mathfrak{E}_{k},\cdot)$, we deduce that 
\begin{multline}\label{optestisupF}
\mathscr{E}^{\boldsymbol{\sigma}}_{2s_k}(\mathfrak{F}_{k},Q)\leq \mathscr{E}^{\boldsymbol{\sigma}}_{2s_k}(\mathfrak{E}_{k},Q)+\mathscr{R}_k+\sigma_{ij}\,\mathcal{J}^1_{2s_k}(H,A_{1-\delta_1-\varepsilon,1})\\
+\frac{C\sigma_{ij}}{\varepsilon^{n+2s_k}}+C_{\bar\delta}\,\sigma_{ij}\Big[\frac{\|\chi_{E_{i,k}}-\chi_{H}\|_{L^1(A_{1-\delta_1,1-\delta_2})}}{(1-2s_k)}+ \|\chi_{E_{i,k}}-\chi_{H}\|_{L^1(\Omega_{\bar\delta})}\Big]\,.
\end{multline}
To estimate $\mathscr{R}_k$, we first notice that 
\begin{equation}\label{bdsupR2}
\mathscr{R}^2_k\leq \frac{C m \boldsymbol{\sigma}_{\rm max}}{(\bar\delta)^{n+2s_k}} \,,
\end{equation}
for a constant $C$ depending only on $n$. Concerning $\mathscr{R}^1_k$, we have  
\begin{multline*}
\mathcal{I}_{2s_k}(F_{i,k}\cap A_{1-\delta_0,1}, E_{j,k}\cap A_{1-\bar\delta,1-\delta_0})=\mathcal{I}_{2s_k}(E_{i,k}\cap A_{1-\delta_0,1-\delta_1}, E_{j,k}\cap A_{1-\bar\delta,1-\delta_0})\\
+ \mathcal{I}_{2s_k}(F_{i,k}\cap A_{1-\delta_1,1}, E_{j,k}\cap A_{1-\bar\delta,1-\delta_0})\,,
\end{multline*}
and 
\begin{multline*}
\mathcal{I}_{2s_k}(F_{j,k}\cap A_{1-\delta_0,1}, E_{i,k}\cap A_{1-\bar\delta,1-\delta_0})=\mathcal{I}_{2s_k}(E_{j,k}\cap A_{1-\delta_0,1-\delta_1}, E_{i,k}\cap A_{1-\bar\delta,1-\delta_0})\\
+ \mathcal{I}_{2s_k}(F_{j,k}\cap A_{1-\delta_1,1}, E_{i,k}\cap A_{1-\bar\delta,1-\delta_0})\,. 
\end{multline*}
Hence, 
\begin{equation}\label{bdsupR1}
\mathscr{R}^1_k\leq  \mathscr{E}^{\boldsymbol{\sigma}}_{2s_k}(\mathfrak{E}_{k},A_{1-\bar\delta,1}) +\frac{C\boldsymbol{\sigma}_{\rm max}}{(\bar\delta)^{n+2s_k}}\,.
\end{equation}
Combining \eqref{outil72bis}, \eqref{seclaycond}, \eqref{optestisupF}, \eqref{bdsupR2}, and \eqref{bdsupR1}, we deduce that 
$$ \Gamma^\sharp_{ij}\leq \limsup_{k\to\infty}\,(1-2s_k)\mathscr{E}^{\boldsymbol{\sigma}}_{2s_k}(\mathfrak{F}_{k},Q)\leq \Gamma^\flat_{ij} 
+ \frac{3\varepsilon}{2} +\boldsymbol{\sigma}_{\rm max}\limsup_{k\to\infty}\,(1-2s_k)\, \mathcal{J}^1_{2s_k}(H,A_{1-\delta_1-\varepsilon,1})\,.$$
According to \cite{ADPM}, we have 
\begin{multline*}
\limsup_{k\to\infty}\,(1-2s_k)\, \mathcal{J}^1_{2s_k}(H,A_{1-\delta_1-\varepsilon,1})=\lim_{k\to\infty}\,(1-2s_k)\, \mathcal{J}^1_{2s_k}(H,A_{1-\delta_1-\varepsilon,1})\\
= \omega_{n-1}{\rm Per}(H,A_{1-\delta_1-\varepsilon,1})= \omega_{n-1} \big(1-(1-\bar \delta/2-\varepsilon)^{n-1}\big)\,. 
\end{multline*}
Therefore, 
$$  \Gamma^\sharp_{ij}\leq \Gamma^\flat_{ij} 
+ \frac{3\varepsilon}{2} + \omega_{n-1}\boldsymbol{\sigma}_{\rm max} \big(1-(1-\bar \delta/2-\varepsilon)^{n-1}\big)\,,$$
and the conclusion follows letting first $\bar\delta\to0$, and then $\varepsilon\to0$. 
\end{proof}

We are now ready to prove Proposition \ref{propgammainf}. 

\begin{proof}[Proof of Proposition \ref{propgammainf}.]
Let $\big(\{s_k\}_{k\in\mathbb{N}},\{\mathfrak{E}_k\}_{k\in\mathbb{N}}\big)\in  \mathscr{C}^\sharp_{ij}$ be an arbitrary sequence. Since the value $\mathscr{E}_{2s_k}^{{\boldsymbol{\sigma}}}(\mathfrak{E}_{k},Q)$ does not depend on $\mathfrak{E}_{k}$ outside $Q$, we may assume that $\mathfrak{E}_{k}=(E_{1,k},\ldots, E_{m,k})$ satisfies $E_{i,k}\cap Q^c=H\cap Q^c$, $E_{j,k}\cap Q^c=H^c\cap Q^c$, and $E_{l,k}\cap Q^c=\emptyset$ for $l\not\in\{1,j\}$.  Setting
\begin{multline*}
\overline\Gamma_{ij}(s):=\inf \Big\{ (1-2s)\mathscr{P}^{\bar{\boldsymbol{\sigma}}}_{2s}(\mathfrak{E},Q): 
\mathfrak{E}=(E_{1},\ldots,E_m)\in\mathscr{A}_m^{2s}(Q)\,,\\
E_i\cap Q^c=H\cap Q^c\,,\;E_j\cap Q^c=H^c\cap Q^c\,,\text{ and } E_l\cap Q^c=\emptyset \text{ for }l\not\in\{i,j\}\Big\}\,,
\end{multline*}
we then have 
\begin{multline}\label{vendrcpa-2}
\overline\Gamma_{ij}(s_k) \leq (1-2s_k)\mathscr{P}^{\bar{\boldsymbol{\sigma}}}_{2s_k}(\mathfrak{E}_k,Q)\leq (1-2s_k)\mathscr{P}^{{\boldsymbol{\sigma}}}_{2s_k}(\mathfrak{E}_k,Q)\\
= (1-2s_k)\mathscr{E}^{{\boldsymbol{\sigma}}}_{2s_k}(\mathfrak{E}_k,Q)+(1-2s_k)\mathscr{F}^{{\boldsymbol{\sigma}}}_{2s_k}(\mathfrak{E}_k,Q)\,,
\end{multline}
since $\bar\sigma_{lh}\leq \sigma_{lh}$ for every $l,h\in\{1,\ldots,m\}$. 

By definition of the class  $\mathscr{C}^\sharp_{ij}$, there exists $\bar \delta>0$ such that for all $k\in\mathbb{N}$, 
$E_{l,k}\subset (1-\bar\delta)Q$ for $l\not\in\{i,h\}$, $E_{i,k}\cap A_{1-\bar\delta,1}=H\cap A_{1-\bar\delta,1}$, and $E_{j,k}\cap A_{1-\bar\delta,1}=H^c\cap A_{1-\bar\delta,1}$.  Hence, it easily follows that for $\delta\in(0,\bar\delta)$, 
\begin{multline}\label{vendrcpa-1}
\mathscr{F}^{{\boldsymbol{\sigma}}}_{2s_k}(\mathfrak{E}_k,Q)\\\leq 
\sigma_{ij}\Big[\mathcal{I}_{2s_k}(H\cap A_{1-\delta,1}, H^c\cap A_{1,1+\delta})+ \mathcal{I}_{2s_k}(H^c\cap A_{1-\delta,1}, H\cap A_{1,1+\delta})\Big]+\frac{C m \boldsymbol{\sigma}_{\rm max}}{\delta^{n+2s_k}}\\
\leq 2\boldsymbol{\sigma}_{\rm max}\, \mathcal{I}_{2s_k}(H\cap A_{1-\delta,1+\delta}, H^c\cap A_{1-\delta,1+\delta})+\frac{C m \boldsymbol{\sigma}_{\rm max}}{\delta^{n+2s_k}}\,,
\end{multline}
for a constant $C$ depending only on $n$. On the other hand, it follows from \cite[Lemma 8 \& 9]{ADPM} that 
\begin{equation}\label{vendrcpa}
 \limsup_{k\to\infty}\,(1-2s_k)\mathcal{I}_{2s_k}(H\cap A_{1-\delta,1+\delta}, H^c\cap A_{1-\delta,1+\delta})\leq \omega_{n-1}\big[(1+\delta)^{n-1}-(1-\delta)^{n-1}\big] \,.
 \end{equation}
Combining \eqref{vendrcpa-2}, \eqref{vendrcpa-1}, and \eqref{vendrcpa} leads to  
$$\overline\Gamma_{ij}\leq \liminf_{k\to\infty}\,  (1-2s_k)\mathscr{P}^{\bar{\boldsymbol{\sigma}}}_{2s_k}(\mathfrak{E}_k,Q)\leq  \liminf_{k\to\infty}\, (1-2s_k)\mathscr{E}^{{\boldsymbol{\sigma}}}_{2s_k}(\mathfrak{E}_k,Q)+C\boldsymbol{\sigma}_{\rm max}\delta\,. $$
Letting $\delta\to0$, we deduce that $\overline\Gamma_{ij}\leq  \liminf_{k}\, (1-2s_k)\mathscr{E}^{{\boldsymbol{\sigma}}}_{2s_k}(\mathfrak{E}_k,Q)$. By arbitrariness of $\big(\{s_k\}_{k\in\mathbb{N}},\{\mathfrak{E}_k\}_{k\in\mathbb{N}}\big)$, we derive that $\overline\Gamma_{ij}\leq \Gamma_{ij}^\sharp$, and the conclusion follows from Lemma \ref{lemmasharp}. 
\end{proof}

\section{Minimality  of the half-space}\label{secminhalfsp}

We recall from the previous the previous section that for each $i\not=j$, 
\begin{equation}\label{defsecbargammaij}
\overline\Gamma_{ij}:=\liminf_{s\to 1/2^-} \,\overline\Gamma_{ij}(s)\,\,,
\end{equation}
where 
\begin{multline*}
\overline\Gamma_{ij}(s):=(1-2s)\inf\Big\{\mathscr{P}^{\bar{\boldsymbol{\sigma}}}_{2s}(\mathfrak{E},Q): 
\mathfrak{E}=(E_{1},\ldots,E_m)\in\mathscr{A}_m^{2s}(Q)\,,\\
E_i\cap Q^c=H\cap Q^c\,,\;E_j\cap Q^c=H^c\cap Q^c\,,\text{ and } E_l\cap Q^c=\emptyset \text{ for }l\not\in\{i,j\}\Big\}\,.
 \end{multline*}
We aim to determine for an arbitrary matrix $\bar{\boldsymbol{\sigma}}\in\mathscr{S}_m$ satisfying the triangle inequality if the half space $H$ itself solves the minimization problem above. To be more precise, given $i\not=j$, we consider the competitor $\mathfrak{H}_{ij}=(H_1,\ldots,H_m)\in \mathscr{A}_m^s(Q)$ defined by
$$H_i:=H\,,\quad H_j:=H^c\,,\quad H_l=\emptyset \;\text{ for }l\not\in\{i,j\}\,,$$
and we are interesting in determining if $\mathfrak{H}_{ij}$ solves the minimization problem. According to the next proposition, a positive answer imply the $\Gamma$-convergence of $(1-2s)\mathscr{P}^{{\boldsymbol{\sigma}}}_{2s}(\cdot,\Omega)$   as $s\to 1/2^-$.

\begin{proposition}
Assume that for all $s\in(0,1/2)$,  
\begin{equation}\label{mincondH}
(1-2s)\mathscr{P}^{\bar{\boldsymbol{\sigma}}}_{2s}(\mathfrak{H}_{ij},Q)=\overline\Gamma_{ij}(s)\quad \text{for all $i\not= j$}\,.
\end{equation}
Then $\overline\Gamma_{ij}=\omega_{n-1}\bar\sigma_{ij}$ for all $i\not= j$,  the $\liminf$ in \eqref{defsecbargammaij} is a limit, and as a consequence, the functionals  $(1-2s)\mathscr{P}^{{\boldsymbol{\sigma}}}_{2s}(\cdot,\Omega)$ $\Gamma(L^1(\Omega))$-converge  to $\omega_{n-1}\mathscr{P}^{\bar{\boldsymbol{\sigma}}}_{1}(\cdot,\Omega)$ as $s\to 1/2^-$, i.e., 
\begin{equation}\label{liminf=limsup}
({\mathscr{P}}^{\boldsymbol{\sigma}})_*(\mathfrak{E},\Omega)=({\mathscr{P}}^{\boldsymbol{\sigma}})^*(\mathfrak{E},\Omega)=\omega_{n-1}\mathscr{P}^{\bar{\boldsymbol{\sigma}}}_{1}(\mathfrak{E},\Omega)
\end{equation}
for every partition $\mathfrak{E}=(E_1,\ldots,E_m)$ of $\Omega$ by measurable sets. 
\end{proposition}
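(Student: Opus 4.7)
The proof is essentially a bookkeeping assembly of the material already developed in Sections 3 and 4, so I will proceed in two short steps.

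First, I evaluate $(1-2s)\mathscr{P}^{\bar{\boldsymbol{\sigma}}}_{2s}(\mathfrak{H}_{ij}, Q)$ directly from the definition. Since $H_l = \emptyset$ for every $l \notin \{i,j\}$, all but one pair contribution in the double sum defining $\mathscr{P}^{\bar{\boldsymbol{\sigma}}}_{2s}$ vanishes, and what remains is
\begin{equation*}
\mathscr{P}^{\bar{\boldsymbol{\sigma}}}_{2s}(\mathfrak{H}_{ij}, Q) = \bar\sigma_{ij}\Big[\mathcal{I}_{2s}(H \cap Q, H^c \cap Q) + \mathcal{I}_{2s}(H \cap Q, H^c \cap Q^c) + \mathcal{I}_{2s}(H \cap Q^c, H^c \cap Q)\Big] = \bar\sigma_{ij}\,P_{2s}(H, Q).
\end{equation*}
Multiplying by $(1-2s)$ and invoking the asymptotics $\lim_{s \to 1/2^-}(1-2s)P_{2s}(H, Q) = \omega_{n-1}$ from \eqref{convhalfspace}, I obtain $\lim_{s \to 1/2^-}(1-2s)\mathscr{P}^{\bar{\boldsymbol{\sigma}}}_{2s}(\mathfrak{H}_{ij}, Q) = \omega_{n-1}\bar\sigma_{ij}$. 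Under the minimality hypothesis \eqref{mincondH}, this forces the whole function $s \mapsto \overline{\Gamma}_{ij}(s)$ to converge as $s \to 1/2^-$, and the liminf defining $\overline{\Gamma}_{ij}$ is a genuine limit with value $\overline{\Gamma}_{ij} = \omega_{n-1}\bar\sigma_{ij}$ for every $i \neq j$.

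Second, I sandwich $({\mathscr{P}}^{\boldsymbol{\sigma}})_*$ and $({\mathscr{P}}^{\boldsymbol{\sigma}})^*$ using the two bounds already established. Proposition \ref{propgammainf}, applied with the values just computed, gives
\begin{equation*}
({\mathscr{P}}^{\boldsymbol{\sigma}})_*(\mathfrak{E},\Omega) \geq \mathscr{P}_1^{\overline{\boldsymbol{\Gamma}}}(\mathfrak{E},\Omega) = \omega_{n-1}\mathscr{P}_1^{\bar{\boldsymbol{\sigma}}}(\mathfrak{E},\Omega),
\end{equation*}
while Proposition \ref{Gamma lim supbis} provides the matching upper bound $({\mathscr{P}}^{\boldsymbol{\sigma}})^*(\mathfrak{E},\Omega) \leq \omega_{n-1}\mathscr{P}_1^{\bar{\boldsymbol{\sigma}}}(\mathfrak{E},\Omega)$. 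The trivial inequality $({\mathscr{P}}^{\boldsymbol{\sigma}})_* \leq ({\mathscr{P}}^{\boldsymbol{\sigma}})^*$ (any near-optimal recovery sequence for $({\mathscr{P}}^{\boldsymbol{\sigma}})^*$ along the fixed $\{s_k\}$ is an admissible competitor for $({\mathscr{P}}^{\boldsymbol{\sigma}})_*$, whose limsup dominates its liminf) closes the loop and yields \eqref{liminf=limsup}. Since the sandwich holds for any fixed sequence $s_k \to 1/2^-$, this is precisely the $\Gamma(L^1(\Omega))$-convergence of the whole family $\{(1-2s)\mathscr{P}^{\boldsymbol{\sigma}}_{2s}(\cdot,\Omega)\}_{0<s<1/2}$ to $\omega_{n-1}\mathscr{P}^{\bar{\boldsymbol{\sigma}}}_1(\cdot,\Omega)$ as $s \to 1/2^-$.

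There is no serious obstacle here: the hypothesis \eqref{mincondH} converts the abstract cell formula \eqref{defsecbargammaij} into the explicit pointwise quantity $\bar\sigma_{ij}P_{2s}(H,Q)$, whose asymptotic behavior is known from the single-phase theory, and the matching $\Gamma$-liminf and $\Gamma$-limsup bounds of Sections 3.2 and 4 then immediately lock together. All the genuine analytical work — the blow-up argument, the relaxation on polyhedra, and the two approximation lemmas reducing $\boldsymbol{\Gamma}$ to $\overline{\boldsymbol{\Gamma}}$ — has already been carried out before this statement.
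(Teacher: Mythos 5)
Your proposal is correct and follows essentially the same route as the paper: evaluate $(1-2s)\mathscr{P}^{\bar{\boldsymbol{\sigma}}}_{2s}(\mathfrak{H}_{ij},Q)=\bar\sigma_{ij}(1-2s)P_{2s}(H,Q)$, use the hypothesis \eqref{mincondH} together with the limit \eqref{convhalfspace} to identify $\overline\Gamma_{ij}=\omega_{n-1}\bar\sigma_{ij}$, and then sandwich with Propositions \ref{propgammainf} and \ref{Gamma lim supbis}. The paper's proof is exactly this (stated more tersely), so nothing is missing.
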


\begin{proof}
By assumption, we have
$$\overline\Gamma_{ij}(s)=(1-2s)\mathscr{P}^{\bar{\boldsymbol{\sigma}}}_{2s}(\mathfrak{H}_{ij},Q)=\bar\sigma_{ij}(1-2s)P_{2s}(H,Q) \,,$$
and by  \cite[Lemma 9]{ADPM}, 
$$\lim_{s\to 1/2^-}(1-2s)P_{2s}(H,Q) =\omega_{n-1}\,.$$
Then  \eqref{liminf=limsup} follows from Proposition \ref{Gamma lim supbis} and Proposition \ref{propgammainf}. 
\end{proof}

Hence we are only left with the problem of proving  \eqref{mincondH}, which the purpose of the following theorem.

\begin{theorem}\label{thmminhalspace}
For all $s\in(0,1/2)$ and $i\not=j$,
$$(1-2s)\mathscr{P}^{\bar{\boldsymbol{\sigma}}}_{2s}(\mathfrak{H}_{ij},Q)=\overline\Gamma_{ij}(s)\,.$$
\end{theorem}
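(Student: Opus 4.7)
Fix $s\in(0,1/2)$ and $i\neq j$. Since the partition $\mathfrak{H}_{ij}$ is itself admissible for the minimization defining $\overline\Gamma_{ij}(s)$, the inequality $\overline\Gamma_{ij}(s)\leq(1-2s)\mathscr{P}^{\bar{\boldsymbol{\sigma}}}_{2s}(\mathfrak{H}_{ij},Q)$ is immediate. The task is therefore to prove the reverse inequality, that is, for every admissible competitor $\mathfrak{E}=(E_1,\ldots,E_m)$,
$$\mathscr{P}^{\bar{\boldsymbol{\sigma}}}_{2s}(\mathfrak{E},Q)\geq \bar\sigma_{ij}\,P_{2s}(H,Q)=\mathscr{P}^{\bar{\boldsymbol{\sigma}}}_{2s}(\mathfrak{H}_{ij},Q)\,.$$

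The plan is to exploit the triangle inequality satisfied by $\bar{\boldsymbol{\sigma}}$ (Lemma \ref{lemmrelaxcoeff}) through a one-dimensional coarea formula, playing the role of the LP-dual certificate for the max-flow min-cut theorem applied to the complete graph on $\{1,\ldots,m\}$ with edge capacities $\bar\sigma_{kl}$. To this end, I introduce the potential $\phi\colon\{1,\ldots,m\}\to[0,\bar\sigma_{ij}]$ defined by $\phi(k):=\min(\bar\sigma_{ik},\bar\sigma_{ij})$. A straightforward case analysis using triangle inequality yields the $1$-Lipschitz estimate $|\phi(k)-\phi(l)|\leq\bar\sigma_{kl}$ for all $k,l$; the crucial case is when $\bar\sigma_{ik}\leq\bar\sigma_{ij}<\bar\sigma_{il}$, where $|\phi(k)-\phi(l)|=\bar\sigma_{ij}-\bar\sigma_{ik}\leq\bar\sigma_{kl}$ precisely by triangle inequality. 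Noting $\phi(i)=0$ and $\phi(j)=\bar\sigma_{ij}$, and setting $S_t:=\{k:\phi(k)\leq t\}$ for $t\in[0,\bar\sigma_{ij})$, the level sets are nested with $i\in S_t$ and $j\notin S_t$, so a layer cake representation delivers
$$\bar\sigma_{kl}\geq|\phi(k)-\phi(l)|=\int_0^{\bar\sigma_{ij}}\bigl|\chi_{S_t}(k)-\chi_{S_t}(l)\bigr|\,\de t\,.$$

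Plugging this into the partition energy, using the non-negativity of the interactions $\mathcal{J}_{2s}(E_k,E_l;Q):=\mathcal{I}_{2s}(E_k\cap Q,E_l\cap Q)+\mathcal{I}_{2s}(E_k\cap Q,E_l\cap Q^c)+\mathcal{I}_{2s}(E_k\cap Q^c,E_l\cap Q)$ to swap sum and integral, and using the symmetry $\mathcal{J}_{2s}(E_k,E_l;Q)=\mathcal{J}_{2s}(E_l,E_k;Q)$ to cancel the factor $\tfrac{1}{2}$, I obtain
$$\mathscr{P}^{\bar{\boldsymbol{\sigma}}}_{2s}(\mathfrak{E},Q)\geq\int_0^{\bar\sigma_{ij}}\Bigl(\sum_{k\in S_t,\,l\notin S_t}\mathcal{J}_{2s}(E_k,E_l;Q)\Bigr)\de t=\int_0^{\bar\sigma_{ij}}P_{2s}(F_{S_t},Q)\,\de t\,,$$
where $F_{S_t}:=\bigcup_{k\in S_t}E_k$. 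The exterior conditions on $\mathfrak{E}$ combined with $i\in S_t$, $j\notin S_t$, and $E_l\cap Q^c=\emptyset$ for $l\notin\{i,j\}$ give $F_{S_t}\cap Q^c=H\cap Q^c$ for every such $t$; hence $F_{S_t}$ is an admissible competitor for the scalar minimization of $P_{2s}(\cdot,Q)$ with exterior datum $H$. Invoking the known minimality of the half-space for the fractional perimeter from \cite{CRS,ADPM}, namely $P_{2s}(F_{S_t},Q)\geq P_{2s}(H,Q)$, and integrating in $t$ over $[0,\bar\sigma_{ij}]$ yields the claim.

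The key conceptual step is the construction of the potential $\phi$: its existence encodes the triangle inequality for $\bar{\boldsymbol{\sigma}}$ and supplies the required min-cut witness in the graph-theoretic sense. This is precisely why the relaxation from $\boldsymbol{\sigma}$ to $\bar{\boldsymbol{\sigma}}$ is crucial, for without triangle inequality no such $1$-Lipschitz potential with $\phi(j)-\phi(i)=\sigma_{ij}$ exists, and the half-space partition is not expected to minimize. Everything else in the argument (the coarea decomposition, the identification of the cut energy with a scalar perimeter, and the appeal to the scalar minimality of $H$) is mechanical once the potential is in hand.
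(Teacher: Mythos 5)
Your proof is correct, and it takes a genuinely different route from the paper. The paper proves the theorem through a \emph{replacement lemma}: it encodes the competitor $\mathfrak{E}$ as a weighted complete graph whose edge capacities are the pairwise interaction energies, takes a minimal cut separating $v_i$ from $v_j$, merges the chambers accordingly into a two-phase competitor $\mathfrak{F}$, and shows $\mathscr{P}^{\bar{\boldsymbol{\sigma}}}_{2s}(\mathfrak{F},Q)\leq \mathscr{P}^{\bar{\boldsymbol{\sigma}}}_{2s}(\mathfrak{E},Q)$ via the max-flow min-cut theorem combined with a flow decomposition into simple paths (along which the triangle inequality is applied edge by edge); only then is the scalar minimality of $H$ invoked once, for $\mathfrak{F}$. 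You instead produce the LP-dual certificate directly on the \emph{coefficient} side: the truncated distance $\phi(k)=\min(\bar\sigma_{ik},\bar\sigma_{ij})$ is $1$-Lipschitz for the pseudometric $\bar{\boldsymbol{\sigma}}$, its sublevel sets give a nested family of cuts separating $i$ from $j$ with total mass exactly $\bar\sigma_{ij}$, and the resulting cut-measure \emph{minorant} of $\bar\sigma_{kl}$ suffices because the interactions are non-negative — this is strictly weaker than the exact $\ell_1$/cut-cone decomposition the paper needs in its special-case discussion, which is precisely why your argument covers general $m$ without graph theory. The trade-offs: your argument is shorter, self-contained, and applies the scalar minimality of $H$ to the (at most $m-1$ distinct) unions $F_{S_t}$ rather than to a single optimally chosen union, so it does not by itself yield the paper's structural replacement lemma (an energy-decreasing two-phase replacement for an \emph{arbitrary} competitor), which the authors may want for other purposes; conversely, the paper's route requires the careful flow/path bookkeeping (removal of opposite arcs and cycles) that your potential sidesteps entirely. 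All the supporting steps in your sketch check out: the case analysis for the Lipschitz bound, the layer-cake identity on $[0,\bar\sigma_{ij}]$, the Tonelli interchange, the identification of the cut energy with $P_{2s}(F_{S_t},Q)$ using that $\mathfrak{E}$ is a partition of $\R^n$, and the exterior condition $F_{S_t}\cap Q^c=H\cap Q^c$ coming from $i\in S_t$, $j\notin S_t$, $E_l\subset Q$ for $l\notin\{i,j\}$.
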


We shall prove this theorem in the very last subsection. Before this, we would like to prove it for some very specific cases of $\bar{\boldsymbol{\sigma}}$ for which 
the functional $\mathscr{P}^{\bar{\boldsymbol{\sigma}}}_{2s}$ can be represented in a different way in the spirit of \cite{EseOtt}. We believe that these representations can be useful for different purposes, 
for instance when studying the (local) regularity  of minimizers, and it motivates our discussion.

\subsection{The additive case}

We say that the matrix $\bar{\boldsymbol{\sigma}}$ is {\sl additive} if there exist values $\alpha_1,\ldots,\alpha_m\geq 0$ such that
$$\bar\sigma_{ij}=\alpha_i+\alpha_j \quad\text{for all $i\not=j$}\,.$$
In this case, an elementary computation shows that 
$$ \mathscr{P}^{\bar{\boldsymbol{\sigma}}}_{2s}(\mathfrak{E},\Omega)= \sum_{k=1}^m\alpha_k P_{2s}(E_k,\Omega)$$
for every partition $\mathfrak{E}=(E_1,\ldots,E_m)\in\mathscr{A}_m^s(\Omega)$. 
\vskip5pt

Therefore, if $\mathfrak{E}=(E_1,\ldots,E_m)\in\mathscr{A}_m^s(Q)$ is a competitor for  $\overline\Gamma_{ij}(s)$, then 
$$ \mathscr{P}^{\bar{\boldsymbol{\sigma}}}_{2s}(\mathfrak{E},Q)\geq \alpha_i P_{2s}(E_i,Q)+\alpha_j P_{2s}(E_j,Q)\,.$$
By the minimality of $H$ proved in \cite[Proposition 17]{ADPM}, and since $E_1\cap Q^c=H\cap Q^c$, $E_j\cap Q^c=H^c\cap Q^c$, we have 
$$ \mathscr{P}^{\bar{\boldsymbol{\sigma}}}_{2s}(\mathfrak{E},Q)\geq \alpha_i P_{2s}(H,Q)+\alpha_j P_{2s}(H^c,Q)=(\alpha_i+\alpha_j)P_{2s}(H,Q)
= \mathscr{P}^{\bar{\boldsymbol{\sigma}}}_{2s}(\mathfrak{H}_{ij},Q)\,,$$
and the minimality of $\mathfrak{H}_{ij}$ follows.

\subsection{The $3$-phases and $4$-phases cases}

\subsubsection{The $3$-phases case} For $m=3$, the triangle inequality \eqref{traingineqlem} satisfied by $\bar{\boldsymbol{\sigma}}$ implies that $\bar{\boldsymbol{\sigma}}$ is actually additive. Indeed, one has $\bar\sigma_{ij}=\alpha_i+\alpha_j$ with 
$$ \alpha_1 = \frac{\bar\sigma_{12} + \bar\sigma_{13} - \bar\sigma_{23}}{2}\,,\;  \alpha_2 = \frac{\bar\sigma_{12} + \bar\sigma_{23} - \bar\sigma_{13}}{2}\,,\;\text{ and } \alpha_3 = \frac{\bar\sigma_{23} + \bar\sigma_{13} - \bar\sigma_{12}}{2}\,.$$
Therefore, the minimality of  $\mathfrak{H}_{ij}$ for $m=3$ follows from the additive case.

\subsubsection{The $4$-phases case} 

For $m=4$, the matrix  $\bar{\boldsymbol{\sigma}}$ may no longer be additive, but a quite similar property actually holds. Following computations from  \cite{BM}, we consider the set of coefficients 
$$  \left\{
    \begin{array}{ll}
         \tilde{\alpha}_1 = \frac{1}{2}(\bar\sigma_{12} + \bar\sigma_{13} + \bar\sigma_{14})\\[5pt]
         \tilde{\alpha}_2 = \frac{1}{2}(\bar\sigma_{12} +\bar\sigma_{23} + \bar\sigma_{24})\\[5pt]
         \tilde{\alpha}_3 = \frac{1}{2}(\bar\sigma_{13} + \bar\sigma_{23} + \bar\sigma_{34}) \\[5pt]
         \tilde{\alpha}_4 = \frac{1}{2}(\bar\sigma_{14} + \bar\sigma_{24} + \bar\sigma_{34})
    \end{array}
\right.
\text{ and }
 \left\{
    \begin{array}{ll}
         \tilde{\alpha}_5 = \frac{1}{2}(\bar\sigma_{12} + \bar\sigma_{34}) \\[5pt]
         \tilde{\alpha}_6 = \frac{1}{2}(\bar\sigma_{13} + \bar\sigma_{24}) \\[5pt]
         \tilde{\alpha}_7 = \frac{1}{2}(\bar\sigma_{14} + \bar\sigma_{23}) \\[5pt]
    \end{array}
\right. \,,  $$
which satisfy 
$$  \left\{
    \begin{array}{ll}
        \bar\sigma_{12} =  (\tilde{\alpha}_1+ \tilde{\alpha}_2) - (\tilde{\alpha}_6+ \tilde{\alpha}_7)\\[5pt]
         \bar\sigma_{13}  =   (\tilde{\alpha}_1+ \tilde{\alpha}_3) - (\tilde{\alpha}_5+ \tilde{\alpha}_7)\\[5pt]
        \bar\sigma_{14}  = (\tilde{\alpha}_1+ \tilde{\alpha}_4) - (\tilde{\alpha}_5+ \tilde{\alpha}_6)\\[5pt]
         \bar\sigma_{23}  =   (\tilde{\alpha}_2+ \tilde{\alpha}_3) -(\tilde{\alpha}_5+ \tilde{\alpha}_6) \\[5pt]
           \bar\sigma_{24}  = (\tilde{\alpha}_2+ \tilde{\alpha}_4) - (\tilde{\alpha}_5+ \tilde{\alpha}_7)\\[5pt]
             \bar\sigma_{34}  = (\tilde{\alpha}_3+ \tilde{\alpha}_4) - (\tilde{\alpha}_6+ \tilde{\alpha}_7)
    \end{array}
\right.\,.$$
Based on these relations, a direction computation shows that 
\begin{multline*}
 \mathscr{P}^{\bar{\boldsymbol{\sigma}}}_{2s}(\mathfrak{E},\Omega)=\sum_{k=1}^4 \tilde{\alpha}_kP_{2s}(E_k,\Omega)\\
  - \Big[\tilde{\alpha}_5P_{2s}(E_1\cup E_2,\Omega)
+ \tilde{\alpha}_6P_{2s}(E_1\cup E_3,\Omega)+ \tilde{\alpha}_7P_{2s}(E_1\cup E_4,\Omega)\Big]\,.
\end{multline*}
for every partition $\mathfrak{E}=(E_1,\ldots,E_4)\in\mathscr{A}_4^s(\Omega)$.  
On the other hand, one may observe that 
$$ \sum \limits_{k= 1}^4 P_{2s}(E_k,\Omega) = P_{2s}(E_1 \cup E_2,\Omega) + P_{2s}(E_1 \cup E_3,\Omega) + P_{2s}(E_1 \cup E_4,\Omega) \,.$$ 
Therefore, introducing a further parameter $\alpha_*\geq 0$, we have 
\begin{multline*}
 \mathscr{P}^{\bar{\boldsymbol{\sigma}}}_{2s}(\mathfrak{E},\Omega)=\sum_{k=1}^4 (\tilde{\alpha}_k-\alpha_*)P_{2s}(E_k,\Omega)\\
  + \Big[(\alpha_*-\tilde{\alpha}_5)P_{2s}(E_1\cup E_2,\Omega)
+(\alpha_*- \tilde{\alpha}_6)P_{2s}(E_1\cup E_3,\Omega)+ (\alpha_*-\tilde{\alpha}_7)P_{2s}(E_1\cup E_4,\Omega)\Big]\,.
\end{multline*}
Choosing 
$$\alpha_*:=\max\{\tilde{\alpha}_5,\tilde{\alpha}_6,\tilde{\alpha}_7\}\,, $$
we notice that the triangle inequality \eqref{traingineqlem} implies that $\tilde{\alpha}_k-\alpha_*\geq 0$ for each $k\in\{1,2,3,4\}$. 
\vskip5pt

Back to the minimality problem of the half space, we may assume that $i=1$ and $j=2$, relabelling the indices if necessary.  Then, if $\mathfrak{E}=(E_1,\ldots,E_4)\in\mathscr{A}_4^s(Q)$ 
is a competitor for $\overline\Gamma_{12}(s)$, we have 
\begin{multline*}
 \mathscr{P}^{\bar{\boldsymbol{\sigma}}}_{2s}(\mathfrak{E},Q)\geq  
(\tilde{\alpha}_1-\alpha_*)P_{2s}(E_1,Q)+ (\tilde{\alpha}_2-\alpha_*)P_{2s}(E_2,Q)\\
+(\alpha_*- \tilde{\alpha}_6)P_{2s}(E_1\cup E_3,Q)+(\alpha_*- \tilde{\alpha}_6)P_{2s}(E_1\cup E_4,Q)\,.
\end{multline*}
Again, by the minimality of $H$  in \cite[Proposition 17]{ADPM} and the facts that $E_1\cap Q^c=H\cap Q^c$, $E_2\cap Q^c=H^c\cap Q^c$, $(E_1\cup E_3)\cap Q^c=H\cap Q^c$,  $(E_1\cup E_4)\cap Q^c=H\cap Q^c$, we deduce that 
\begin{align*}
 \mathscr{P}^{\bar{\boldsymbol{\sigma}}}_{2s}(\mathfrak{E},Q)& \begin{multlined}[t]
 \geq  (\tilde{\alpha}_1-\alpha_*)P_{2s}(H,Q)+ (\tilde{\alpha}_2-\alpha_*)P_{2s}(H^c,Q)\\[5pt]
\qquad\qquad +(\alpha_*- \tilde{\alpha}_6)P_{2s}(H,Q)+(\alpha_*- \tilde{\alpha}_6)P_{2s}(H,Q)
\end{multlined} \\
&=(\tilde{\alpha}_1+\tilde{\alpha}_2- \tilde{\alpha}_6- \tilde{\alpha}_7)P_{2s}(H,Q)\\
&=  \mathscr{P}^{\bar{\boldsymbol{\sigma}}}_{2s}(\mathfrak{H}_{12},Q) \,,
\end{align*}
whence the announced minimality of $\mathfrak{H}_{12}$.

\subsection{The $\ell_1$-embeddable case}

For $m\geq 5$, there are in general no suitable decompositions as in the cases $m=3$ and $m=4$. The natural assumption under which such decomposition  
holds is requiring that the matrix $\bar{\boldsymbol{\sigma}}$ is $\ell_1$-embeddable. This condition means that we can find a dimension $d\geq 1$ and $m$ points ${\bf a}_1,\ldots,{\bf a}_m\in \R^d$ 
such that $\bar\sigma_{ij}=\|{\bf a}_i-{\bf a}_j\|_1$ for all $i,j\in\{1,\ldots,m\}$, where $\|\cdot\|_1$ denotes the usual $1$-norm in $\R^d$.  It is well known that for $m\in\{3,4\}$, any matrix $\bar{\boldsymbol{\sigma}}$ satisfying the triangle inequality  \eqref{traingineqlem} is  $\ell_1$-embeddable (see e.g. \cite{DezLau} and above). In turn, requiring that $\bar{\boldsymbol{\sigma}}$ is $\ell_1$-embeddable is equivalent to requiring that $\bar{\boldsymbol{\sigma}}$ belongs to the  {\sl cut cone} (see e.g. \cite{DezLau}), i.e., it can be written as 
$$\bar{\boldsymbol{\sigma}}=\sum_{J\subset\{1,\ldots,m\}}\lambda_J\,\boldsymbol{\delta}^J $$
for some coefficients $\lambda_J\geq 0$, where each $\boldsymbol{\delta}^J=(\delta_{ij}^J)$ is the {\sl cut matrix} defined by 
$$ \delta_{ij}^J=1\text{ if }{\rm Card}(J\cap\{i,j\})=1\,,\text{ and }\, \delta_{ij}^J=0\text{ otherwise}\,.$$
Therefore, if $\bar{\boldsymbol{\sigma}}$ is $\ell_1$-embeddable, then a direct computation shows that 
$$ \mathscr{P}^{\bar{\boldsymbol{\sigma}}}_{2s}(\mathfrak{E},\Omega)=\sum_{J\subset\{1,\ldots,m\}} \lambda_JP_{2s}\Big(\bigcup_{k\in J}E_k,\Omega\Big) $$
for every partition $\mathfrak{E}=(E_1,\ldots,E_m)\in\mathscr{A}_m^s(\Omega)$.  
\vskip5pt

Concerning the minimality of $\mathfrak{H}_{ij}$ in the $\ell_1$-embeddable case, we consider again an arbitrary competitor $\mathfrak{E}=(E_1,\ldots,E_m)\in\mathscr{A}_m^s(Q)$ 
 for $\overline\Gamma_{ij}(s)$. Given $J\subset\{1,\ldots,m\}$, we observe that for $i\in J$ and $j\not\in J$, we have $\bigcup_{k\in J}E_k\cap Q^c=H\cap Q^c$, while $\bigcup_{k\in J}E_k\cap Q^c=H^c\cap Q^c$ for $i\not\in J$ and $j\in J$. Still by the minimality of $H$  in \cite[Proposition 17]{ADPM}, we deduce that 
\begin{align*}
 \mathscr{P}^{\bar{\boldsymbol{\sigma}}}_{2s}(\mathfrak{E},Q)&\geq 
\sum_{J\subset\{1,\ldots,m\}: i\in J,j\not \in J} \lambda_JP_{2s}\Big(\bigcup_{k\in J}E_k,Q\Big)
+\sum_{J\subset\{1,\ldots,m\}: i\not\in J,j \in J} \lambda_J P_{2s}\Big(\bigcup_{k\in J}E_k,Q\Big)\\
& \geq \sum_{J\subset\{1,\ldots,m\}: i\in J,j\not \in J} \lambda_JP_{2s}(H,Q)
+\sum_{J\subset\{1,\ldots,m\}: i\not\in J,j \in J} \lambda_JP_{2s}(H^c,Q)\\
& = \Big(\sum_{J\subset\{1,\ldots,m\}}\lambda_J\delta_{ij}^J\Big)P_{2s}(H,Q)\\
& =  \mathscr{P}^{\bar{\boldsymbol{\sigma}}}_{2s}(\mathfrak{H}_{ij},Q)\,,
\end{align*}
and the minimality of $\mathfrak{H}_{ij}$ follows once again.

\subsection{The general case}

As mentioned above, for $m$ and $\bar{\boldsymbol{\sigma}}$ arbitrary, there are no suitable decompositions of $\mathscr{P}^{\bar{\boldsymbol{\sigma}}}_{2s}$ we can rely on, and we need a (slightly)  different approach. The proof of Theorem \ref{thmminhalspace} in the general case still makes use of the minimality of $H$ in the case $m=2$ taken from \cite{CRS}  or \cite[Proposition~17]{ADPM}, and it crucially rests on the following {\sl replacement} lemma. 

\begin{lemma}\label{leonardilemma}
Let $\mathfrak{E}\in\mathscr{A}_m^s(Q)$ be an arbitrary competitor for $\overline\Gamma_{ij}(s)$. There exists an admissible competitor $\mathfrak{F}=(F_{1},\ldots,F_m)\in\mathscr{A}_m^s(Q)$
 for $\overline\Gamma_{ij}(s)$ such that $F_l=\emptyset$ for $l\not\in\{i,j\}$ and
 $$\mathscr{P}^{\bar{\boldsymbol{\sigma}}}_{2s}(\mathfrak{F},Q)\leq \mathscr{P}^{\bar{\boldsymbol{\sigma}}}_{2s}(\mathfrak{E},Q)\,. $$
\end{lemma}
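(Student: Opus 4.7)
The plan is to seek a subset $A\subset\{1,\ldots,m\}$ with $i\in A$ and $j\notin A$, and to set $F_i:=\bigcup_{k\in A}E_k$, $F_j:=\bigcup_{l\notin A}E_l$, and $F_l:=\emptyset$ for $l\notin\{i,j\}$. Since $E_l\cap Q^c=\emptyset$ for $l\notin\{i,j\}$ while $E_i\cap Q^c=H\cap Q^c$ and $E_j\cap Q^c=H^c\cap Q^c$, the partition $\mathfrak{F}=(F_1,\ldots,F_m)$ automatically satisfies the exterior conditions required of competitors for $\overline{\Gamma}_{ij}(s)$. The whole task is thus to produce $A$ for which $\mathscr{P}_{2s}^{\bar{\boldsymbol{\sigma}}}(\mathfrak{F},Q)\leq \mathscr{P}_{2s}^{\bar{\boldsymbol{\sigma}}}(\mathfrak{E},Q)$.

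Setting, for $k\neq l$,
$$\mathcal{J}_{kl}:=\mathcal{I}_{2s}(E_k\cap Q,E_l\cap Q)+\mathcal{I}_{2s}(E_k\cap Q,E_l\cap Q^c)+\mathcal{I}_{2s}(E_k\cap Q^c,E_l\cap Q),$$
which is symmetric in $(k,l)$, the pairwise disjointness of the $E_k$'s together with the bilinearity of $\mathcal{I}_{2s}$ yields
$$\mathscr{P}_{2s}^{\bar{\boldsymbol{\sigma}}}(\mathfrak{E},Q)=\sum_{\{k,l\}}\bar\sigma_{kl}\,\mathcal{J}_{kl}\quad\text{and}\quad\mathscr{P}_{2s}^{\bar{\boldsymbol{\sigma}}}(\mathfrak{F},Q)=\bar\sigma_{ij}\sum_{k\in A,\,l\in A^c}\mathcal{J}_{kl},$$
the first sum running over unordered pairs $\{k,l\}\subset\{1,\ldots,m\}$ with $k\neq l$. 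The desired inequality is therefore equivalent to bounding the minimum $i$-$j$ cut in the complete graph on $\{1,\ldots,m\}$ with edge capacities $c_{kl}:=\mathcal{J}_{kl}$ by $\frac{1}{\bar\sigma_{ij}}\sum_{\{k,l\}}\bar\sigma_{kl}\,\mathcal{J}_{kl}$.

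This bound is where the max-flow min-cut theorem enters, combined with the triangle inequality for $\bar{\boldsymbol{\sigma}}$ from Lemma~\ref{lemmrelaxcoeff}. By duality, the min cut equals the maximum value $v$ of an $i$-$j$ flow $(f_{kl})$ satisfying $0\leq f_{kl}\leq c_{kl}$. Decomposing any such flow into positive contributions along simple $i$-$j$ paths, $f=\sum_p g_p\,\mathbf{1}_p$ with $v=\sum_p g_p$, and invoking the triangle inequality along each path $p:i=i_0,\ldots,i_{H_p}=j$ in the form $\bar\sigma_{ij}\leq \sum_{h=0}^{H_p-1}\bar\sigma_{i_hi_{h+1}}$, one obtains
$$v\,\bar\sigma_{ij}\leq\sum_p g_p\sum_{h=0}^{H_p-1}\bar\sigma_{i_hi_{h+1}}=\sum_{\{k,l\}}\bar\sigma_{kl}\,f_{kl}\leq \sum_{\{k,l\}}\bar\sigma_{kl}\,\mathcal{J}_{kl}.$$
Dividing by $\bar\sigma_{ij}>0$ provides the claimed estimate on the max flow, hence on the min cut; choosing an $A$ that achieves this minimum then yields the desired $\mathfrak{F}$.

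The main obstacle is essentially conceptual: the translation of a continuous variational inequality into a finite combinatorial min-cut problem on the complete graph over the phases, with the correct choice of capacities $\mathcal{J}_{kl}$. Once this reduction is carried out, the path decomposition of $i$-$j$ flows combined with the triangle inequality for $\bar{\boldsymbol{\sigma}}$ enters in an essentially telescoping way, and max-flow/min-cut duality closes the estimate. In particular, no delicate measure-theoretic gluing is needed here, in contrast with the constructions of the preceding sections.
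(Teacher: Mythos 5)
Your proposal is correct and follows essentially the same route as the paper: the same capacities $\mathcal{J}_{kl}$ (the paper's $p_{kl}$), the same construction of $\mathfrak{F}$ by merging the chambers on the two sides of a minimal $i$--$j$ cut, and the same combination of max-flow/min-cut duality with a path decomposition and the triangle inequality for $\bar{\boldsymbol{\sigma}}$. The only point to tighten is the middle step $\sum_p g_p\sum_h\bar\sigma_{i_hi_{h+1}}=\sum_{\{k,l\}}\bar\sigma_{kl}f_{kl}$, which should be an inequality $\leq$ unless one first arranges (as the paper does) that the path decomposition never uses an edge and its reverse simultaneously and saturates $f$; with $\leq$ there the chain still closes via the capacity constraint.
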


We postpone at the end of the subsection the proof of this lemma. 

\begin{proof}[Proof of Theorem \ref{thmminhalspace}]
Consider an arbitrary $\mathfrak{E}\in\mathscr{A}_m^s(Q)$ for $\overline\Gamma_{ij}(s)$. Then Lemma \ref{leonardilemma} yields 
$$\mathscr{P}^{\bar{\boldsymbol{\sigma}}}_{2s}(\mathfrak{E},Q)\geq  \mathscr{P}^{\bar{\boldsymbol{\sigma}}}_{2s}(\mathfrak{F},Q)=\bar\sigma_{ij}P_{2s}(F_i,Q)\,, $$
since $F_j=(F_i)^c$ and $F_l=\emptyset$ for $l\not\in\{i,j\}$. On the other hand, since $F_i\cap Q^c=H\cap Q^c$,  \cite[Proposition 17]{ADPM} tells us that 
$$\bar\sigma_{ij}P_{2s}(F_i,Q)\geq \bar\sigma_{ij}P_{2s}(H,Q)=\mathscr{P}^{\bar{\boldsymbol{\sigma}}}_{2s}(\mathfrak{H}_{ij},Q)\,.$$
Therefore $\mathscr{P}^{\bar{\boldsymbol{\sigma}}}_{2s}(\mathfrak{E},Q)\geq \mathscr{P}^{\bar{\boldsymbol{\sigma}}}_{2s}(\mathfrak{H}_{ij},Q)$, and the conclusion follows. 
\end{proof}

\begin{remark}[Uniqueness]
It is proved in  \cite[Proposition 17]{ADPM} that  $H$ is in fact the unique minimizer for the two phases problem. As a consequence, by the argument above, $\mathfrak{H}_{ij}$ 
is even the  unique minimizer for $\overline\Gamma_{ij}(s)$. 
\end{remark}

Concerning the proof of Lemma  \ref{leonardilemma}, we shall follow closely an original argument from~\cite{Leo}  (in the context of the usual perimeter for partitions) where 
a competitor is represented as a directed graph, and the abstract construction of a better competitor is obtained by the {\sl max-flow min-cut  theorem} from graph theory (see e.g. \cite{Bon}). To be as clear as possible and self contained, we shall borrow (almost all) the notations from \cite{Leo} and the basics from graph theory. 
\vskip5pt

A direct graph $G$ is a finite set of vertices $\{v_i\}^m_{i=1}$ connected by oriented arcs. An arc (or edge) joining $v_i$ to $v_j$ is an ordered pair ${\rm e}_{ij}:=(v_i,v_j)$ where $v_i$ is called the tail, and $v_j$ the head. The set $A$ of all arcs is then a subset of $G\times G$. We say that the graph $G$ is weighted if a number $p_{ij}\geq 0$ is associated to each arc ${\rm e}_{ij}$, and the fonction ${\rm c}:A\to[0,\infty)$ given by ${\rm c}({\rm e}_{ij})=p_{ij}$ is called capacity function. A pair $(G,{\rm c})$ is called a network. 

We shall be concerned with a network $(G,{\rm c})$ where each vertex $v_i$ has no arc to itself (i.e., ${\rm e}_{ii}\not\in A$), and every $v_i\not=v_j$ are connected in both ways (i.e., ${\rm e}_{ij}\in A$ and ${\rm e}_{ji}\in A$). In other words, $A=G\times G\setminus\Delta$ where $\Delta$ is the diagonal of $G\times G$, and $G$ is said to be completely connected.  The capacity function ${\rm c}$ will be chosen symmetric, that is $p_{ij}=p_{ji}$, and allowed to vanish.    

Given such a network $(G,{\rm c})$ and a pair of vertices ${\rm s}$ and ${\rm t}$ (called the source and the sink), a function $f:A\to[0,\infty)$ is said to be a flow from ${\rm s}$ to ${\rm t}$ if 
\begin{itemize}
\item[1)] ({\sl Capacity constraint}) $f({\rm e}_{ij})\leq {\rm c}({\rm e}_{ij})=p_{ij}$ for all ${\rm e}_{ij}\in A$; 
\item[2)] ({\sl Conservation of flows})  the (so-called) node function $\phi_f:G\to\R$ defined by
$$\phi_f(v_i):=\sum_{j\not= j}f({\rm e}_{ij})-f({\rm e}_{ji}) $$ 
satisfies $\phi_f(v_i)=0$ for $v_i\not\in\{{\rm s},{\rm t}\}$, and $\phi_f({\rm s})=-\phi_f({\rm t})\geq 0$. 
\end{itemize}
The value $\phi_f({\rm s})$ is called intensity of $f$, and we shall write it as $\|f\|:=\phi_f({\rm s})$. 

Then, a bipartition $\mathcal{K}=(K_1,K_2)$ of $G$ is said to be a cut with respect to ${\rm s}$ and ${\rm t}$ if ${\rm s}\in K_1$ and ${\rm t}\in K_2$. The size (or length) of  $\mathcal{K}$ is defined by 
$$\mathscr{L}(\mathcal{K}):= \sum_{i\in K_1\,,\;j\in K_2}p_{ij}\,.$$ 
It turns out that $\|f\|\leq \mathscr{L}(\mathcal{K})$ for any flow $f$ from ${\rm s}$ to ${\rm t}$ and any cut $\mathcal{K}$ with respect to ${\rm s}$ and ${\rm t}$. 

Finally, we say that a flow $f$ from ${\rm s}$ to ${\rm t}$ is a maximum flow if $\|f\|\geq \|f^\prime\|$ for any other flow $f^\prime$ from ${\rm s}$ to ${\rm t}$. Similarly, a cut $\mathcal{K}$ with respect to ${\rm s}$ and ${\rm t}$ is said to be minimal if $\mathscr{L}(\mathcal{K})\leq \mathscr{L}(\mathcal{K}^\prime)$ for any other cut $\mathcal{K}^\prime$ with respect to ${\rm s}$ and ${\rm t}$. Maximum flows  and minimal cuts exist and algorithms to compute them lead to the famous max-flow min-cut theorem asserting that  $\|f\|=\mathscr{L}(\mathcal{K})$ whenever $f$ is a maximum flow 
from ${\rm s}$ to ${\rm t}$ and $\mathcal{K}$ is minimal cut  with respect to ${\rm s}$ and ${\rm t}$. 
\vskip5pt

We still need one last ingredient from \cite{Leo}, which is a suitable ``flow decomposition''. First, define a path from ${\rm s}$ to ${\rm t}$ to be an ordered $d$-tuple (for some $d\geq 1$) of arcs 
$\gamma=({\rm e}_{j_0j_1},\ldots,{\rm e}_{j_{d-1}j_d})$ such that $v_{j_0}={\rm s}$, $v_{j_d}={\rm t}$, and $v_{i_k}\not=v_{i_{l}}$ for $k\not=l$ (i.e., no self intersections). By \cite[Theorem~4.6]{Leo}, given an arbitrary flow 
$f$ from  ${\rm s}$ to ${\rm t}$, there exist paths $\gamma_1,\ldots,\gamma_h$ from ${\rm s}$ to ${\rm t}$ and non negative constants $\alpha_1(f),\ldots,\alpha_h(f)$ such that 
\begin{itemize}
\item[1)] $\displaystyle \|f\|=\sum_{i=1}^h\alpha_i(f)\,$; 

\item[2)] $\displaystyle \sum_{\{i : {\rm e}_{kl}\in \gamma_i\}}\alpha_i(f)\leq f({\rm e}_{kl})\leq p_{kl}\,$ for all arc ${\rm e}_{kl}\in E$. 
\end{itemize}
In addition, we observe that the family $\gamma_1,\ldots,\gamma_h$ can be chosen with the following property: {\sl if an arc ${\rm e}_{kl}$ belongs to some path $\gamma_i$, then 
the opposite arc ${\rm e}_{lk}$ do not belong to any of the $\gamma_j$'s}. Indeed, assume that ${\rm e}_{kl}\in\gamma_i$ and  ${\rm e}_{lk}\in \gamma_j$. 
By  the no self intersections condition, $j\not=i$ and $k,l\not\in\{{\rm s},{\rm t}\}$ (i.e., ${\rm e}_{kl}$ and ${\rm e}_{lk}$ cannot be the first or the last arc of each path).  
Then we write $\gamma_i=(\gamma^{-}_i,{\rm e}_{kl},\gamma_i^+)$ and $\gamma_j=(\gamma^{-}_j,{\rm e}_{lk},\gamma_j^+)$, and we first replace $\gamma_i$ by 
$\widetilde\gamma_i=(\gamma^{-}_i,\gamma_j^+)$ and $\gamma_j$ by $\widetilde\gamma_j=(\gamma^{-}_j,\gamma_i^+)$. The resulting $\widetilde\gamma_i$ and $\widetilde\gamma_j$ 
may contain self intersections, i.e., repeating vertices (necessarily distinct from ${\rm s}$ and ${\rm t}$), that are {\sl cycles}. Removing those cycles, we obtain two  paths $\widehat\gamma_i$ and $\widehat\gamma_j$  from ${\rm s}$ to ${\rm t}$ which do not contain ${\rm e}_{kl}$ or ${\rm e}_{lk}$. Repeating 
this operation as many times as necessary (recall that the graph is finite), we obtain a family of paths satisfying the required property. Since the procedure only removes arcs from the original family, inequality 2) 
above remains valid, and can be rephrased as 
\begin{itemize}
\item[$2)^\prime$] $\displaystyle \sum_{\{i : {\rm e}_{kl}\in \gamma_i\}}\alpha_i(f) +  \sum_{\{i : {\rm e}_{lk}\in \gamma_i\}}\alpha_i(f)\leq p_{kl}\,$ for all arc ${\rm e}_{kl}\in E$. 
\end{itemize}
\vskip5pt

We are now ready to prove Lemma \ref{leonardilemma}.

\begin{proof}[Proof of Lemma  \ref{leonardilemma}]
Without loss of generality, we may assume for simplicity that $i=1$ and $j=2$, and we consider $\mathfrak{E}=(E_1,\ldots,E_m)\in\mathscr{A}_m^s(Q)$  a competitor for $\overline\Gamma_{12}(s)$. Then we represent $\mathfrak{E}$ as the completely connected directed graph $G$ where each vertex $v_i$ corresponds to the chamber~$E_i$. To make it a network, we define the capacity 
function ${\rm c}:A\to[0,\infty)$ to be 
$${\rm c}({\rm e}_{ij})=p_{ij}:= \mathcal{I}_{2s}(E_i\cap Q,E_j\cap Q)+ \mathcal{I}_{2s}(E_i\cap Q,E_j\cap Q^c)+ \mathcal{I}_{2s}(E_i\cap Q^c,E_j\cap Q)\,.$$
Next we consider a minimal cut $\mathcal{K}=(K_1,K_2)$ with respect to  $v_1$ and $v_2$. It allows us to define $\mathfrak{F}=(F_1,\ldots,F_m)\in\mathscr{A}_m^s(Q)$ by 
$$F_i:=\begin{cases} 
\displaystyle \bigcup_{k \in K_i} E_k & \text{for $i\in\{1,2\}$}\,,\\[8pt]
\;\;\emptyset & \text{for $k\geq 3$}\,.
\end{cases}$$
Notice that $F_1\cap Q^c=H\cap Q^c$ and $F_2\cap Q^c=H^c\cap Q^c$ since $E_k\subset Q$ for $k\geq 3$, so that $\mathfrak{F}$ is an admissible competitor for $\overline\Gamma_{12}(s)$. 
In addition, we observe that (by symmetry of $\bar{\boldsymbol{\sigma}}$)
\begin{align*}
\mathscr{P}^{\bar{\boldsymbol{\sigma}}}_{2s}(\mathfrak{F},Q)& =\bar\sigma_{12}\Big[ \mathcal{I}_{2s}(F_1\cap Q,F_2\cap Q)+ \mathcal{I}_{2s}(F_1\cap Q,F_2\cap Q^c)+ \mathcal{I}_{2s}(F_1\cap Q^c,F_2\cap Q)\Big]\\
& = \bar\sigma_{12}\sum_{i\in K_1\,,\;j\in K_2}p_{ij}\\
&=  \bar\sigma_{12} \mathscr{L}(\mathcal{K})\,.
\end{align*}
Now we consider a maximum flow $f$ from $v_1$ to $v_2$. By the max-flow min-cut theorem, the triangle inequality \eqref{traingineqlem}, and the symmetry of $\bar{\boldsymbol{\sigma}}$, we have 
\begin{align*}
\mathscr{P}^{\bar{\boldsymbol{\sigma}}}_{2s}(\mathfrak{F},Q) & =  \bar\sigma_{12}\, \mathscr{L}(\mathcal{K}) = \bar\sigma_{12}\|f\| =\sum_{i=1}^h  \bar\sigma_{12} \,\alpha_i(f)\\
& \leq \sum_{i=1}^h\Big( \sum_{(k,l):{\rm e}_{kl}\in\gamma_i} \bar\sigma_{kl}\,\alpha_i(f)\Big) = \sum_{k\not=l} \Big( \sum_{i :{\rm e}_{kl}\in\gamma_i} \bar\sigma_{kl}\,\alpha_i(f)\Big)\\
& = \sum_{k<l} \Big(\sum_{i :{\rm e}_{kl}\in\gamma_i} \bar\sigma_{kl}\,\alpha_i(f)+\sum_{i :{\rm e}_{lk}\in\gamma_i} \bar\sigma_{lk}\,\alpha_i(f)\Big)\\
& \leq  \sum_{k<l}  \bar\sigma_{kl}\,p_{kl}= \mathscr{P}^{\bar{\boldsymbol{\sigma}}}_{2s}(\mathfrak{E},Q) \,,
\end{align*}
which completes the proof of the lemma.
\end{proof}

\section{Convergence of local minimizers}\label{sectcvlocmin}

In this final section, our goal is to prove Theorem \ref{convlocmin}. The proof is very much inspired from \cite[Proof of Theorem 3]{ADPM}, but it still contains some major differences. 
As in Section \ref{geomconstcube}, the new difficulty compare to  \cite{ADPM} is the gluing technique to match two competitors near a given interface. The gluing problem for partitions is a well known issue compare to the case $m=2$ since the use of some co-area type formula cannot be used, at least directly. 

Before starting the proof, we would like to underline the following elementary fact about local minimality for the functional $\mathscr{P}^{\boldsymbol{\sigma}}_{2s}\,$: it is a local property. 
More precisely, if a partition $\mathfrak{E}$ is a local minimizer of $\mathscr{P}^{\boldsymbol{\sigma}}_{2s}(\cdot,\Omega)$ and $\mathfrak{F}$ is a competing partition which differs from $\mathfrak{E}$ only in a compact  subset of an open set $\Omega^\prime\subset\Omega$, then 
$$\mathscr{P}^{\boldsymbol{\sigma}}_{2s}(\mathfrak{F},\Omega)-\mathscr{P}^{\boldsymbol{\sigma}}_{2s}(\mathfrak{E},\Omega)
=\mathscr{P}^{\boldsymbol{\sigma}}_{2s}(\mathfrak{F},\Omega^\prime)-\mathscr{P}^{\boldsymbol{\sigma}}_{2s}(\mathfrak{E},\Omega^\prime)\geq 0\,,$$
so that $\mathfrak{E}$ is a local minimizer of $\mathscr{P}^{\boldsymbol{\sigma}}_{2s}(\cdot,\Omega^\prime)$.

\begin{proof}[Proof of Theorem \ref{convlocmin}]
{\it Step 1.} We start with the proof of \eqref{bornsuplocmin}. Let $\Omega^\prime\Subset\Omega$. Upon considering a larger set, we may assume that $\Omega^\prime$ has a Lipschitz boundary. 
Then we consider the competitor 
$\mathfrak{F}_k=(F_{1,k},\ldots,F_{m,k})\in\mathscr{A}_m^{s_k}(\Omega)$ defined by $F_{1,k}:=E_{1,k}\cup \Omega^\prime$, and 
$F_{i,k}:=E_{i,k}\cap(\Omega^c\cup(\Omega\setminus\Omega^\prime))$ for $i\geq 2$. The local minimality of  $\mathfrak{E}_k$ yields 
\begin{multline*}
\mathscr{P}^{\boldsymbol{\sigma}}_{2s_k}(\mathfrak{E}_k,\Omega^\prime)
\leq \mathscr{P}^{\boldsymbol{\sigma}}_{2s_k}(\mathfrak{F}_k,\Omega^\prime)=\mathscr{F}_{2s_k}(\mathfrak{F}_k,\Omega^\prime)\\
=\sum_{j=2}^m\sigma_{1j}\,\mathcal{I}_{2s_k}(\Omega^\prime,F_{j,k}\cap (\Omega^\prime)^c)\leq \boldsymbol{\sigma}_{\rm max}P_{2s_k}(\Omega^\prime,\R^n)\,.
\end{multline*}
In view of e.g. \cite[Proposition 16]{ADPM} or \cite{Dav}, we have 
\begin{multline*}
\limsup_{k\to\infty}\,(1-2s_k)\mathscr{P}^{\boldsymbol{\sigma}}_{2s_k}(\mathfrak{E}_k,\Omega^\prime)
\leq  \boldsymbol{\sigma}_{\rm max} \limsup_{k\to\infty}\,(1-2s_k)P_{2s_k}(\Omega^\prime,\R^n)\\
\leq \frac{n\omega_n}{2}\boldsymbol{\sigma}_{\rm max}{\rm Per}(\Omega^\prime,\R^n)<\infty\,, 
\end{multline*}
 and \eqref{bornsuplocmin} is proved. Note that Theorem \ref{equicoercivethm} now implies that the limit $\mathfrak{E}=(E_1,\ldots,E_m)$ is a partition of $\Omega$, and a Caccioppoli partition of $\Omega^\prime$ for every open subset $\Omega^\prime\Subset\Omega$. 
\vskip5pt

\noindent{\it Step 2.} We now complete the proof of the theorem. Arguing exactly as in  \cite[Proof of Theorem~3]{ADPM}, it is enough to provide the argument in the case where $\Omega^\prime$ is a ball. The case of an open set $\Omega^\prime$ with Lipschitz boundary can be obtained through the same lines of proof, and the general case then follows  by an abstract measure theoretic argument. 

From now on, we then assume that $\Omega^\prime=B_R\Subset\Omega$ (we omit to specify its center for simplicity). 
Following \cite[Proof of Theorem~3]{ADPM}, we introduce the regular monotone set function 
$\boldsymbol{\alpha}_k:\mathcal{P}(\Omega)\to[0,\infty)$ defined for an open subset $A\subset\Omega$ by 
$$\boldsymbol{\alpha}_k(A):= (1-2s_k)\mathscr{E}^{\boldsymbol{\sigma}}_{2s_k}(\mathfrak{E}_k,A)\,,$$
and extended to an arbitrary subset $B\in\mathcal{P}(\Omega)$ by 
$$\boldsymbol{\alpha}_k(B):=\inf\{\boldsymbol{\alpha}_k(A) : A\subset\Omega \text{ open}\,,\;B\subset A \} \,.$$
By Step 1 and De Giorgi-Letta's Theorem (see e.g. \cite[Theorem 21]{ADPM}), we can extract a (not relabeled) subsequence such that $\boldsymbol{\alpha}_k$ weakly converges as $k\to\infty$ to some regular monotone set function $\boldsymbol{\alpha}$. Since  each $\boldsymbol{\alpha}_k$ is super additive, $\boldsymbol{\alpha}$ is also super additive. 

We consider a ball $B_R\Subset\Omega$  satisfying $\boldsymbol{\alpha}(\partial B_R)=0$ (note that there are at most countably  many $R$'s such that $\boldsymbol{\alpha}(\partial B_R)>0$ 
since  $\boldsymbol{\alpha}$ is locally finite and super additive). We aim to show that $\mathfrak{E}$ is local minimizer of 
$\mathscr{P}_1^{\bar{\boldsymbol{\sigma}}}(\cdot,B_R)$, and that  
$$\lim_{k\to\infty}\,(1-2s_k)\mathscr{P}^{\boldsymbol{\sigma}}_{2s_k}(\mathfrak{E}_k,B_R)=\omega_{n-1} \mathscr{P}_1^{\bar{\boldsymbol{\sigma}}}(\mathfrak{E},B_R)\,.$$
To this purpose, we consider an arbitrary Caccioppoli partition  $\mathfrak{F}=(F_1,,\ldots,F_m)$ of $B_R$  satisfying $F_i\triangle E_i\subset B_{R-\delta}$ for every $i\in\{1,\ldots,m\}$ for some $\delta>0$ small. By Theorem \ref{GCthm} (and a standard diagonal argument, see e.g. the proof of Lemma \ref{lscgamlimsup}), there exists a sequence 
$\mathfrak{F}_k=(F_{1,k},,\ldots,F_{m,k})\in\mathscr{A}_m^{s_k}(B_R)$ such that $\chi_{F_{i,k}}\to\chi_{E_i}$ in $L^1(B_R)$ for each $i\in\{1,\ldots,m\}$, and 
\begin{equation}\label{approxcompetit}
\lim_{k\to\infty}\, \,(1-2s_k)\mathscr{P}^{\boldsymbol{\sigma}}_{2s_k}(\mathfrak{F}_k,B_R)=\omega_{n-1} \mathscr{P}_1^{\bar{\boldsymbol{\sigma}}}(\mathfrak{F},B_R)\,.
\end{equation}
Next we fix a small $\varepsilon>0$ arbitrary. According to e.g. \cite[Proposition 22]{ADPM}, since $\boldsymbol{\alpha}(\partial B_R)=0$, we have 
$$0=\lim_{\bar\delta\to 0^+}\limsup_{k\to\infty}\boldsymbol{\alpha}_k(\overline B_{R+\bar\delta}\setminus B_{R-\bar\delta})= \lim_{\bar\delta\to 0^+}\limsup_{k\to\infty}\boldsymbol{\alpha}_k( B_{R+\bar\delta}\setminus  B_{R-\bar\delta})\,.$$
Therefore, whenever $\bar\delta\in(0,\delta)$ is small enough, we have  
\begin{equation}\label{goodlay1}
\limsup_{k\to\infty}\, (1-2s_k)\mathscr{E}^{\boldsymbol{\sigma}}_{2s_k}(\mathfrak{E}_k,B_{R+\bar\delta}\setminus  B_{R-\bar\delta})\leq \varepsilon\,.
\end{equation}
We then choose $\bar\delta\in(0,\delta)$ in such a way that  $\mathscr{P}_1^{\bar{\boldsymbol{\sigma}}}(\mathfrak{E},\partial B_{R-\bar\delta})=0$ (again, there are at most countably many $\bar\delta$'s such that this property fails).  
By Theorem \ref{GCthm}, \eqref{goodlay1} and the fact that $\mathfrak{F}$ coincides with $\mathfrak{E}$ outside $B_{R-\delta}$, we have 
\begin{align}\label{goodlay2}
\nonumber \varepsilon\geq \liminf_{k\to\infty}\, (1-2s_k)\mathscr{E}^{\boldsymbol{\sigma}}_{2s_k}(\mathfrak{E}_k,B_R\setminus \overline B_{R-\bar\delta})& \geq \omega_{n-1} \mathscr{P}_1^{\bar{\boldsymbol{\sigma}}}(\mathfrak{E},B_R\setminus \overline B_{R-\bar\delta})\\
\nonumber &=\omega_{n-1} \mathscr{P}_1^{\bar{\boldsymbol{\sigma}}}(\mathfrak{E},B_R\setminus  B_{R-\bar\delta})\\
&= \omega_{n-1} \mathscr{P}_1^{\bar{\boldsymbol{\sigma}}}(\mathfrak{F},B_R\setminus B_{R-\bar\delta})\,.
\end{align}
In view of \eqref{approxcompetit} and Theorem \ref{GCthm} again, by super additivity of $\mathscr{E}^{\boldsymbol{\sigma}}_{2s_k}(\mathfrak{F}_k,\cdot)$, 
\begin{align*}
\omega_{n-1} \mathscr{P}_1^{\bar{\boldsymbol{\sigma}}}(\mathfrak{F},B_R) & \geq 
\liminf_{k\to\infty} \,(1-2s_k)\mathscr{E}^{\boldsymbol{\sigma}}_{2s_k}(\mathfrak{F}_k,B_{R-\bar\delta})
+ \limsup_{k\to\infty} \,(1-2s_k)\mathscr{E}^{\boldsymbol{\sigma}}_{2s_k}(\mathfrak{F}_k, B_R\setminus B_{R-\bar\delta})\\
&\geq  \omega_{n-1} \mathscr{P}_1^{\bar{\boldsymbol{\sigma}}}(\mathfrak{F},B_{R-\bar\delta}) + \limsup_{k\to\infty} \,(1-2s_k)\mathscr{E}^{\boldsymbol{\sigma}}_{2s_k}(\mathfrak{F}_k, B_R\setminus B_{R-\bar\delta})\,.
\end{align*}
Combining \eqref{goodlay1} and \eqref{goodlay2}, we deduce that 
\begin{equation}\label{goodlay3}
\limsup_{k\to\infty} \,(1-2s_k)\mathscr{E}^{\boldsymbol{\sigma}}_{2s_k}(\mathfrak{F}_k, B_R\setminus B_{R-\bar\delta}) \leq \varepsilon\,.
\end{equation}
Since we are concerned with large $k$'s, by \eqref{goodlay1} and \eqref{goodlay3} we may assume that  
\begin{equation}\label{layerestim}
(1-2s_k)\mathscr{E}^{\boldsymbol{\sigma}}_{2s_k}(\mathfrak{E}_k,B_{R+\bar\delta}\setminus  B_{R-\bar\delta})\leq 2\varepsilon\;\text{ and }\; (1-2s_k)\mathscr{E}^{\boldsymbol{\sigma}}_{2s_k}(\mathfrak{F}_k, B_R\setminus B_{R-\bar\delta}) \leq 2  \varepsilon\,.
\end{equation}
\vskip5pt

\noindent{\it Step 3. (Gluing)} In the sequel, we  write $D_{r_1,r_2}:=B_{r_2}\setminus \overline B_{r_1}$  for $r_2> r_1>0$, and we set 
$$\delta_0:=\frac{3\bar\delta}{4}\,, \quad \delta_1:=\frac{\bar\delta}{2}\,,  \quad \delta_2:=\frac{\bar\delta}{4}\,,\quad\text{and}\quad \Omega_{\bar\delta}:=D_{R-\delta_0,R}\,.$$
We consider a smooth cut-off function $\varphi\in C^\infty(B_R)$ such that $0\leq \varphi\leq 1$, $\varphi=0$ in $D_{R-\delta_2,R}$, $\varphi=1$ in $B_{R-\delta_1}$, 
$|\nabla\varphi|\leq C/\bar\delta$, and $|\nabla^2\varphi|\leq C/\bar\delta^2$ for some universal constant $C$. Next we introduce the $\R^m$-valued functions 
$u_k:\Omega_{\bar\delta}\to\R^m$ and $v_k:\Omega_{\bar\delta}\to\R^m$ defined by 
$$u_k:=(\chi_{E_{1,k}},\ldots,\chi_{E_{m,k}}) \quad\text{and}\quad v_k:=(\chi_{F_{1,k}},\ldots,\chi_{F_{m,k}})\,,$$
and we set 
$$w_k:=(1-\varphi)u_k+\varphi v_k\,. $$
We still need to define auxiliary functions. For this, we write $({\rm e}_1,\ldots,{\rm e}_m)$ the canonical basis of $\R^m$ (i.e., ${\rm e}_i$ is the vector whose coodinates are zero except the $i$-th one which is one). For $i\in\{1,\ldots,m-1\}$, we define the function $d_{i,k}:\Omega_{\bar\delta}\to\R$ by 
$$d_{i,k}(x):={\rm dist}(w_k(x),{\rm e}_i)\,. $$
Notice that $0\leq d_{i,k}\leq \sqrt{2}$ since $w_k(x)$ is a convex combination of the ${\rm e}_j$'s. 

Following the notations in \cite{ADPM}, we set 
$$\mathcal{F}_{2s_k}(d_{i,k},\Omega_{\bar\delta}):=\iint_{\Omega_{\bar\delta}\times \Omega_{\bar\delta}}\frac{|d_{i,k}(x)-d_{i,k}(y)|}{|x-y|^{n+2s_k}}\,\de x\de y \,. $$
Since the distance function is $1$-Lipschitz, it is easy to see that $\mathcal{F}_{2s_k}(d_{i,k},\Omega_{\bar\delta})<\infty$ (see also the computations below). 
By the co-area type formula in \cite[Lemma 10]{ADPM} (more precisely, a tiny variant of it), we have  
\begin{multline*}
\mathcal{F}_{2s_k}(d_{i,k},\Omega_{\bar\delta})=2\int_0^{\sqrt{2}}\mathcal{I}_{2s_k}\big(\{d_{i,k}<t\}\cap \Omega_{\bar\delta}, \{d_{i,k}<t\}^c\cap \Omega_{\bar\delta}\big)\,\de t\\
 \geq 2\int_0^{1/2}\mathcal{I}_{2s_k}\big(\{d_{i,k}<t\}\cap \Omega_{\bar\delta}, \{d_{i,k}<t\}^c\cap \Omega_{\bar\delta}\big)\,\de t\,.
\end{multline*}
As a consequence, for each $i\in\{1,\ldots,m-1\}$, we can find some $t_i^k\in(0,1/2)$ such that for 
$$\widetilde G_{i,k}:= \{d_{i,k}<t_{i}^k\}\cap \Omega_{\bar\delta}\,,$$
we have 
\begin{equation}\label{energcuti}
\mathcal{I}_{2s_k}(\widetilde G_{i,k}\cap \Omega_{\bar\delta}, (\widetilde G_{i,k})^c\cap \Omega_{\bar\delta}) \leq \mathcal{F}_{2s_k}(d_{i,k},\Omega_{\bar\delta})\,.
\end{equation}
Since $w_k=u_k$ in $D_{R-\delta_2,R}$ and $w_k=v_k$ in $D_{R-\delta_1,R-\bar\delta}$, we have for $i\in\{1,\ldots,m-1\}$, 
$$\widetilde G_{i,k}\cap D_{R-\delta_2,R}= E_{i,k}\cap  D_{R-\delta_2,R}\quad\text{and}\quad \widetilde G_{i,k}\cap D_{R-\delta_1,R-\delta_0}= F_{i,k}\cap  D_{R-\delta_1,R-\delta_0}\,.$$
Now the elementary but important observation is that $B({\rm e}_i,1/2)\cap B({\rm e}_j,1/2)=\emptyset$ for $i\not=j$. It implies that 
$$\widetilde G_{i,k}\cap \widetilde G_{j,k}=\emptyset \quad\text{for every $i,j\in\{1,\ldots,m-1\}$ with $i\not=j$}\,.$$
We can now define
$$\widetilde G_{m,k}:= \Omega_{\bar\delta}\setminus\Big(\bigcup^{m-1}_{i=1} \widetilde G_{i,k}\Big)\,,$$
which satisfies $\widetilde G_{m,k}\cap D_{R-\delta_2,R}= E_{m,k}\cap  D_{R-\delta_2,R}$ and  $\widetilde G_{m,k}\cap D_{R-\delta_1,R-\delta_0}= F_{m,k}\cap  D_{R-\delta_1,R-\delta_0}$, and 
 $(\widetilde G_{1,k},\ldots,\widetilde G_{m,k})$ forms a partition of $ \Omega_{\bar\delta}$. In addition, we easily infer from \eqref{energcuti} that 
\begin{equation}\label{estiperlastcomp}
\mathcal{I}_{2s_k}(\widetilde G_{m,k}\cap \Omega_{\bar\delta}, (\widetilde G_{m,k})^c\cap \Omega_{\bar\delta}) \leq \sum_{i=1}^{m-1} \mathcal{F}_{2s_k}(d_{i,k},\Omega_{\bar\delta})\,.
\end{equation}
It allow us to define a partition $\mathfrak{G}_k=(G_{1,k},\ldots,G_{m,k})\in\mathscr{A}^{s_k}_m(B_R)$ by setting 
$$G_{i,k}\cap \Omega_{\bar\delta}:= \widetilde G_{i,k}\quad\text{and}\quad G_{i,k}\cap B_{R-\delta_0}:=F_{i,k}\cap B_{R-\delta_0}\quad\text{for $i=1,\ldots,m$}\,.$$
By construction, $\mathfrak{G}_k$ satisfies $G_{i,k}\triangle E_{i,k}\subset B_{R-\delta_2}\Subset B_R$ for every $i\in\{1,\ldots,m\}$, so that the local minimality of $\mathfrak{E}_k$ yields 
\begin{equation}\label{ineqminloc}
\mathscr{P}^{\boldsymbol{\sigma}}_{2s_k}(\mathfrak{E}_k,B_R)\leq \mathscr{P}^{\boldsymbol{\sigma}}_{2s_k}(\mathfrak{G}_k,B_R)\,.
\end{equation}
\vskip5pt

\noindent{\it Step 3. (Estimates)} We are now going to estime $ \mathscr{P}^{\boldsymbol{\sigma}}_{2s_k}(\mathfrak{G}_k,B_R)$ from above. First we write 
\begin{equation}\label{estilocmin1}
\mathscr{P}^{\boldsymbol{\sigma}}_{2s_k}(\mathfrak{G}_k,B_R)= \mathscr{E}^{\boldsymbol{\sigma}}_{2s_k}(\mathfrak{G}_k,B_R)+ \mathscr{F}^{\boldsymbol{\sigma}}_{2s_k}(\mathfrak{G}_k,B_R)\,.
\end{equation}
For the first term, we have
\begin{equation}\label{estilocmin2}
\mathscr{E}^{\boldsymbol{\sigma}}_{2s_k}(\mathfrak{G}_k,B_R)=\mathscr{E}^{\boldsymbol{\sigma}}_{2s_k}(\mathfrak{G}_k,\Omega_{\bar\delta}) +\mathscr{E}^{\boldsymbol{\sigma}}_{2s_k}(\mathfrak{F}_k,B_{R-\delta_0}) +\mathscr{R}_k\,,
\end{equation}
with 
$$ \mathscr{R}_k:=\frac{1}{2}\sum_{i=1}^m\bar\sigma_{ij}\big[\mathcal{I}_{2s_k}(G_{i,k}\cap\Omega_{\bar\delta},F_{j,k}\cap B_{R-\delta_0})+ \mathcal{I}_{2s_k}(F_{i,k}\cap B_{R-\delta_0},G_{j,k}\cap \Omega_{\bar\delta} )\big]\,.$$
Using that $G_{i,k}\cap A_{R-\delta_1,R-\delta_0}=F_{i,k}\cap A_{R-\delta_1,R-\delta_0}$, we obtain  from \eqref{layerestim}, 
\begin{equation}\label{estiRklocmin}
 \mathscr{R}_k\leq  \mathscr{E}^{\boldsymbol{\sigma}}_{2s_k}(\mathfrak{F}_k,A_{R-\delta_1,R-\bar\delta})+\frac{C\boldsymbol{\sigma}_{\rm max}}{(\bar\delta)^{n+2s_k}}\leq 
\frac{2\varepsilon}{(1-2s_k)} +\frac{C\boldsymbol{\sigma}_{\rm max}}{(\bar\delta)^{n+2s_k}}\,,
\end{equation}
for a constant $C$ depending only on $n$. 

On the other hand, recalling that $G_{i,k}\cap A_{R-\delta_2,R}=E_{i,k}\cap A_{R-\delta_2,R}$, 
\begin{multline*}
\mathscr{F}^{\boldsymbol{\sigma}}_{2s_k}(\mathfrak{G}_k,B_R)\leq \frac{1}{2}\sum_{i=1}^m\bar\sigma_{ij}\big[\mathcal{I}_{2s_k}(E_{i,k}\cap A_{R-\delta_2,R},E_{j,k}\cap A_{R,R+\bar\delta})\\
+ \mathcal{I}_{2s_k}(E_{i,k}\cap  A_{R,R+\bar\delta}) , A_{R-\delta_2,R})\big] +\frac{C\boldsymbol{\sigma}_{\rm max}}{(\bar\delta)^{n+2s_k}}\,,
\end{multline*}
so that \eqref{layerestim} yields again 
\begin{equation}\label{estilocmin3}
\mathscr{F}^{\boldsymbol{\sigma}}_{2s_k}(\mathfrak{G}_k,B_R)\leq  \mathscr{E}^{\boldsymbol{\sigma}}_{2s_k}(\mathfrak{E}_k,A_{R-\delta_2,R+\bar\delta})+\frac{C\boldsymbol{\sigma}_{\rm max}}{(\bar\delta)^{n+2s_k}}\leq \frac{2\varepsilon}{(1-2s_k)} +\frac{C\boldsymbol{\sigma}_{\rm max}}{(\bar\delta)^{n+2s_k}}\,.
\end{equation}
Combining \eqref{estilocmin1}, \eqref{estilocmin2}, \eqref{estiRklocmin} and \eqref{estilocmin3}, we are led to 
\begin{equation}\label{quasidoneprev}
\mathscr{P}^{\boldsymbol{\sigma}}_{2s_k}(\mathfrak{G}_k,B_R)\leq \mathscr{E}^{\boldsymbol{\sigma}}_{2s_k}(\mathfrak{G}_k,\Omega_{\bar\delta}) +\mathscr{E}^{\boldsymbol{\sigma}}_{2s_k}(\mathfrak{F}_k,B_{R-\delta_0})+ \frac{4\varepsilon}{(1-2s_k)} +\frac{C\boldsymbol{\sigma}_{\rm max}}{(\bar\delta)^{n+2s_k}}\,,
\end{equation}
and it only remains to estimate $ \mathscr{E}^{\boldsymbol{\sigma}}_{2s_k}(\mathfrak{G}_k,\Omega_{\bar\delta})$. 
\vskip5pt

We first infer from \eqref{energcuti} and \eqref{estiperlastcomp} that
 \begin{equation}\label{estilocmin4}
 \mathscr{E}^{\boldsymbol{\sigma}}_{2s_k}(\mathfrak{G}_k,\Omega_{\bar\delta})\leq \boldsymbol{\sigma}_{\rm max}\sum_{i=1}^m\mathcal{I}_{2s_k}(\widetilde G_{i,k}\cap \Omega_{\bar\delta}, (\widetilde G_{i,k})^c\cap \Omega_{\bar\delta})
\leq 2 \boldsymbol{\sigma}_{\rm max}\sum_{i=1}^{m-1} \mathcal{F}_{2s_k}(d_{i,k},\Omega_{\bar\delta})\,.
\end{equation}
Since the distance fonction is $1$-Lipschitz, we have for each $i\in\{1,\ldots,m-1\}$, 
\begin{equation}\label{deferrWk}
\mathcal{F}_{2s_k}(d_{i,k},\Omega_{\bar\delta})\leq  \iint_{\Omega_{\bar\delta}\times \Omega_{\bar\delta}}\frac{|w_{k}(x)-w_{k}(y)|}{|x-y|^{n+2s_k}}\,\de x\de y=:\mathscr{W}_k\,.
\end{equation}
To estimate $\mathscr{W}_k$, we write 
\begin{multline*}
w_{k}(x)-w_{k}(y)=(1-\varphi(x))(u_k(x)-u_k(y))+\varphi(x)(v_k(x)-v_k(y))\\ +(\varphi(x)-\varphi(y))(v(y)-u(y))\,,
\end{multline*}
so that 
$$|w_{k}(x)-w_{k}(y)|\leq |u_k(x)-u_k(y)|+|v_k(x)-v_k(y)|+\frac{C}{\bar\delta}|v(y)-u(y)|\,|x-y|\,,$$
and consequently,
\begin{multline}\label{estierrWk}
\mathscr{W}_k\leq  \iint_{\Omega_{\bar\delta}\times \Omega_{\bar\delta}}\frac{|u_{k}(x)-u_{k}(y)|}{|x-y|^{n+2s_k}}\,\de x\de y
+ \iint_{\Omega_{\bar\delta}\times \Omega_{\bar\delta}}\frac{|v_{k}(x)-v_{k}(y)|}{|x-y|^{n+2s_k}}\,\de x\de y\\
+\frac{C}{\bar\delta}\iint_{\Omega_{\bar\delta}\times \Omega_{\bar\delta}}\frac{|v_{k}(y)-u_{k}(y)|}{|x-y|^{n-1+2s_k}}\,\de x\de y\,.
\end{multline}
We estimate the last term above as follows, 
\begin{align}
\nonumber\frac{1}{\bar\delta}\iint_{\Omega_{\bar\delta}\times \Omega_{\bar\delta}}\frac{|v_{k}(y)-u_{k}(y)|}{|x-y|^{n-1+2s_k}}\,\de x\de y
\nonumber&\leq \frac{1}{\bar\delta}\int_{\Omega_{\bar\delta}}|v_{k}(y)-u_{k}(y)| \Big(\int_{B_{\bar \delta}(y)}\frac{\de x}{|x-y|^{n-1+2s_k}}\Big)\,\de y+\frac{C}{(\bar\delta)^{n+2s_k}}\\
\nonumber &\leq \frac{n\omega_n}{(1-2s_k)(\bar\delta)^{2s_k}}\|v_k-u_k\|_{L^1(\Omega_{\bar\delta})} +\frac{C}{(\bar\delta)^{n+2s_k}}\\
\label{estierrWk1} &\leq \frac{n\omega_n}{(1-2s_k)(\bar\delta)^{2s_k}}\sum_{j=1}^m\|\chi_{F_{j,k}}-\chi_{E_{j,k}}\|_{L^1(\Omega_{\bar\delta})} +\frac{C}{(\bar\delta)^{n+2s_k}}\,,
\end{align}
for a constant $C$ depending only on $n$. 

Recalling \eqref{layerestim}, we have
\begin{align}
\nonumber \iint_{\Omega_{\bar\delta}\times \Omega_{\bar\delta}}\frac{|u_{k}(x)-u_{k}(y)|}{|x-y|^{n+2s_k}}\,\de x\de y
\nonumber &\leq \sum_{j=1}^m\iint_{\Omega_{\bar\delta}\times \Omega_{\bar\delta}}\frac{|\chi_{E_{j,k}}(x)-\chi_{E_{j,k}}(y)|}{|x-y|^{n+2s_k}}\,\de x\de y\\
\nonumber  &\leq 2 \sum_{j=1}^m\mathcal{I}_{2s_k}(E_{j,k}\cap \Omega_{\bar\delta}, E^c_{j,k}\cap \Omega_{\bar\delta})\\
\label{estierrWk2} &\leq \frac{2}{\boldsymbol{\sigma}_{\rm min}} \mathscr{E}^{\bar{\boldsymbol{\sigma}}}_{2s_k}(\mathfrak{E}_k,\Omega_{\bar\delta})\leq  \frac{4\varepsilon}{\boldsymbol{\sigma}_{\rm min}(1-2s_k)}\,,
\end{align}
and similarly 
\begin{equation}\label{estierrWk3}
 \iint_{\Omega_{\bar\delta}\times \Omega_{\bar\delta}}\frac{|v_{k}(x)-v_{k}(y)|}{|x-y|^{n+2s_k}}\,\de x\de y 
\leq \frac{2}{\boldsymbol{\sigma}_{\rm min}} \mathscr{E}^{\bar{\boldsymbol{\sigma}}}_{2s_k}(\mathfrak{F}_k,\Omega_{\bar\delta})\leq  \frac{4\varepsilon}{\boldsymbol{\sigma}_{\rm min}(1-2s_k)}\,.
\end{equation}
Gathering \eqref{estilocmin4}, \eqref{deferrWk}, \eqref{estierrWk},  \eqref{estierrWk1},  \eqref{estierrWk2} and  \eqref{estierrWk3}, we conclude that 
\begin{equation}\label{quasidone}
 \mathscr{E}^{\boldsymbol{\sigma}}_{2s_k}(\mathfrak{G}_k,\Omega_{\bar\delta})
\leq \frac{Cm\boldsymbol{\sigma}_{\rm max}}{(1-2s_k)(\bar\delta)^{2s_k}}\sum_{j=1}^m\|\chi_{F_{j,k}}-\chi_{E_{j,k}}\|_{L^1(\Omega_{\bar\delta})}+\frac{16m\boldsymbol{\sigma}_{\rm max}\varepsilon}{\boldsymbol{\sigma}_{\rm min}(1-2s_k)}+\frac{Cm\boldsymbol{\sigma}_{\rm max}}{(\bar\delta)^{n+2s_k}}\,,
\end{equation}
for a constant $C$ depending only on $n$. 
\vskip5pt

Finally, combining \eqref{quasidoneprev} with \eqref{quasidone} and observing that $\mathscr{E}^{\boldsymbol{\sigma}}_{2s_k}(\mathfrak{F}_k,B_{R-\delta_0})\leq \mathscr{P}^{\boldsymbol{\sigma}}_{2s_k}(\mathfrak{F}_k,B_R)$, we have reached our last estimate
\begin{multline}\label{optupbdcompet}
(1-2s_k)\mathscr{P}^{\boldsymbol{\sigma}}_{2s_k}(\mathfrak{G}_k,B_R)\leq (1-2s_k)\mathscr{P}^{\boldsymbol{\sigma}}_{2s_k}(\mathfrak{F}_k,B_{R})\\ 
+ \frac{Cm\boldsymbol{\sigma}_{\rm max}}{(\bar\delta)^{2s_k}}\sum_{j=1}^m\|\chi_{F_{j,k}}-\chi_{E_{j,k}}\|_{L^1(\Omega_{\bar\delta})}+\frac{20m\boldsymbol{\sigma}_{\rm max}\varepsilon}{\boldsymbol{\sigma}_{\rm min}}+\frac{Cm\boldsymbol{\sigma}_{\rm max}(1-2s_k)}{(\bar\delta)^{n+2s_k}}\,,
\end{multline} 
 for a constant $C$ depending only on $n$. 
 \vskip5pt
 
 \noindent{\it Step 4. (Conclusion)} Now we recall that $\chi_{F_{j,k}}\to \chi_{F_{j}}$ in $L^1(\Omega)$ for every $j\in\{1,\ldots,m\}$, and by our choice of $\bar\delta$, we have 
 $F_{j}\cap\Omega_{\bar\delta}=E_{j}\cap\Omega_{\bar\delta}$. On the other hand, $\chi_{E_{j,k}}\to \chi_{E_j}$ in $L^1(\Omega)$ for every $j\in\{1,\ldots,m\}$, and therefore 
 \begin{equation}\label{convl1competit}
 \sum_{j=1}^m\|\chi_{F_{j,k}}-\chi_{E_{j,k}}\|_{L^1(\Omega_{\bar\delta})}\mathop{\longrightarrow}\limits_{k\to\infty} 0\,.
 \end{equation}
 Using Theorem \ref{GCthm}, \eqref{approxcompetit}, \eqref{ineqminloc}, \eqref{optupbdcompet}, and \eqref{convl1competit}, we deduce that 
 \begin{multline*}
 \omega_{n-1}\mathscr{P}_1^{\bar{\boldsymbol{\sigma}}}(\mathfrak{E},B_R)\leq \liminf_{k\to\infty}\, (1-2s_k)\mathscr{P}^{\boldsymbol{\sigma}}_{2s_k}(\mathfrak{E}_k,B_R)\leq
  \limsup_{k\to\infty}\, (1-2s_k)\mathscr{P}^{\boldsymbol{\sigma}}_{2s_k}(\mathfrak{E}_k,B_R) \\
 \leq \limsup_{k\to\infty}\, (1-2s_k)\mathscr{P}^{\boldsymbol{\sigma}}_{2s_k}(\mathfrak{G}_k,B_R)\leq \omega_{n-1}\mathscr{P}_1^{\bar{\boldsymbol{\sigma}}}(\mathfrak{F},B_R)
 +\frac{20m\boldsymbol{\sigma}_{\rm max}\varepsilon}{\boldsymbol{\sigma}_{\rm min}}\,.
 \end{multline*}
 In view of the arbitrariness of $\varepsilon$, we conclude that $\mathscr{P}_1^{\bar{\boldsymbol{\sigma}}}(\mathfrak{E},B_R)\leq \mathscr{P}_1^{\bar{\boldsymbol{\sigma}}}(\mathfrak{F},B_R)$,   
 and thus $\mathfrak{E}$ is a local minimizer of  $\mathscr{P}_1^{\bar{\boldsymbol{\sigma}}}(\cdot,B_R)$. In addition, choosing $\mathfrak{F}=\mathfrak{E}$ in the chain of inequalities above 
 and letting $\varepsilon\to 0$ shows that 
 $$ \lim_{k\to\infty}\, (1-2s_k)\mathscr{P}^{\boldsymbol{\sigma}}_{2s_k}(\mathfrak{E}_k,B_R) = \omega_{n-1}\mathscr{P}_1^{\bar{\boldsymbol{\sigma}}}(\mathfrak{E},B_R)\,.$$
In particular, $\boldsymbol{\alpha}(B_R)= \omega_{n-1}\mathscr{P}_1^{\bar{\boldsymbol{\sigma}}}(\mathfrak{E},B_R)$. 
\vskip3pt

We may now complete the proof exactly as in \cite[Proof of Theorem 3]{ADPM} noticing that the argument  can be reproduced for any open set $\Omega^\prime\Subset\Omega$ 
with Lipschitz boundary and satisfying $\boldsymbol{\alpha}(\partial\Omega^\prime)=0$ (instead of a ball $B_R$), and concluding by regularity (of the monotone set functions involved) that  $\boldsymbol{\alpha}=\omega_{n-1}\mathscr{P}_1^{\bar{\boldsymbol{\sigma}}}(\mathfrak{E},\cdot)$. 
 \end{proof}

\vskip20pt


\end{document}